        \title{Survey on the Farrell-Jones Conjecture}
        \author{L\"uck, Wolfgang}
        \address{Mathematicians Institut der Universit\"at Bonn\\
                Endenicher Allee 60\\
                53115 Bonn, Germany}
         \email{wolfgang.lueck@him.uni-bonn.de}
          \urladdr{http://www.him.uni-bonn.de/lueck}
         \date{July, 2025}
     \keywords{algebraic K- and L-theory of group rings, Farrell-Jones Conjecture}
     \subjclass[2010]{primary: 19-02 secondary: 19A31, 19B28, 19G24 }
\DeclareMathAlphabet\EuR{U}{eur}{m}{n}
\SetMathAlphabet\EuR{bold}{U}{eur}{b}{n}
\newcommand{\calfj}{{\mathcal F}\!{\mathcal J}}
\newcommand{\calvcyc}{{\mathcal V}{\mathcal C}{\mathcal Y}}
\newcommand{\cala}{{\mathcal A}}
\newcommand{\calc}{{\mathcal C}}
\newcommand{\calf}{{\mathcal F}}
\newcommand{\calh}{{\mathcal H}}
\newcommand{\calp}{{\mathcal P}}
\newcommand{\cals}{{\mathcal S}}
\newcommand{\calu}{{\mathcal U}}
\newcommand{\IH}{{\mathbb H}}
\newcommand{\IP}{{\mathbb P}} 
\newcommand{\IQ}{{\mathbb Q}} 
\newcommand{\IR}{{\mathbb R}}
\newcommand{\IZ}{{\mathbb Z}}
\newcommand{\bfE}{{\mathbf E}}
\newcommand{\bff}{{\mathbf f}}
\newcommand{\bfi}{{\mathbf i}}
\newcommand{\bfK}{{\mathbf K}}  
\newcommand{\bfL}{{\mathbf L}}
\newcommand{\bfp}{{\mathbf p}}
\DeclareMathAlphabet{\matheurm}{U}{eur}{m}{n}
\newcommand{\bfWh}{{\mathbf W}{\mathbf h}}
\newcommand{\curs}{\EuR}
\newcommand{\Or}{\curs{Or}}
\newcommand{\SPECTRA}{\curs{SPECTRA}}
\newcommand{\ANR}{\operatorname{ANR}}
\newcommand{\asmb}{\operatorname{asmb}}
\newcommand{\aut}{\operatorname{aut}}
\newcommand{\BTop}{\operatorname{BTop}}
\newcommand{\CAT}{\operatorname{CAT}}
\newcommand{\zentrum}{\operatorname{center}}
\newcommand{\cd}{\operatorname{cd}}
\newcommand{\coker}{\operatorname{coker}}
\newcommand{\colim}{\operatorname{colim}}
\newcommand{\Diff}{\operatorname{Diff}}
\newcommand{\ev}{\operatorname{ev}}
\newcommand{\FP}{\operatorname{FP}}
\newcommand{\GL}{\operatorname{GL}}
\newcommand{\hocolim}{\operatorname{hocolim}}
\newcommand{\id}{\operatorname{id}}
\newcommand{\im}{\operatorname{im}}
\newcommand{\NK}{{N\!K}}
\newcommand{\Nil}{\operatorname{Nil}}
\newcommand{\Out}{\operatorname{Out}}
\newcommand{\PL}{\operatorname{PL}}
\newcommand{\pr}{\operatorname{pr}}
\newcommand{\supp}{\operatorname{supp}}
\newcommand{\Top}{\operatorname{Top}}
\newcommand{\TOP}{\operatorname{TOP}}
\newcommand{\Wh}{\operatorname{Wh}}
\newcommand{\pt}{\{\bullet\}}
     \newcounter{commentcounter}
     \theoremstyle{plain} \newtheorem{theorem}{Theorem}[section]
      \newtheorem{lemma}[theorem]{Lemma}
     \newtheorem{proposition}[theorem]{Proposition}
     \newtheorem{conjecture}[theorem]{Conjecture}
      \newtheorem*{theorem*}{Theorem}
     \newtheorem*{theoremA*}{Theorem A} \newtheorem*{theoremB*}{Theorem B}
     \theoremstyle{definition} \newtheorem{definition}[theorem]{Definition}
      \newtheorem{example}[theorem]{Example}
     \newtheorem{question}[theorem]{Question} 
      \newtheorem{remark}[theorem]{Remark}
     \newtheorem*{definition*}{Definition}
     \theoremstyle{remark}
\let\c@equation=\c@theorem\makeatother
\newcommand{\EGF}[2]{E_{#2}(#1)}                   %classifying space of a family
\newcommand{\OrGF}[2]{\Or_{#2}(#1)}                %orbit category
\newcommand{\higherlim}[3]{{\setbox1=\hbox{\rm lim}
        \setbox2=\hbox to \wd1{\leftarrowfill} \ht2=0pt \dp2=-1pt
        \mathop{\vtop{\baselineskip=5pt\box1\box2}}
        _{#1}}^{#2}#3}
\newcommand{\version}[1]                       %marks the date of last editing and compilation
{\begin{center} last edited on #1\\
last compiled on \today \ at \DTMcurrenttime.\\
name of texfile: \jobname
\end{center}
}
\begin{document}

     \begin{abstract}
       This is a survey on the Farrell-Jones Conjecture about the algebraic $K$- and
       $L$-theory of groups rings and its applications to algebra, geometry, group theory, and topology.
     \end{abstract}

     \maketitle

%%%%%%%%%%%%%%%%%%%%%%%%%%%%%%%%%%%%%%%%%%%%%%%%%%%%%%%%%%%%%%%%%%%%
%%%%%%%%%%%%%%%%%%%%%%%%%% Introduction %%%%%%%%%%%%%%%%%%%%%%%%%%%%%%%%
%%%%%%%%%%%%%%%%%%%%%%%%%%%%%%%%%%%%%%%%%%%%%%%%%%%%%%%%%%%%%%%%%%%%

  \typeout{------------------- Introduction -----------------}
  \section{Introduction}\label{sec:introduction}

  The Farrell-Jones Conjecture is devoted to the algebraic $K$- and $L$-theory of the
  group ring $RG$ of a group $G$ and a ring $R$. Certainly 
  $K$- and $L$-theory are rather sophisticated theories. Group
  rings are very difficult rings, for instance, they are in general not commutative,  Noetherian,
  or regular, and may have zero-divisors. So studying the algebraic
  $K$-theory and $L$-theory of group rings is hard and seems to be on the first glance a
  very special problem. So why should one care?

  The answer to this question is that information about the $K$- or $L$-theory of group
  rings has many striking applications to algebra, geometry, group theory, topology, and
  operator algebras and that meanwhile the Farrell-Jones Conjecture is known for a large
  class of groups. In the sequel we call a group a \emph{Farrell-Jones group} if it
  satisfies the so called \emph{Full Farrell-Jones Conjecture} which does imply all the
  variants of the Farrell-Jones Conjecture appearing in the literature. The class $\calfj$
  of Farrell-Jones groups contains many groups which ``occur in daily life'', e.g.,
  hyperbolic groups, finite dimensional CAT(0)-groups, lattices, fundamental groups of
  manifolds of dimension $\le 3$, and $S$-arithmetic groups. Moreover, it has very
  interesting inheritance properties, for instance it is closed under passing to subgroups
  and to directed colimits (with arbitrary structure maps).  In particular it contains all
  directed colimits (with arbitrary structure maps) of hyperbolic groups and hence a lot
  of groups with exotics properties, e.g., groups with expanders.

  There are many open conjectures in algebra, geometry, group theory, and topology which
  do follow from the Full Farrell-Jones Conjecture but for which the class for which they
  were proved by other methods is much smaller than the class $\calfj$. Here is a
  (incomplete) list of such conjectures or applications which we will discuss or for which
  we will give references for a discussion:

  \begin{itemize}
  \item Vanishing of projective class groups of integral group rings and of the Whitehead
    group for torsionfree groups;

   \item Classification of closed manifolds;

  \item Conjectures due to Bass, Borel, Novikov, and Serre;

  \item Poincar\'e duality groups;

  \item Automorphism groups of aspherical closed manifolds;

  \item Hyperbolic groups with spheres as boundary;

  \item The stable version of the Cannon Conjecture.

  \end{itemize}
  
  So for the reader the applications are more accessible and interesting than to figure
  out the complicated formulation of the Full-Farrell-Jones Conjecture itself. Therefore the reader
  may concentrate on
  Section~\ref{sec:The_projective_class_group}, Section~\ref{sec:The_Whitehead_group},~%
  Section~\ref{sec:Further_applications_of_the_farrell-Jones_Conjecture__for_torsionfree_groups},~%
    Subsection~\ref{subsec:Farrell-Jones-groups}, Section~\ref{sec:Further_applications_of_the_Full_Farrell-Jones_Conjecture},
  and Section~\ref{sec:Can_the_Full_Farrell-Jones_Conjecture_be_true_for_all_groups?}.
     
    The original formulation of the Farrell-Jones Conjecture with rings as coefficients
    appears in~\cite[1.6 on page~257]{Farrell-Jones(1993a)}.

    This article can be viewed as an extract of the book~\cite{Lueck(2022book)}, where much
    more information and references to the literature about the Farrell-Jones Conjecture
    are given.

%-----------------------------------------------------------------------------

\subsection{Acknowledgments}\label{subsec:Acknowledgements}

The paper is  funded  by the Deutsche
Forschungsgemeinschaft (DFG, German Research Foundation) under Germany's Excellence
Strategy \--- GZ 2047/1, Projekt-ID 390685813, Hausdorff Center for Mathematics at Bonn.

The author  thanks the referees for their useful comments.

The paper is organized as follows:
\tableofcontents

%%%%%%%%%%%%%%%%%%%%%%%%%%%%%%%%%%%%%%%%%%%%%%%%%%%%%%%%%%%%%%%%%%%%%
%%%%%%%%%%%%%%%%%%% Section 2: projective class group %%%%%%%%%%%%%%%%%%%%%%%%%%
%%%%%%%%%%%%%%%%%%%%%%%%%%%%%%%%%%%%%%%%%%%%%%%%%%%%%%%%%%%%%%%%%%%%%

\typeout{------------------ Section 2:  Projective class group   -----------------------}

\section{The projective class group}%
\label{sec:The_projective_class_group}

%-----------------------------------------------------------------------------

\subsection{The definition  of the projective class group}%
\label{subsec:The_definition_of_the_projective_class_group}

The \emph{projective class group $K_0(S)$ of a ring $S$} is the abelian group which is
obtained from the Grothendieck construction applied to the abelian semigroup of isomorphism
classes of finitely generated projective $S$-modules under direct sum. Equivalently, it
can be described as the abelian group whose generators are isomorphism classes $[P]$ of
finitely generated projective $S$-modules $P$ and for every exact sequence
$0 \to P_0 \to P_1 \to P_2 \to 0$ of finitely generated projective $S$-modules we require
the relation $[P_1] = [P_0] + [P_2]$.

The second definition yields the following universal property of the projective class
group. Note  that any finitely generated projective $S$-module $P$ defines an element $[P]$ in
$K_0(S)$.  An \emph{additive invariant} $(A,a)$ for the category of finitely generated projective
$S$-modules consists of an abelian group $A$ and an assignment which associates to every
finitely generated projective $S$-module $P$ an element $a(P) \in A$ such that for every
exact sequence $0 \to P_0 \to P_1 \to P_2 \to 0$ of finitely generated projective
$S$-modules we have $a(P_1) = a(P_0) + a(P_2)$. Then $K_0(S)$ together with the assignment
$P \mapsto [P]$ is the \emph{universal additive invariant} for the category of finitely generated
projective $S$-modules in the sense that for any additive invariant $(A,a)$ there is
precisely one homomorphism of abelian groups $\Phi \colon K_0(S) \to A$  such that
$\Phi([P]) = a(P)$ holds for any finitely generated projective $S$-module $P$.

The \emph{reduced projective class group $\widetilde{K}_0(S)$ of a ring $S$} is obtained
from $K_0(S)$ by dividing out the subgroup generated by all finitely generated free
$S$-modules.  Any finitely generated projective $S$-module $P$ defines an element $[P]$ in
$K_0(S)$ and hence also a class $[P]$ in $\widetilde{K}_0(S)$. The decisive property of
$\widetilde{K}_0(S)$ is that $[P] = 0$ holds in $\widetilde{K}_0(S)$ if and only if $P$ is
\emph{stably free}, i.e., there are natural numbers $m$ and $n$ satisfying
$P \oplus S^m \cong_S S^n$. So roughly speaking, $[P] \in \widetilde{K}_0(S)$ measures the
deviation of a finitely generated projective $S$-module $P$ from being stably free.

%-----------------------------------------------------------------------------

\subsection{Wall's finiteness obstruction}%
\label{subsec:Wall's_finiteness_obstruction}

A $CW$-complex $X$ is called \emph{finitely dominated} if there is a finite $CW$-complex
$Y$ and maps $i \colon X \to Y$ and $r \colon Y \to X$ such that $r \circ i$ is homotopic
to $\id_X$. Often one can construct a finitely dominated $CW$-complex with interesting
properties but needs to know whether it is homotopy equivalent to a finite
$CW$-complex. This problem is decided by the \emph{finiteness obstruction} due to  Wall. A
finitely dominated connected $CW$-complex $X$ with fundamental group $\pi$ determines an
element $\widetilde{o}(X) \in \widetilde{K}_0(\IZ \pi)$, which vanishes if and only if $X$
is homotopy equivalent to a finite $CW$-complex.  So it is interesting to know whether
$\widetilde{K}_0(\IZ \pi)$ vanishes because then $\widetilde{o}(X)$ is automatically
trivial.

The question whether a finitely dominated $CW$-complex is homotopy equivalent to a finite
$CW$-complex appears naturally in the construction of closed manifolds with certain
properties, since a closed manifold is homotopy equivalent to a finite $CW$-complex, and
one may be able to construct  a finitely dominated $CW$-complex as a first approximation
up to homotopy.  The \emph{Spherical Space
  Form Problem} is a prominent example. It aims at
the classification of closed manifolds whose universal coverings are diffeomorphic or
homeomorphic to the standard sphere.

One can actually show for a finitely presented group $G$ that
$\widetilde{K}_0(\IZ G)$ vanishes if and only if every finitely dominated connected
$CW$-complex with fundamental group isomorphic to $G$ is homotopy equivalent to a finite
$CW$-complex. So we have an algebraic assertion and a topological assertion for any finitely presented group
$G$ which turn out to be equivalent.

%-----------------------------------------------------------------------------

\subsection{Conjectures about the projective class group for  torsionfree groups}%
\label{subsec:Conjectures_about_the_projective_class_group_for_torsionfree_groups}

\begin{conjecture}\label{con:K_0(ZG)_vanishes_for_torsionfree_G}
  The reduced projective class group  $\widetilde{K}_0(\IZ G)$ vanishes if $G$ is torsionfree.
\end{conjecture}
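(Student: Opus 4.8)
The plan is to deduce the vanishing of $\widetilde{K}_0(\IZ G)$ from the Farrell-Jones Conjecture for algebraic $K$-theory, which predicts that the assembly map $H^G_n\bigl(\edub{G};\mathbf{K}_{\IZ}\bigr) \to H^G_n\bigl(\pt;\mathbf{K}_{\IZ}\bigr) = K_n(\IZ G)$ is an isomorphism for all $n \in \IZ$, where $\mathbf{K}_{\IZ}$ denotes the nonconnective algebraic $K$-theory spectrum functor and $\edub{G}$ is the classifying space for the family $\calvcyc$ of virtually cyclic subgroups. Granting this input, everything else is elementary homological algebra carried out in degree $n = 0$; the topological reformulation via Wall's finiteness obstruction recorded above is \emph{equivalent} to a purely topological statement about homotopy-finiteness of finitely dominated complexes, but that equivalence offers no independent route to a proof.

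First I would simplify the source of the assembly map. Since $G$ is torsionfree, its only finite subgroup is the trivial one, and its only nontrivial virtually cyclic subgroups are infinite cyclic, as no infinite dihedral subgroups can occur; hence $\eub{G} \simeq EG$ and $\bub{G} \simeq BG$. I would then compare the families $\calvcyc$ and $\calfin = \{1\}$: the relative term is built out of the Bass-Heller-Swan Nil groups $\NK_n(\IZ)$ of the infinite cyclic subgroups, and these vanish for all $n$ because $\IZ$ is a regular ring. Hence the conjecture yields that the composite $H_n(BG;\mathbf{K}_{\IZ}) = H^G_n(EG;\mathbf{K}_{\IZ}) \longrightarrow K_n(\IZ G)$ is an isomorphism for every $n$.

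It remains to compute $H_0(BG;\mathbf{K}_{\IZ})$, which I would do with the Atiyah-Hirzebruch spectral sequence $E^2_{p,q} = H_p\bigl(BG;K_q(\IZ)\bigr) \Rightarrow H_{p+q}(BG;\mathbf{K}_{\IZ})$. Using once more that $\IZ$ is regular, the negative $K$-groups $K_q(\IZ)$ vanish for $q < 0$, so on the line $p+q=0$ only the corner $(p,q)=(0,0)$ survives; since it admits no nonzero differentials in or out we obtain $H_0(BG;\mathbf{K}_{\IZ}) = H_0(BG;K_0(\IZ)) = K_0(\IZ) = \IZ$. Tracing through the identifications, this copy of $\IZ$ is precisely the image of the change-of-rings map $K_0(\IZ) \to K_0(\IZ G)$, i.e.\ the subgroup generated by the class of the free module $[\IZ G]$. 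Dividing out the finitely generated free modules therefore gives $\widetilde{K}_0(\IZ G) = \coker\bigl(K_0(\IZ) \to K_0(\IZ G)\bigr) = 0$.

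The one genuinely hard step is the input: the Farrell-Jones Conjecture is open in general, so the argument above only settles Conjecture~\ref{con:K_0(ZG)_vanishes_for_torsionfree_G} for the (admittedly large) class of Farrell-Jones groups. Verifying the Farrell-Jones Conjecture for concrete classes of groups \--- hyperbolic groups, finite-dimensional CAT(0)-groups, lattices, and so on \--- is carried out by entirely different techniques, namely controlled algebra together with a careful study of the dynamics of associated flow spaces, and that is where the real difficulty lies. For some special torsionfree groups one can circumvent the full conjecture: for free groups or poly-$\IZ$ groups the vanishing follows directly from iterated Bass-Heller-Swan decompositions combined with $\widetilde{K}_0(\IZ) = 0$ and $\NK_n(\IZ) = 0$.
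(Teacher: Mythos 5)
This statement is a conjecture, and the paper proves nothing here unconditionally; the only relevant argument in the paper is the remark after Conjecture~\ref{con:FJC_torsionfree_regular_higher_K-theory}, where the implication from the Farrell-Jones Conjecture to Conjecture~\ref{con:K_0(ZG)_vanishes_for_torsionfree_G} is deduced via the Atiyah-Hirzebruch spectral sequence, the regularity of $\IZ$ (giving $K_n(\IZ)=0$ for $n\le -1$ and $\NK_n(\IZ)=0$), and the isomorphism $\IZ\xrightarrow{\cong}K_0(\IZ)$. Your proposal reproduces exactly this route, with the additional (correct) preliminary reduction from the family $\calvcyc$ to the trivial family using that the only virtually cyclic subgroups of a torsionfree group are trivial or infinite cyclic and $\NK_*(\IZ)$ vanishes; and you rightly flag that this settles the conjecture only for groups in $\calfj$, not in general.
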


A ring $S$ is called \emph{regular} if it is \emph{Noetherian}, i.e., every submodule of a
finitely generated $S$-module is finitely generated, and every $S$-module has a
finite-dimensional projective resolution. Since $\IZ$ is a regular ring,
Conjecture~\ref{con:K_0(ZG)_vanishes_for_torsionfree_G} is a special case of the following
more general conjecture.

\begin{conjecture}\label{con:K_0(ZG)_vanishes_for_torsionfree_G_and_regular_R}
  If $G$ is torsionfree  and $R$ is a regular ring, then the canonical map
  \[
  K_0(R) \xrightarrow{\cong}  K_0(RG)
\]
is bijective.

In particular $\widetilde{K}_0(RG)$ vanishes, if $G$ is torsionfree  and
$R$ is a principal ideal domain.
\end{conjecture}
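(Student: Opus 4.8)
The plan is to derive this from the Farrell--Jones Conjecture for algebraic $K$-theory with coefficients in the regular ring $R$, which is known for all groups in the class $\calfj$ and asserts that the assembly map
$H_n^G(\edub{G};\bfK_R) \xrightarrow{\cong} K_n(RG)$
is an isomorphism for all $n\in\IZ$; here $\edub{G}$ is the classifying space for the family $\calvcyc$ of virtually cyclic subgroups and $H_*^G(-;\bfK_R)$ is the associated equivariant homology theory with $H_n^G(G/H;\bfK_R)=K_n(RH)$. First I would cut the family down. Since $G$ is torsionfree, $\calvcyc$ consists of the trivial subgroup together with the infinite cyclic subgroups, so by the transitivity principle it suffices to control the relative term, which is built from the Bass--Nil groups $\NK_*(R)$ appearing in the Bass--Heller--Swan decomposition $K_n(R[\IZ]) \cong K_n(R) \oplus K_{n-1}(R) \oplus \NK_n(R) \oplus \NK_n(R)$. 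Because $R$ is regular these $\NK$-groups vanish, so the relative term is zero and the assembly map for $\calvcyc$ coincides with the one for the family $\trivial$. Hence $H_n^G(EG;\bfK_R) \xrightarrow{\cong} K_n(RG)$ for all $n$, and since $EG$ is free, the left-hand side is the ordinary homology $H_n(BG;\bfK(R))$ of $BG$ with coefficients in the (non-connective) $K$-theory spectrum of $R$.

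Next I would compute $H_0(BG;\bfK(R))$ via the Atiyah--Hirzebruch spectral sequence $E^2_{p,q}=H_p(BG;K_q(R)) \Rightarrow H_{p+q}(BG;\bfK(R))$. In total degree $0$ the only potentially nonzero $E^2$-terms are $H_p(BG;K_{-p}(R))$ with $p\ge 0$, since $BG$ has no homology in negative degrees; those with $p>0$ vanish because the negative $K$-groups of a regular ring are zero, so only $E^2_{0,0}=H_0(BG;K_0(R))=K_0(R)$ survives (using that $BG$ is connected). Therefore $H_0(BG;\bfK(R))\cong K_0(R)$, and chasing the edge homomorphism identifies the degree-$0$ assembly map with the canonical change-of-rings map $K_0(R)\to K_0(RG)$, $[P]\mapsto[RG\otimes_R P]$; this proves bijectivity. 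For the ``in particular'': a principal ideal domain is Noetherian of global dimension $\le 1$, hence regular, so the bijectivity just established applies; over such a ring every finitely generated projective module is free, so $K_0(R)$ is infinite cyclic on $[R]$, the isomorphism carries $[R]$ to $[RG]$, and consequently $K_0(RG)$ is infinite cyclic on the class of the free module $RG$. Thus $\widetilde{K}_0(RG)$, being the quotient of $K_0(RG)$ by the subgroup generated by classes of finitely generated free modules, is trivial.

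The main obstacle is that the Farrell--Jones Conjecture is not known in general, so this argument establishes the statement only for groups in $\calfj$ (a class which, as the introduction indicates, is nonetheless enormous). Inside the reduction, the delicate step is the passage from $\calvcyc$ to the family $\trivial$: one must know both that the obstruction to this passage is concentrated in the Bass--Nil groups $\NK_*(R)$ and that these, together with the negative $K$-groups of $R$, vanish for regular $R$. This is precisely where regularity of $R$ enters, and it is what makes the clean conclusion $K_0(R)\xrightarrow{\cong}K_0(RG)$ available rather than some extension involving Nil- and negative terms.
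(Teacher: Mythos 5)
The statement you were given is a \emph{conjecture}, not a theorem: the paper does not prove it unconditionally, and no unconditional proof is currently known. What the paper does supply (in the paragraph following Conjecture~\ref{con:FJC_torsionfree_regular_higher_K-theory}) is precisely the derivation you carry out, namely that the $K$-theoretic Farrell--Jones Conjecture for torsionfree groups and regular rings implies this one via the Atiyah--Hirzebruch spectral sequence, using $K_n(R)=0$ for $n\le -1$ and $\NK_n(R)=0$ when $R$ is regular, and for a principal ideal domain the fact that $K_0(R)\cong\IZ$ generated by $[R]$. Your argument reproduces this faithfully, correctly identifies the edge map with the change-of-rings map, and honestly flags that the conclusion is conditional on $G\in\calfj$. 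The extra paragraph you add on cutting $\calvcyc$ down to the trivial family (using that for torsionfree $G$ the only nontrivial virtually cyclic subgroups are infinite cyclic, and that the relative term is governed by the Bass--Nil groups, which vanish for regular $R$) is the correct and standard bridge from the Full Farrell--Jones Conjecture to Conjecture~\ref{con:FJC_torsionfree_regular_higher_K-theory}; the paper only asserts this implication without detail, so your version is a useful expansion rather than a divergence.
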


\begin{conjecture}[Kaplansky]\label{con:Idempotent_Conjecture}
  If $R$ is a field and the group $G$ is torsionfree,
  then the group ring $RG$ has only trivial idempotents, namely, $0$ and $1$.
\end{conjecture}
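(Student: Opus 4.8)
The plan is to deduce the statement from Conjecture~\ref{con:K_0(ZG)_vanishes_for_torsionfree_G_and_regular_R}. A field $R$ is Noetherian and every $R$-module is free, so $R$ is regular, and $R$ is also a principal ideal domain; hence Conjecture~\ref{con:K_0(ZG)_vanishes_for_torsionfree_G_and_regular_R} applies and yields that $\widetilde{K}_0(RG) = 0$, i.e.\ every finitely generated projective $RG$-module is stably free. Moreover the augmentation $\varepsilon \colon RG \to R$ induces an isomorphism $\varepsilon_* \colon K_0(RG) \xrightarrow{\cong} K_0(R) = \IZ$, inverse to the canonical isomorphism of Conjecture~\ref{con:K_0(ZG)_vanishes_for_torsionfree_G_and_regular_R}.

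First I would treat $R = \IC$, where a classical trace argument is available. Let $e \in \IC G$ be an idempotent and put $P := e\IC G$, a finitely generated projective right $\IC G$-module which is a direct summand of $\IC G$. By the previous paragraph $P$ is stably free, so $P \oplus \IC G^a \cong \IC G^b$ for some $a, b \in \IN$. The Hattori--Stallings rank $\rho$ is additive, sends $\IC G$ to the canonical generator $[1]$ indexed by the conjugacy class of the unit, and sends $P$ to the class of $e$ in $\IC G/[\IC G, \IC G]$; comparing $\rho(P) + a[1] = b[1]$ coefficientwise shows that the coefficient $\tr(e)$ of the unit in $e$ equals the integer $b - a$ and that all other conjugacy-class sums of the coefficients of $e$ vanish. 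On the other hand, regarding $e$ as a bounded operator on $\ell^2(G)$, Kaplansky's classical estimate gives $0 \le \tr(e) \le 1$ with $\tr(e) = 0$ only for $e = 0$. Hence $\tr(e) \in \{0, 1\}$: if $\tr(e) = 0$ then $e = 0$, and if $\tr(e) = 1$ then $\tr(1 - e) = 0$ and so $e = 1$.

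For a general field $R$ I would reduce to the case already treated. The finitely many coefficients of a given idempotent $e \in RG$ lie in a subfield $R_0 \subseteq R$; if $\operatorname{char}(R) = 0$ then $R_0$ is a finitely generated extension of $\IQ$, hence embeds into $\IC$, so $e$ is already a trivial idempotent in $\IC G$ and therefore in $RG$. Uniformly in the characteristic one may instead argue through $\varepsilon$: one has $\varepsilon_*[e RG] = \dim_R(\varepsilon(e) R) = \varepsilon(e)$, which lies in $\{0, 1\}$ since $\varepsilon(e)$ is an idempotent in the field $R$; if $\varepsilon(e) = 0$ then $[eRG] = 0$ in $K_0(RG)$ by injectivity of $\varepsilon_*$, whence $eRG = 0$ as soon as one knows that $RG$ is stably finite, and symmetrically $\varepsilon(e) = 1$ forces $e = 1$.

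The genuine obstacle is, of course, Conjecture~\ref{con:K_0(ZG)_vanishes_for_torsionfree_G_and_regular_R} itself. Inside the argument above the only non-formal ingredient is the behaviour of the trace of an idempotent in $\IC G$ \--- that $\tr(e)$ is a nonnegative real number vanishing only for $e = 0$ \--- equivalently, in the augmentation variant, the stable finiteness of group rings of torsionfree groups over fields: the step from $[P] = 0$ in $K_0$ to $P = 0$ is not purely $K$-theoretic. Both of these facts are classical for coefficients of characteristic zero (Kaplansky, via the canonical trace on $\IC G$).
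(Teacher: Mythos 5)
The paper gives no argument here; it simply records that Conjecture~\ref{con:Idempotent_Conjecture} follows from Conjecture~\ref{con:K_0(ZG)_vanishes_for_torsionfree_G_and_regular_R} and defers to~\cite{Lueck(2022book)} for details, so there is no in-text proof to compare against. Your reconstruction is the standard implication and, as far as it goes, it is correct: using the $K_0$-isomorphism together with the augmentation $\varepsilon$ to pin down $[eRG]\in\{0,1\}$, the Hattori--Stallings computation showing $\tr(e)=b-a\in\IZ$ over $\IC$, the Kaplansky trace bound $0\le\tr(e)\le 1$ with equality detecting $e\in\{0,1\}$, and the reduction of an arbitrary characteristic-zero field to $\IC$ through a finitely generated subfield are all in order.

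Most importantly, you correctly isolate the one genuinely non-$K$-theoretic step: passing from $[eRG]=0$ in $K_0(RG)$ to $eRG=0$, i.e.\ stable finiteness of $RG$ (or, over $\IC$, faithfulness and positivity of the canonical trace). That ingredient is a theorem of Kaplansky over fields of characteristic zero, but over fields of positive characteristic it is a separate open problem --- known for sofic groups, which happens to cover all torsionfree groups for which the Farrell--Jones Conjecture has actually been verified, but not provable from the vanishing of $\widetilde{K}_0(RG)$ alone; the condition $P\oplus RG^a\cong RG^a$ does not force $P=0$ without such an input. So your closing caveat is not pedantry: the one-line assertion in the survey quietly presupposes this extra input in positive characteristic, and you were right to make it explicit rather than pretend the implication is purely formal.
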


A group $G$ is called of type \textup {(FF)} or \textup{(FP)} respectively, if the trivial
$\IZ G$-module $\IZ$ possesses a finite dimensional resolution consisting of finitely
generated free $\IZ G$-modules or finitely generated projective $\IZ G$-modules respectively.

\begin{conjecture}[Serre]\label{con:Serre}
  A group $G$ is of type \textup {(FF)} if it is of type \textup{(FP)}.
\end{conjecture}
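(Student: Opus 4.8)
The plan is to deduce Serre's conjecture from Conjecture~\ref{con:K_0(ZG)_vanishes_for_torsionfree_G}, the vanishing of $\widetilde{K}_0(\IZ G)$ for torsionfree $G$, which is a consequence of the Farrell--Jones Conjecture; everything else is standard homological algebra.

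First I would check that a group $G$ of type \textup{(FP)} is automatically torsionfree. Indeed, type \textup{(FP)} forces the trivial $\IZ G$-module $\IZ$ to have a finite-dimensional resolution by finitely generated projective modules, hence finite projective dimension, hence $\cd_{\IZ} G < \infty$. If $G$ had a nontrivial finite subgroup $C$, then $\cd_{\IZ} C \le \cd_{\IZ} G < \infty$, contradicting the fact that $H^*(C;\IZ)$ is nonzero in infinitely many degrees. Thus $G$ is torsionfree, and Conjecture~\ref{con:K_0(ZG)_vanishes_for_torsionfree_G} gives $\widetilde{K}_0(\IZ G) = 0$.

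Next I would produce from the hypothesis a resolution $0 \to P_n \to F_{n-1} \to \cdots \to F_0 \to \IZ \to 0$ in which $F_0, \dots, F_{n-1}$ are finitely generated free and $P_n$ is finitely generated projective. One clean route: type \textup{(FP)} is equivalent to type $\mathrm{FP}_\infty$ together with $\operatorname{pd}_{\IZ G} \IZ = n < \infty$; choosing a resolution $F_\bullet$ of $\IZ$ by finitely generated free $\IZ G$-modules, the $n$-th syzygy $P_n := \ker(F_{n-1} \to F_{n-2})$ has projective dimension $0$ by Schanuel's lemma and is a direct summand of $F_{n-1}$, hence finitely generated projective. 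Alternatively, one can start from any finite projective resolution and repeatedly splice in elementary acyclic complexes $Q \xrightarrow{\id} Q$, concentrated in two adjacent degrees with $Q$ chosen so that $P_i \oplus Q$ becomes free, pushing the projectivity up to the top degree.

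Finally, since $\widetilde{K}_0(\IZ G) = 0$, the module $P_n$ is stably free, say $P_n \oplus \IZ G^a \cong \IZ G^b$. Splicing the acyclic complex $\IZ G^a \xrightarrow{\id} \IZ G^a$ into degrees $n$ and $n-1$ replaces $P_n$ by $\IZ G^b$ and $F_{n-1}$ by $F_{n-1} \oplus \IZ G^a$, yielding a finite resolution of $\IZ$ by finitely generated free $\IZ G$-modules; hence $G$ is of type \textup{(FF)}. The only genuinely nontrivial ingredient is the vanishing of $\widetilde{K}_0(\IZ G)$: were one not allowed to invoke Conjecture~\ref{con:K_0(ZG)_vanishes_for_torsionfree_G}, the obstacle would be precisely controlling the class $(-1)^n [P_n] \in \widetilde{K}_0(\IZ G)$, which is exactly the point where the Farrell--Jones Conjecture enters.
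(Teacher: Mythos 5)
The paper does not actually prove this implication; it merely records, after stating Conjecture~\ref{con:Serre}, that it ``follows from Conjecture~\ref{con:K_0(ZG)_vanishes_for_torsionfree_G}.'' Your proposal supplies exactly the intended deduction, and the overall strategy is right: (FP) forces $G$ to be torsionfree, Conjecture~\ref{con:K_0(ZG)_vanishes_for_torsionfree_G} then kills $\widetilde{K}_0(\IZ G)$, and the finiteness obstruction of a resolution with free modules in all but the top degree is the class of the top syzygy, whose stable freeness lets you splice to a finite free resolution. The splicing route you sketch as an alternative is the cleanest way to put the argument on the page.

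One local slip in the syzygy route: you assert that $P_n := \ker(F_{n-1} \to F_{n-2})$ is a direct summand of $F_{n-1}$. That is false in general. If the short exact sequence $0 \to P_n \to F_{n-1} \to \Omega^{n-1}\IZ \to 0$ split, then $\Omega^{n-1}\IZ$ would be projective, forcing $\operatorname{pd}_{\IZ G}\IZ \le n-1$ and contradicting the minimality of $n$. What you actually need is that $P_n$ is finitely generated, and the correct reason is that, by exactness of the chosen free resolution, $P_n = \operatorname{im}(F_n \to F_{n-1})$ is a quotient of the finitely generated module $F_n$. Schanuel's lemma gives projectivity; the quotient observation gives finite generation; the two together give what you wanted. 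With that repair (or just running the splicing argument you already outline), the proof is complete and matches what the paper asserts without detail.
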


Conjecture~\ref{con:Idempotent_Conjecture} follows from
Conjecture~\ref{con:K_0(ZG)_vanishes_for_torsionfree_G_and_regular_R}, whereas
Conjecture~\ref{con:Serre} follows from
Conjecture~\ref{con:K_0(ZG)_vanishes_for_torsionfree_G}.
Conjectures~\ref{con:K_0(ZG)_vanishes_for_torsionfree_G}
and~\ref{con:K_0(ZG)_vanishes_for_torsionfree_G_and_regular_R} are not true if one drops
the condition torsionfree.

For more information about the projective class group and the finiteness obstruction we refer for instance
to~\cite[Chapter~2]{Lueck(2022book)},~\cite{Wall(1965a)}, and~\cite{Wall(1966)}.

%%%%%%%%%%%%%%%%%%%%%%%%%%%%%%%%%%%%%%%%%%%%%%%%%%%%%%%%%%%%%%%%%%%%%
%%%%%%%%%%%%%%%%%% Section 3: The Whitehead group %%%%%%%%%%%%%%%%%%%%%%%%%%%%
%%%%%%%%%%%%%%%%%%%%%%%%%%%%%%%%%%%%%%%%%%%%%%%%%%%%%%%%%%%%%%%%%%%%%

\typeout{----------------------- Section 3: The Whitehead group    -------------------}

\section{The Whitehead group }%
\label{sec:The_Whitehead_group}

%-----------------------------------------------------------------------------

\subsection{The definition  of $K_1(S)$ and of the Whitehead group}%
\label{subsec:The_definition_of_K_1(S)_and_of_the_Whitehead_group}

\begin{definition}[$K_1$-group $K_1(S)$]\label{def:K_1(S)}
Let $S$ be a ring. Define the \emph{$K_1$-group} $K_1(S)$
to be the abelian group whose generators
are conjugacy classes $[f]$ of automorphisms $f\colon  P \to P$ of finitely
generated projective $S$-modules with the following relations:
\begin{itemize}
\item Additivity\\
Given a commutative diagram of finitely generated projective $S$-modules
\[
\xymatrix{
0 \ar[r] & P_1 \ar[r]^{i} \ar[d]^{f_1} &  P_2 \ar[r]^{p} \ar[d]^{f_2}
& P_3 \ar[r] \ar[d]^{f_3} & 0
\\
0 \ar[r] & P_1 \ar[r]^{i} &  P_2 \ar[r]^{p} & P_3 \ar[r] & 0
}
\]
with exact rows and automorphisms as vertical arrows, we get
\[
[f_1] + [f_3] = [f_2];
\]

\item Composition formula\\
Given automorphisms $f,g \colon  P \to P$ of a finitely generated projective
$S$-module $P$, we get
\[
[g \circ f] = [f] + [g].
\]
\end{itemize}
\end{definition}

One should view $K_1(S)$ together with the assignment sending an
automorphism $f \colon P \to P$ of a finitely generated projective
$S$-module $P$ to its class $[f] \in K_1(S)$ as the
\emph{universal determinant}.
Namely, for any abelian group $A$ and
assignment $a$ which  sends the automorphism $f$ of a finitely generated
projective $S$-module to $a(f) \in A$ such that $(A,a)$ satisfies
additivity and the composition formula appearing in
Definition~\ref{def:K_1(S)}, there exists precisely one homomorphism
of abelian groups $\phi \colon K_1(S) \to A$ such that $\phi([f]) =
a(f)$ holds for every automorphism $f$ of a finitely generated
projective $S$-module.

Define the abelian group $K_1^f(S)$
analogous to $K_1(S)$ but with finitely generated projective
replaced by finitely generated free everywhere. Then the canonical homomorphism
\[
\alpha \colon K_1^f(S) \xrightarrow{\cong} K_1(S), \quad [f]~\mapsto~[f]
\]
is an isomorphism.

An equivalent definition of $K_1(S)$ of a ring $S$ is the abelianization of the general
linear group $\GL(S) = \colim_{n \to \infty} \GL_n(S)$.

We always have the following map of abelian groups
\[
i \colon S^{\times}/[S^{\times},S^{\times}] \to K_1(S),
\quad [x] \mapsto [r_x \colon S \to S]
\]
where $r_x$ denotes right multiplication  with $x$. It is neither injective nor
surjective in general.  In particular any unit in $S$ defines an element in $K_1(S)$.

If $G$ is a group, a \emph{trivial unit} in $\IZ G$ is given by an element of the form
$\epsilon \cdot g$ for $\epsilon \in \{\pm 1\}$ and $g \in G$.

\begin{definition}[Whitehead group]\label{def:Whitehead_group}
    Given a group $G$, the \emph{Whitehead
  group} $\Wh(G)$ is  the quotient of $K_1(\IZ G)$ by the subgroup generated by trivial
units.
\end{definition}

%-----------------------------------------------------------------------------

\subsection{The $s$-Cobordism Theorem}\label{subsec:The_s-Cobordism_Theorem}

Given a closed manifold $M$, an \emph{$h$-cobordism $W$ over $M$} is a compact manifold
$W$ such that its boundary $\partial W$ can be written as a disjoint union
$\partial W = \partial_0W \amalg \partial_1 W$, there is a preferred identification of $M$ with
$\partial_0W$, and the inclusions $\partial_k W \to W$ are homotopy equivalences for
$k = 0,1$.

\begin{theorem}[The $s$-Cobordism Theorem]\label{the:s-Cobordism_Theorem}
 The set of isomorphism classes relative $M$  of $h$-cobordisms over $M$ can be
identified with $\Wh(\pi)$ if $M$ is connected, has dimension $\ge 5$, and $\pi$
denotes its fundamental group.
\end{theorem}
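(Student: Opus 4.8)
The statement is the classical $s$-cobordism theorem; I would prove it by reducing the geometry to a handle-theoretic problem and then solving that problem with Whitehead torsion as the complete invariant. First I would set up the invariant: given an $h$-cobordism $W$ over $M$, the inclusion $\partial_0 W = M \hookrightarrow W$ is a homotopy equivalence, so it has a well-defined Whitehead torsion $\tau(W, M) \in \Wh(\pi)$, computed from the cellular chain complex of the pair $(\widetilde{W}, \widetilde{M})$ over $\IZ\pi$ after choosing a handle (or CW) decomposition of $W$ rel $M$. The first task is to check this is well-defined (independent of the handle decomposition, using the sum and composition formulas for torsion) and invariant under isomorphism rel $M$; this gives a map $\Phi$ from isomorphism classes of $h$-cobordisms over $M$ to $\Wh(\pi)$.

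\textbf{Surjectivity.} To realize a given element $\tau \in \Wh(\pi)$, I would represent it by an invertible matrix over $\IZ\pi$ and build an $h$-cobordism by attaching, to $M \times [0,1]$ along $M \times \{1\}$, a trivial pair of handles of index $k$ and $k+1$ (for some $2 \le k \le \dim M - 3$, using $\dim M \ge 5$) whose attaching data is prescribed by that matrix. One checks the resulting cobordism is an $h$-cobordism (both ends are homotopy equivalences, since the algebraic effect is to add an acyclic based complex) with torsion exactly $\tau$. General position and the dimension hypothesis guarantee the embedded handles can be realized disjointly.

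\textbf{Injectivity — the main obstacle.} This is the heart of the theorem: if $\tau(W, M) = 0$ then $W \cong M \times [0,1]$ rel $M$. I would choose a handle decomposition of $W$ rel $\partial_0 W$, and first use handle trading and cancellation to eliminate handles of index $0$, $1$, and dually $n-1$, $n$ (here $n = \dim W = \dim M + 1 \ge 6$, and the low-index handle trading uses $\pi_1$-surjectivity of the ends together with the dimension bound — this is where connectedness and $\dim M \ge 5$ really enter). One is left with handles only in a middle range of indices $k, k+1$, and the chain complex reduces to a single based isomorphism $\partial \colon C_{k+1} \to C_k$ of free $\IZ\pi$-modules whose torsion is $\tau(W,M) = 0$. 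Now comes the Whitehead-Lemma step: since $[\partial] = 0$ in $\Wh(\pi)$, after stabilizing (adding cancelling handle pairs, a geometric move) the matrix of $\partial$ becomes a product of elementary matrices and a trivial-unit matrix; each elementary matrix can be realized by a handle slide, and a $(\pm g)$-entry by reorientation/relabeling, so we can arrange $\partial$ to be the identity matrix. The Whitney trick (valid since $n \ge 6$, again the dimension hypothesis) then lets us cancel the $k$- and $(k+1)$-handles in pairs geometrically, leaving no handles: $W$ is a product. Finally, the $h$-cobordism theorem of Smale ($\tau$ automatically zero when $\pi = 1$) combined with this torsion refinement shows $\Phi$ is injective, and surjectivity above shows it is onto, so $\Phi$ is the claimed bijection.

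The genuinely hard analytic/geometric input is the Whitney trick and the handle-trading lemmas, which is precisely where $\dim M \ge 5$ is indispensable; the algebra (well-definedness and the reduction to a single matrix whose class in $\Wh(\pi)$ is the obstruction) is comparatively formal once the geometry is in place.
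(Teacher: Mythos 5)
The paper itself does not prove Theorem~\ref{the:s-Cobordism_Theorem}; being a survey, it states the result and refers the reader to the literature (notably \cite[Chapter~2]{Lueck-Macko(2024)}), so there is no argument in the paper to compare yours against. Your sketch is the classical handle-theoretic proof of Barden, Mazur, and Stallings, and the outline is essentially the right one: define $\tau(W,M)\in\Wh(\pi)$ and check it is well defined and an isomorphism invariant; realize any class by attaching an algebraically cancelling pair of $k$- and $(k+1)$-handles to $M\times[0,1]$; and, for the converse, normalize a handle decomposition (eliminate $0$- and $1$-handles and dually, shift to two adjacent middle indices, use stabilization and handle slides to implement row operations reducing the boundary matrix to the identity, then cancel geometrically via the Whitney trick). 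The places you flag as using $\dim M\ge 5$ are exactly the right ones.

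One point deserves tightening. What your injectivity argument proves directly is only that the fiber of $\Phi$ over $0$ is a single point: $\tau(W,M)=0$ implies $W\cong M\times[0,1]$ rel $M$. The theorem as stated asserts a bijection, so you also need that $\tau(W_1,M)=\tau(W_2,M)$ forces $W_1\cong W_2$ rel $M$, which does not follow merely from the $\tau=0$ case plus surjectivity without one more step. The standard fix: after normalizing both cobordisms to have only $k$- and $(k+1)$-handles, equal torsions let you make the two boundary matrices literally equal by stabilizations and elementary moves, and in this dimension range the attaching spheres of middle-index handles are determined up to isotopy by their $\pi_1\mathbin{/}$homotopy data (another Whitney-trick input), so the two handle decompositions can be made identical; alternatively one can invoke inverse $h$-cobordisms and additivity of torsion under stacking. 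Finally, your last sentence invoking ``the $h$-cobordism theorem of Smale combined with this torsion refinement'' to get injectivity is not quite right logically: Smale's theorem is the $\pi=\{1\}$ special case of what you have just shown, not an independent ingredient.
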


Theorem~\ref{the:s-Cobordism_Theorem}  is astonishing, since the set of isomorphism classes of
$h$-cobordism relative $M$ over $M$ a priori depends on $M$, whereas $\Wh(\pi)$ depends
only on the fundamental group.  In the classification of closed manifolds it is often a
key step to decide whether an \emph{$h$-cobordism $W$ over $M$} is trivial, i.e,
isomorphic relative $M$ to $M \times [0,1]$, since this has the consequence that $M$ and
$\partial_1 W$ are isomorphic.  It is not hard to show that $\Wh(\{1\})$ is trivial which
implies together with the results above the Poincar\'e Conjecture in dimensions $\ge 5$
which says  that a closed manifold which is homotopic to $S^n$ is homeomorphic to $S^n$.

One can show for a finitely presented group $G$ and any natural number $n \ge 5$ that
$\Wh(G)$ is trivial if and only if for every connected $n$-dimensional closed
manifold $M$ with fundamental group isomorphic to $G$ every $h$-cobordisms over $M$ is
trivial, where one can work in the smooth, PL, or topological category.
So we have again an algebraic assertion and a topological assertion
for  any finitely presented group $G$ which turn out to be equivalent.

For more information and references about Theorem~\ref{the:s-Cobordism_Theorem}
we refer to~\cite[Chapter~2]{Lueck-Macko(2024)}.
Note  that it is a corner stone in the so called \emph{Surgery Program} aiming at the classification of manifolds.

%-----------------------------------------------------------------------------

\subsection{Simple homotopy equivalences}\label{subsec:Simple_homotopy_equivalences}

We have the inclusion of spaces
$S^{n-2}\subset S^{n-1}_+ \subset S^{n-1} \subset D^n$
where $S^{n-1}_+ \subset S^{n-1}$ is the upper hemisphere. The pair
$(D^n,S^{n-1}_+)$ carries an obvious relative $CW$-structure. Namely,
attach an $(n-1)$-cell to $S^{n-1}_+$ by the attaching map $\id\colon  S^{n-2} \to
S^{n-2}$ to obtain $S^{n-1}$. Then we attach to $S^{n-1}$ an $n$-cell by the
attaching map $\id\colon  S^{n-1} \to S^{n-1}$ to obtain $D^n$.
Let $X$ be a $CW$-complex. Let $q\colon  S^{n-1}_+ \to X$ be a map satisfying
$q(S^{n-2}) \subset X_{n-2}$ and $q(S^{n-1}_+) \subset X_{n-1}$.
Let $Y$ be the space $D^n \cup_{q} X$, i.e., the pushout
\[
  \xymatrix{S^{n-1}_+ \ar[r]^q \ar[d]_i
    &
    X \ar[d]^{\overline{i}}
    \\
    D^n \ar[r]_{\overline{q}}
    &
    Y
    }
\]
where $i$ is the inclusion.
Then $Y$ inherits a $CW$-structure by putting $Y_k = \overline{i}(X_k)$
for $k \le n-2$, $Y_{n-1} = \overline{i}(X_{n-1}) \cup \overline{q}(S^{n-1})$ and
$Y_k = \overline{i}(X_k) \cup \overline{q}(D^n)$ for $k \ge n$.
Note that $Y$ is obtained from $X$ by attaching one $(n-1)$-cell
and one $n$-cell. Since the map $i\colon  S^{n-1}_+ \to D^n$ is a homotopy
equivalence and a cofibration, the map $\overline{i}\colon  X \to Y$ is a homotopy equivalence
and a cofibration.
We call $\overline{i}$ an \emph{elementary expansion}
and say that $Y$ is obtained from $X$ by an elementary expansion.
There is a map $r\colon  Y \to X$ with $r \circ \overline{i} = \id_X$.
The map $r$ is unique up to homotopy relative $\overline{i}(X)$. We call any such map $r$
an \emph{elementary collapse}
and say that $X$ is obtained from $Y$ by an elementary collapse.

The next definition is due to JHC Whitehead~\cite{Whitehead(1950)}.

\begin{definition}[Simple homotopy equivalence]%
\label{def:simple_homotopy_equivalence}
Let $f\colon  X \to Y$ be a map of finite $CW$-complexes. We call it
a simple \emph{homotopy equivalence}
if there is a sequence of maps
\[
X = X[0] \xrightarrow{f_0} X[1] \xrightarrow{f_1} X[2] \xrightarrow{f_2}  \cdots
\xrightarrow{f_{n-1}} X[n] = Y
\]
such that each $f_i$ is an elementary expansion or elementary collapse and
$f$ is homotopic to the composite of the maps $f_i$.
\end{definition}

The idea of the definition of a simple homotopy equivalence is that
such a map can be written as a composite of elementary maps, namely, elementary
expansions and collapses,  which
are obviously homotopy equivalences and in some sense the smallest and
most elementary steps to pass from one finite $CW$-complex to another
without changing the homotopy type. If one works with simplicial
complexes, an elementary map has a purely combinatorial description.
An elementary collapse means to delete the interior of one  simplex and one of its faces
that is not shared by another simplex.
So one can describe the passage from one finite simplicial complex to
another coming from a simple homotopy equivalence by finitely many
combinatorial data. 

This approach is similar to the idea in knot theory that two knots are
equivalent if one can pass from one knot to the other by a sequence of
elementary moves, the so-called Reidemeister moves. A Reidemeister
move obviously does not change the equivalence class of a knot and,
indeed, it turns out that one can pass from one knot to a second knot
by a sequence of Reidemeister moves if and only if the two knots are equivalent,
The analogous statement is not true for
homotopy equivalences $f\colon X \to Y$ of finite $CW$-complexes
because there is an obstruction for $f$ to be simple, namely, its
Whitehead torsion $\tau(f)$ which is an element in $\Wh(\pi_1(Y))$.

\begin{theorem}[Whitehead torsion and simple homotopy equivalences]%
\label{the:simple_and_tau(f)_vanishes} \
  \begin{enumerate}
  \item\label{the:simple_and_tau(f)_vanishes:obstruction} Let $f \colon X \to Y$ be a
    homotopy equivalence of connected finite $CW$-complexes. Then its Whitehead torsion
    $\tau(f) \in \Wh(\pi_1(Y))$ vanishes if and only if $f$ is a simple homotopy
    equivalence;

  \item\label{the:simple_and_tau(f)_vanishes:realization} Let $Y$ be a connected finite
    $CW$-complex.  Then for any element $u \in \Wh(\pi_1(Y))$ there exists a homotopy
    equivalence $f\colon X \to Y$ with a connected finite $CW$-complex $X$ as source
    satisfying $\tau(f) = u$.
\end{enumerate}
\end{theorem}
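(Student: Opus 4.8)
The plan is to recall the construction of Whitehead torsion for a homotopy equivalence of finite CW-complexes and then establish both statements by reducing to the behaviour of torsion under elementary expansions and collapses together with a "realization" construction that builds a CW-complex with a prescribed torsion.

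For part~\eqref{the:simple_and_tau(f)_vanishes:obstruction}, I would first define $\tau(f) \in \Wh(\pi_1(Y))$. Using a homotopy inverse one may replace $f$ by an inclusion $X \hookrightarrow Y$ that is a homotopy equivalence, after passing to the mapping cylinder; more precisely one works with the pair $(\cyl(f), X)$, whose relative cellular $\IZ\pi$-chain complex (with $\pi = \pi_1(Y)$, lifting to the universal cover) is a bounded acyclic complex of finitely generated free $\IZ\pi$-modules with preferred bases coming from the cells. Such a complex has a well-defined torsion in $K_1(\IZ\pi)$, and its image in $\Wh(\pi)$ is independent of the choices of cellular bases (the ambiguity lies in permutations and multiplication by trivial units $\pm g$), of the lift, and of the homotopy; this is the standard well-definedness argument, which I would cite rather than reprove. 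The key computational input is: an elementary expansion $\overline{i}\colon X \to Y$ as in Definition~\ref{def:simple_homotopy_equivalence} has $\tau(\overline{i}) = 0$, because the relative chain complex of $(Y,X)$ is concentrated in two adjacent degrees with the boundary map an isomorphism given, in the preferred bases, by the identity $(1\times 1)$-matrix (the attached $(n-1)$-cell maps isomorphically onto the attached $n$-cell's boundary). Dually an elementary collapse also has trivial torsion. Since torsion is additive under composition — the composition formula in Definition~\ref{def:K_1(S)} together with a gluing/sum argument for the mapping cylinders gives $\tau(g \circ f) = \tau(g) + g_*\tau(f)$ — a map that is a finite composite of elementary expansions and collapses has vanishing torsion, and since torsion depends only on the homotopy class, any simple homotopy equivalence has $\tau(f) = 0$. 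The converse — that $\tau(f) = 0$ forces $f$ to be simple — is the deeper direction: here one argues that if the torsion vanishes then, after composing $f$ with enough elementary expansions on both sides (which do not change whether $f$ is simple and only alter $\tau(f)$ trivially), one can arrange the relative chain complex to be not merely acyclic with trivial torsion but actually "based-isomorphic" to a direct sum of elementary complexes $(\cdots \to 0 \to \IZ\pi \xrightarrow{1} \IZ\pi \to 0 \to \cdots)$; each such elementary algebraic summand can then be realized geometrically by an elementary collapse, so $f$ becomes a composite of elementary moves. Making the algebra into geometry — turning a chain-level based isomorphism into an actual sequence of expansions and collapses, controlling the attaching maps in the process — is the part requiring genuine care, and I would lean on Whitehead's original cell-trading arguments~\cite{Whitehead(1950)}.

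For part~\eqref{the:simple_and_tau(f)_vanishes:realization}, given $Y$ connected finite and $u \in \Wh(\pi_1(Y))$, represent $u$ by an invertible matrix $A \in \GL_n(\IZ\pi)$ with $\pi = \pi_1(Y)$. The plan is to build $X$ from $Y$ by attaching $n$ many $k$-cells and $n$ many $(k+1)$-cells (for some $k \ge 2$) along null-homotopic attaching maps, so that the inclusion $Y \hookrightarrow X$ is a homotopy equivalence whose relative chain complex is $\IZ\pi^n \xrightarrow{\partial} \IZ\pi^n$ concentrated in degrees $k+1$ and $k$; by choosing the attaching maps of the $(k+1)$-cells appropriately one can prescribe the matrix of $\partial$ to be exactly $A$. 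Concretely one attaches the $k$-cells trivially to get $Y \vee \bigvee_n S^k$, and then attaches the $(k+1)$-cells using maps $S^k \to Y \vee \bigvee_n S^k$ whose components, read off in $\pi_k$ of the wedge (which contains a free $\IZ\pi$-module of rank $n$ once $k \ge 2$), are the columns of $A$; invertibility of $A$ over $\IZ\pi$ guarantees the inclusion is a homology equivalence on universal covers, hence a homotopy equivalence by Whitehead's theorem. Then the retraction $r\colon X \to Y$ is a homotopy equivalence with $\tau(r) = [A^{-1}] $ up to sign, and by adjusting whether one takes $r$ or replaces $A$ by a suitable representative one gets the desired $\tau(f) = u$. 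The finiteness of $X$ is automatic. The only subtlety here is bookkeeping with the direction of the torsion (source versus target, $A$ versus $A^{-1}$, the sign conventions in the definition of $\tau$), which is routine once the conventions are fixed; I would simply choose conventions so that the statement reads cleanly.

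The main obstacle is the converse in part~\eqref{the:simple_and_tau(f)_vanishes:obstruction}: translating the algebraic statement "the torsion vanishes in $\Wh(\pi)$" into the geometric statement "$f$ is a finite composite of elementary expansions and collapses." This requires the geometric realization of elementary row and column operations and stabilizations on the relative chain complex by honest cellular moves, which is exactly the content of Whitehead's simple-homotopy-type machinery; everything else — the definition of $\tau$, its additivity, its vanishing on elementary maps, and the realization construction — is comparatively formal.
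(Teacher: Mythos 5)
The paper states Theorem~\ref{the:simple_and_tau(f)_vanishes} without proof, treating it as a classical fact and referring to~\cite{Cohen(1973)},~\cite{Whitehead(1950)}, and~\cite[Chapter~2]{Lueck(2022book)}. Your sketch is precisely the standard argument found in those sources — $\tau(f)$ defined as the torsion of the based acyclic relative $\IZ\pi$-chain complex of $(\cyl(f),X)$, vanishing on elementary expansions and collapses together with the composition formula $\tau(g\circ f)=\tau(g)+g_*\tau(f)$ for the easy direction, Whitehead's cell-trading machinery for the converse, and the attach-cells-along-a-prescribed-matrix construction for the realization statement — and it is correct at the level of detail offered, with your acknowledged reliance on the cell-trading argument sitting exactly where the real content of the proof lies.
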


Theorem~\ref{the:simple_and_tau(f)_vanishes} implies for any finitely presented group $G$
and any connected finite $CW$-complex $Y$ with $\pi_1(Y) \cong G$ that $\Wh(G)$ is trivial
if and only if every homotopy equivalence $f\colon X \to Y$ with a connected finite
$CW$-complex $X$ as source and $Y$ as target is simple.

We mention the deep theorem due to Chapman~\cite{Chapman(1973)} that every  (not necessarily cellular) homeomorphism
of finite $CW$-complexes is simple.

For more information about the Whitehead group and simple homotopy equivalences
 we refer for instance~\cite{Cohen(1973)} and~\cite[Chapter~2]{Lueck(2022book)}.

%-----------------------------------------------------------------------------

\subsection{Conjecture about the Whitehead group  for  torsionfree groups}%
\label{subsec:Conjecture_about_the_Whitehead_group_for_torsionfree_groups}

\begin{conjecture}[Farrell-Jones Conjecture for $\Wh(G)$ for torsionfree $G$]%
\label{con:FHC_Farrell-Jones_Conjecture_for_Wh(G)_for_torsionfree_Groups}%
Let $G$ be a torsionfree group. Then $\Wh(G)$ vanishes.
\end{conjecture}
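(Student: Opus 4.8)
The statement is open in general; the route I would take proves it for every Farrell-Jones group, the one genuinely substantial input being that such a $G$ satisfies the Farrell-Jones Conjecture in algebraic $K$-theory with coefficients in $\IZ$ (to be formulated below). That conjecture asserts that the assembly map
\[
H_n^G\bigl(\EGF{G}{\calvcyc};\bfK_{\IZ}\bigr)\xrightarrow{\ \cong\ } H_n^G(\pt;\bfK_{\IZ}) = K_n(\IZ G)
\]
is an isomorphism for all $n\in\IZ$, where $\EGF{G}{\calvcyc}$ is the classifying space for the family $\calvcyc$ of virtually cyclic subgroups of $G$ and $\bfK_{\IZ}$ is the nonconnective algebraic $K$-theory spectrum of $\IZ$.

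First I would use that $G$ is torsionfree: then every virtually cyclic subgroup is either trivial or infinite cyclic, so $\calfin = \trivial$ and $\EGF{G}{\calfin}$ is a model for the universal free $G$-$CW$-complex with $G\backslash\EGF{G}{\calfin} = BG$. By the transitivity principle it then suffices to show that the relative assembly term $H_n^G\bigl(\EGF{G}{\calvcyc},\EGF{G}{\calfin};\bfK_{\IZ}\bigr)$ vanishes. This term is built, via the Bass--Heller--Swan decomposition, out of the algebraic $K$-theoretic Nil-groups $\NK_n(\IZ)$ attached to the infinite cyclic isotropy groups, and these vanish because $\IZ$ is a regular ring. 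Hence the assembly map for the trivial family,
\[
H_n(BG;\bfK_{\IZ}) = H_n^G\bigl(\EGF{G}{\calfin};\bfK_{\IZ}\bigr)\xrightarrow{\ \cong\ } K_n(\IZ G),
\]
is an isomorphism, where I have identified the $G$-equivariant homology of a free $G$-$CW$-complex with the non-equivariant homology of its quotient.

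Next I would feed $n=1$ into the Atiyah--Hirzebruch spectral sequence
\[
E^2_{p,q} = H_p\bigl(BG;K_q(\IZ)\bigr)\ \Longrightarrow\ H_{p+q}(BG;\bfK_{\IZ}).
\]
The extra input is that the negative $K$-theory of the regular ring $\IZ$ vanishes, i.e., $K_q(\IZ)=0$ for $q\le -1$; together with $K_0(\IZ)=\IZ$ and $K_1(\IZ)=\{\pm 1\}\cong\IZ/2$ this leaves in total degree $1$ only the two entries $E^2_{1,0}=H_1(BG;\IZ)=G/[G,G]$ and $E^2_{0,1}=K_1(\IZ)$. No differential can enter or leave $E_{1,0}$ (the relevant columns carry $K_q(\IZ)=0$), so $K_1(\IZ G)$ sits in a short exact sequence $0\to Q\to K_1(\IZ G)\to G/[G,G]\to 0$ with $Q$ a quotient of $\IZ/2$. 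I would then identify the two pieces with trivial units: the edge homomorphism $K_1(\IZ G)\to H_1(BG;\IZ)=G/[G,G]$ sends the class of $g\in G$ to the class of $g$, and the other edge homomorphism is the map $K_1(\IZ)\to K_1(\IZ G)$ induced by $\IZ\hookrightarrow\IZ G$, whose image $\{[\pm 1]\}$ surjects onto $Q$. Hence the subgroup of $K_1(\IZ G)$ generated by the trivial units $\epsilon\cdot g$ with $\epsilon\in\{\pm 1\}$, $g\in G$ already surjects onto $K_1(\IZ G)$, so $\Wh(G)=K_1(\IZ G)/\langle\text{trivial units}\rangle$ vanishes.

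The genuine obstacle is the Farrell-Jones Conjecture itself; granting it, everything above is bookkeeping, the only points needing care being the transitivity principle and the vanishing of the relative term $H_n^G\bigl(\EGF{G}{\calvcyc},\EGF{G}{\calfin};\bfK_{\IZ}\bigr)$ — which rests on the vanishing of $\NK_n(\IZ)$ — and the collapse of the spectral sequence in degree $1$, which rests on the vanishing of $K_q(\IZ)$ for $q\le -1$.
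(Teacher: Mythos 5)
The statement you are asked about is labelled as a conjecture in the paper and has no proof there; the paper only records, in the remark following Conjecture~\ref{con:FJC_torsionfree_regular_higher_K-theory}, that the $K$-theoretic Farrell--Jones Conjecture for torsionfree groups and regular rings implies it via the Atiyah--Hirzebruch spectral sequence together with the vanishing of $\NK_n(\IZ)$ and of $K_n(\IZ)$ for $n\le -1$. You correctly recognize that the statement is open and instead derive it from the Farrell--Jones Conjecture, and your derivation (reduction from $\calvcyc$ to the trivial family via vanishing Nil terms, then the degree-one analysis of the AHSS for $H_*(BG;\bfK_\IZ)$, and the identification of the two edge terms with the trivial units) is precisely the bookkeeping the paper alludes to but does not spell out. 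One small caveat worth flagging: the reduction of the relative term $H_n^G\bigl(\EGF{G}{\calvcyc},\EGF{G}{\calfin};\bfK_{\IZ}\bigr)$ to Nil-groups requires not just the Bass--Heller--Swan decomposition but also that the assembly map for the infinite cyclic subgroups themselves is understood, i.e., that for $C\cong\IZ$ the cokernel of $H_*(BC;\bfK_\IZ)\to K_*(\IZ C)$ is exactly $\NK_*(\IZ)\oplus\NK_*(\IZ)$; this is true and standard, but it is the step that makes the ``built out of Nil-groups'' assertion precise.
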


%%%%%%%%%%%%%%%%%%%%%%%%%%%%%%%%%%%%%%%%%%%%%%%%%%%%%%%%%%%%%%%%%%%%%
%%%%%%%%%%%%%%%%%% Section 4:Lower K-theory  %%%%%%%%%%%%%%%%%%%%%%%%%%%%%%%%
%%%%%%%%%%%%%%%%%%%%%%%%%%%%%%%%%%%%%%%%%%%%%%%%%%%%%%%%%%%%%%%%%%%%%

\typeout{----------------------- Section 4: Lower K-theory  -------------------}

\section{Lower $K$-theory}\label{sec:Lower_K-goups}

%-----------------------------------------------------------------------------

\subsection{The Bass-Heller-Swan decomposition}\label{subsec:The_Bass-Heller-Swan_decomposition}

In the sequel we write $R[\IZ]$ as the ring $R[t,t^{-1}]$ of finite Laurent polynomials in
$t$ with coefficients in $R$.  Obviously the ring $R[t]$ of polynomials in $t$ with
coefficients in $R$ is a subring of $R[t,t^{-1}]$. Define the ring homomorphisms
\[
\begin{array}{lll}
\ev_0 \colon R[t]~\to~R, & & \sum_{n \in \IZ} r_n t^n~\mapsto~r_0;
\\
i' \colon R~\to~R[t], & &  r~\mapsto~r\cdot t^0;
\\
i \colon R~\to~R[t,t^{-1}], & &  r~\mapsto~r\cdot t^0.
\end{array}
\]

\begin{definition}[$\NK_n(R)$]\label{def:NK_n(R)}
Define for $n = 0,1$
\[
\NK_n(R)  := \ker\bigl((\ev_0)_* \colon  K_n(R[t]) \to K_n(R)\bigr).
\]
\end{definition}

Recall that an endomorphism $f \colon P \to P$ of an  $R$-module
$P$ is called \emph{nilpotent}  if there exists a positive
integer $n$ with $f^n = 0$.

\begin{definition}[Nil-group $\Nil_0(R)$]
Define the \emph{$0$-th Nil-group} $\Nil_0(R)$
to be the abelian group whose generators are
conjugacy classes $[f]$ of nilpotent endomorphisms $f\colon P \to P$ of
finitely generated projective $R$-modules with the following relations.
Given a commutative diagram of finitely generated projective
$R$-modules
\[
\xymatrix{ 0 \ar[r] & P_1 \ar[r]^{i} \ar[d]^{f_1} & P_2 \ar[r]^{p}
  \ar[d]^{f_2} & P_3 \ar[r] \ar[d]^{f_3} & 0
  \\
  0 \ar[r] & P_1 \ar[r]^{i} & P_2 \ar[r]^{p} & P_3 \ar[r] & 0 }
\]
with exact rows and nilpotent endomorphisms as vertical arrows, we
get
\[
[f_1] + [f_3] = [f_2].
\]
\end{definition}

Let $\iota \colon K_0(R) \to \Nil_0(R)$ be the homomorphism sending
the class $[P]$ of a finitely generated projective $R$-module $P$ to
the class $[0 \colon P \to P]$ of the trivial endomorphism of $P$.

\begin{definition}[Reduced Nil-group $\widetilde{\Nil}_0(R)$]
Define the \emph{reduced  $0$-th Nil-group}
$\widetilde{\Nil}_0(R)$ to be  the cokernel of the map $\iota$.
\end{definition}

The homomorphism $\Nil_0(R) \to K_0(R), \; [f \colon P \to P] \mapsto [P]$ is
a retraction of the map $\iota$.  So we get a natural
splitting
\[
\Nil_0(R)~\xrightarrow{\cong}~\widetilde{\Nil}_0(R) \oplus K_0(R).
\]

Denote by
\[
j \colon \NK_1(R) \to K_1(R[t])
\]
the inclusion. Let
\[
l_{\pm} \colon R[t] \to R[t,t^{-1}]
\]
be the inclusion of rings sending $t$ to $t^{\pm 1}$. Define
\[
j_{\pm} := K_1(l_{\pm}) \circ j \colon \NK_1(R) \to K_1(R[t,t^{-1}]).
\]
The homomorphism
\[
B \colon K_0(R) \to K_1(R[t,t^{-1}])
\]
sends the class $[P]$ of a
finitely generated projective $R$-module $P$ to the class
$[r_t \otimes_R \id_P]$ of the $R[t,t^{-1}]$-automorphism
$r_t \otimes_R \id_P \colon R[t,t^{-1}] \otimes_R P \to R[t,t^{-1}] \otimes_R P$
that maps $u \otimes p$ to $ut \otimes p$.  The homomorphism
\[
N' \colon \Nil_0(R) \to K_1(R[t])
\]
sends the class $[f]$ of the nilpotent
endomorphism $f \colon P \to P$ of the finitely generated projective
$R$-module $P$ to the class $[\id - r_{t} \otimes_R f]$ of the
$R[t]$-automorphism
\[
\id - r_{t} \otimes_R f \colon R[t] \otimes_R P \to R[t]
\otimes_R P, \quad 
u \otimes p~\mapsto~u \otimes p - ut \otimes f(p).
\]
This is indeed an automorphism. Namely, if $f^{n+1} = 0$, then an
inverse is given by $\sum_{k=0}^{n} \left(r_{t} \otimes_R f\right)^k$.
The composite  of $N'$ with both $(\ev_0)_* \colon K_1(R[t])
\to K_1(R)$ and $\iota \colon K_0(R) \to \Nil_0(R)$ is trivial.  Hence
$N'$ induces a homomorphism
\[
N \colon \widetilde{\Nil}_0(R) \to \NK_1(R).
\]
The proof of the following theorem can be found for instance 
in~\cite{Bass-Heller-Swan(1964)} (for regular rings),~\cite[Chapter XII]{Bass(1968)},~%
\cite[Theorem~3.2.22 on page~149]{Rosenberg(1994)},
and~\cite[3.6 in Section~III.3 on page~205]{Weibel(2013)}.

\begin{theorem}[Bass-Heller-Swan decomposition for $K_1$]%
\label{the:Bass-Heller-Swan_decomposition_for_K_1}%
The following maps are
isomorphisms of abelian groups, natural in $R$,
\begin{eqnarray*}
& N \colon  \widetilde{\Nil}_0(R)  \xrightarrow{\cong} \NK_1(R);&
\\
& j \oplus K_1(i') \colon  \NK_1(R) \oplus K_1(R)  
\xrightarrow{\cong}  K_1(R[t]);&
\\
&B \oplus K_1(i) \oplus j_+ \oplus j_- \colon 
 K_0(R) \oplus K_1(R) \oplus \NK_1(R) \oplus \NK_1(R)
 \xrightarrow{\cong}  K_1(R[t,t^{-1}]).&
\end{eqnarray*}
\end{theorem}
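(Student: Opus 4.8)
The plan is to proceed in three stages, following the classical argument, reducing the Laurent case to the polynomial case and the polynomial case to the Nil-group computation. First I would establish the middle isomorphism $j \oplus K_1(i') \colon \NK_1(R) \oplus K_1(R) \xrightarrow{\cong} K_1(R[t])$. The map $K_1(i')$ is split injective because the ring homomorphism $\ev_0 \colon R[t] \to R$ satisfies $\ev_0 \circ i' = \id_R$, so $K_1(\ev_0) \circ K_1(i') = \id$. By definition $\NK_1(R) = \ker\bigl((\ev_0)_* \colon K_1(R[t]) \to K_1(R)\bigr)$, and $j$ is the inclusion of this kernel. Since $(\ev_0)_*$ is a split surjection with kernel $\NK_1(R)$ and with a chosen splitting $K_1(i')$, the canonical map from $\NK_1(R) \oplus K_1(R)$ to $K_1(R[t])$ sending $(x,y)$ to $j(x) + K_1(i')(y)$ is an isomorphism; this is pure homological algebra once the definitions are unwound.

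Second, I would prove that $N \colon \widetilde{\Nil}_0(R) \xrightarrow{\cong} \NK_1(R)$. Surjectivity amounts to showing that every class in $K_1(R[t])$ mapping to $0$ under $\ev_0$ can be represented by an automorphism of the form $\id - r_t \otimes_R f$ for some nilpotent $f$; concretely, one represents an element of $\NK_1(R)$ by an invertible matrix over $R[t]$ congruent to the identity at $t=0$, and by stabilizing and doing elementary row/column operations over $R[t]$ one brings it into the normal form $I - tM$ with $M$ nilpotent — this is the content of the ``higher relations'' / linearization argument (cf.\ \cite[Chapter XII]{Bass(1968)}). For injectivity one constructs the inverse directly: the assignment sending an automorphism $\alpha$ of $R[t]\otimes_R P$ of the form $\id - r_t\otimes_R f$ to $[P]$ if $f$ is nilpotent is well defined and, combined with the already-noted facts that $(\ev_0)_* \circ N' = 0$ and $N' \circ \iota$ factors trivially, yields a retraction of $N$ up to the splitting $\Nil_0(R) \cong \widetilde{\Nil}_0(R) \oplus K_0(R)$. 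I would therefore invoke the stated splitting $\Nil_0(R) \xrightarrow{\cong} \widetilde{\Nil}_0(R) \oplus K_0(R)$ and work with the full $N' \colon \Nil_0(R) \to K_1(R[t])$, showing its image together with $K_1(i')$ generates, and that $N'$ is injective on the $\widetilde{\Nil}_0(R)$-summand.

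Third, I would assemble the Laurent case. The ring $R[t,t^{-1}]$ is the pushout $R[t] \leftarrow R[t^{\pm}] \text{ via } l_\pm$ in an appropriate sense, and the standard tool is the Mayer–Vietoris / Milnor patching sequence relating $K_1$ and $K_0$ of the four rings $R$, $R[t]$ (two copies, via $l_+$ and $l_-$), and $R[t,t^{-1}]$. This produces an exact sequence whose connecting map $K_1(R[t,t^{-1}]) \to K_0(R)$ is split by $B$ (using that $B$ followed by the relevant boundary map is $\pm\id$ on $K_0(R)$, computed via the automorphism $r_t$), and whose ``middle'' term is covered by $K_1(R)$ together with the two copies of $\NK_1(R)$ pushed forward by $j_\pm$; the overlap between the two $\NK_1(R)$-contributions is killed because a polynomial in $t$ that is also a polynomial in $t^{-1}$ is constant. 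Carefully bookkeeping this gives that $B \oplus K_1(i) \oplus j_+ \oplus j_-$ is an isomorphism. Naturality in $R$ is immediate throughout since every map in sight — $\ev_0$, $i$, $i'$, $l_\pm$, $r_t$, $B$, $N'$ — is natural in $R$.

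The main obstacle is the surjectivity half of the $N$-isomorphism: turning an arbitrary invertible matrix over $R[t]$ that reduces to the identity at $t = 0$ into the canonical linear form $I - tM$ with $M$ nilpotent, using only elementary operations and stabilization. This is where the genuine computation lives (the ``Higman linearization'' trick), and it is the step I would lean on the cited references for rather than reproduce in detail.
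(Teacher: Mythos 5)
The paper itself gives no proof of this theorem, only citations to Bass--Heller--Swan, Bass's book, Rosenberg, and Weibel; your plan is broadly the classical route from those sources, and your first step --- that $j \oplus K_1(i')$ is an isomorphism because $\ev_0 \circ i' = \id_R$ makes $K_1(\ev_0)$ a split surjection with kernel $\NK_1(R)$ --- is correct as stated.

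The second and third steps, however, each contain a genuine error. In the second step, your proposed inverse of $N$ sends $[\id - r_t \otimes_R f]$ to $[P]$. That is a map to $K_0(R)$, not to $\widetilde{\Nil}_0(R)$, so it cannot retract $N$. The retraction you actually need sends $[\id - r_t \otimes_R f]$ to the class $[f]$ of the nilpotent endomorphism itself, and proving that this assignment descends to a well-defined map on $\NK_1(R)$ --- independent of the stable matrix representative and of the linearization chosen --- is precisely where the work lies; it is not something one can dispatch with ``is well defined'' after citing Higman linearization for surjectivity. In the third step, the square with vertices $R$, $R[t]$, $R[t^{-1}]$, $R[t,t^{-1}]$ is indeed cartesian, but neither $l_+ \colon R[t] \to R[t,t^{-1}]$ nor $l_- \colon R[t^{-1}] \to R[t,t^{-1}]$ is surjective, so Milnor patching does not apply and does not produce the Mayer--Vietoris sequence you invoke. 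That sequence is essentially the Fundamental Theorem (Theorem~\ref{the:Fundamental_Theorem_of_K-theory_in_dimension_1} of the paper), which in the cited sources is \emph{derived from} the Bass--Heller--Swan decomposition or from a localization theorem, not from patching a non-Milnor square; using it here is circular unless you supply an independent proof. The standard independent route is the localization sequence for $R[t] \to R[t,t^{-1}]$ obtained by inverting $t$, with the relative $K$-group identified by devissage on finitely generated $t$-torsion $R[t]$-modules (together with the symmetric statement for $t^{-1}$), or else Bass's direct algebraic argument in Chapter~XII. Your observation that a Laurent polynomial that is simultaneously a polynomial in $t$ and in $t^{-1}$ must be constant explains why the square of rings is cartesian, but a cartesian square alone yields no exact sequence in $K$-theory.
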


One easily checks that Theorem~\ref{the:Bass-Heller-Swan_decomposition_for_K_1}
applied to $R = \IZ G$ implies the following reduced version

\begin{theorem}[Bass-Heller-Swan decomposition for $\Wh(G \times \IZ)$]%
\label{the:Bass-Heller-Swan_decomposition_for_Whitehead_groups}
Let $G$ be a group. Then there is an isomorphism of abelian groups, natural in $G$
\[
\overline{B} \oplus  \Wh(i) \oplus \overline{j_+} \oplus \overline{j_-} \colon
\widetilde{K}_0(\IZ G) \oplus \Wh(G) \oplus \NK_1(\IZ G) \oplus \NK_1(\IZ G)
 \xrightarrow{\cong}  \Wh(G \times \IZ).
\]
\end{theorem}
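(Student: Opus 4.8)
The plan is to derive this reduced statement directly from the unreduced Bass--Heller--Swan decomposition of Theorem~\ref{the:Bass-Heller-Swan_decomposition_for_K_1} applied to the ring $R = \IZ G$, using the ring identification $\IZ G[t,t^{-1}] = \IZ[G\times\IZ]$ and then passing to the quotients by the subgroups generated by trivial units on both sides. First I would record the starting point: Theorem~\ref{the:Bass-Heller-Swan_decomposition_for_K_1} for $R = \IZ G$ yields an isomorphism
\[
B \oplus K_1(i) \oplus j_+ \oplus j_- \colon K_0(\IZ G) \oplus K_1(\IZ G) \oplus \NK_1(\IZ G) \oplus \NK_1(\IZ G) \xrightarrow{\cong} K_1(\IZ[G\times\IZ]),
\]
in which each of the four structure maps is injective and the four images form an internal direct sum. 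Write $T \subseteq K_1(\IZ[G\times\IZ])$ for the subgroup generated by the trivial units, so that $\Wh(G\times\IZ) = K_1(\IZ[G\times\IZ])/T$, and write $T_G \subseteq K_1(\IZ G)$ for the subgroup generated by the trivial units of $\IZ G$, so that $\Wh(G) = K_1(\IZ G)/T_G$.

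The heart of the argument is to locate $T$ inside the above direct sum. A trivial unit of $\IZ[G\times\IZ]$ has the form $\epsilon\cdot g\cdot t^n$ with $\epsilon\in\{\pm 1\}$, $g\in G$, $n\in\IZ$. Since right multiplication satisfies $r_{ab} = r_b\circ r_a$, the composition formula of Definition~\ref{def:K_1(S)} gives $[r_{\epsilon g t^n}] = [r_{\epsilon g}] + n\cdot[r_t]$ in $K_1(\IZ[G\times\IZ])$. Here $[r_t] = B([\IZ G])$ by the definition of $B$ with $P = R$, and $[r_{\epsilon g}] = K_1(i)([r_{\epsilon g}])$, where $[r_{\epsilon g}]$ is a generator of $T_G$. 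Letting the trivial units vary --- and specializing to $n=1$, $g=1$, $\epsilon=1$ to capture $B([\IZ G])$, and to $n=0$ to capture every $K_1(i)([r_{\epsilon g}])$ --- one concludes
\[
T \;=\; \IZ\cdot B([\IZ G]) \;\oplus\; K_1(i)(T_G) \;\oplus\; 0 \;\oplus\; 0
\]
as a subgroup of the decomposition above. This is the step I expect to demand the most care: one must be certain that trivial units contribute nothing to the two $\NK_1$-summands, and that is exactly what the composition-formula identity together with the directness of the decomposition provides.

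Finally I would take the quotient summand by summand. Since $B$ is injective and carries the subgroup of $K_0(\IZ G)$ generated by the finitely generated free modules, namely $\IZ\cdot[\IZ G]$, isomorphically onto $\IZ\cdot B([\IZ G])$, the first summand of the quotient is $K_0(\IZ G)/\IZ\cdot[\IZ G] = \widetilde{K}_0(\IZ G)$; since $K_1(i)$ is injective, the second summand is $K_1(\IZ G)/T_G = \Wh(G)$; and since $j_\pm$ are injective, the two $\NK_1(\IZ G)$-summands survive untouched. Hence the displayed isomorphism descends to the asserted isomorphism
\[
\overline{B}\oplus\Wh(i)\oplus\overline{j_+}\oplus\overline{j_-}\colon \widetilde{K}_0(\IZ G)\oplus\Wh(G)\oplus\NK_1(\IZ G)\oplus\NK_1(\IZ G)\xrightarrow{\cong}\Wh(G\times\IZ),
\]
and naturality in $G$ is inherited from the naturality in $R$ in Theorem~\ref{the:Bass-Heller-Swan_decomposition_for_K_1} together with the evident naturality of forming $\widetilde{K}_0$ and $\Wh$.
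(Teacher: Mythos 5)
Your proof is correct and fills in exactly the verification the paper leaves to the reader (the paper simply states that "one easily checks" the implication). The key step of identifying the subgroup $T$ generated by trivial units inside the four-term direct sum via the composition formula, and then observing that quotienting kills precisely $\IZ\cdot B([\IZ G])$ in the $K_0$-summand and $K_1(i)(T_G)$ in the $K_1$-summand while leaving the $\NK_1$-summands intact, is the natural and intended argument.
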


 The proof of the next result can be found for instance
    in~\cite[Exercise~3.2.25 on page~152]{Rosenberg(1994)} 
    or~\cite[Corollary~16.5 on page~226]{Swan(1968)}.

\begin{theorem}[Bass-Heller-Swan decomposition for $K_1$ for regular rings]%
\label{the:Bass-Heller-Swan_decomposition_for_K_1_for_regular_rings}
Suppose that $R$ is  regular. Then we get
\[
\widetilde{\Nil}_0(R) = \NK_1(R) =0,
\]
and the Bass-Heller-Swan decomposition
of~Theorem~\ref{the:Bass-Heller-Swan_decomposition_for_K_1} reduces
to the isomorphism
\[
B \oplus i_* \colon K_0(R) \oplus K_1(R) \xrightarrow{\cong}
K_1(R[t,t^{-1}]).
\]
\end{theorem}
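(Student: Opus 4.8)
The plan is to deduce everything from the Bass--Heller--Swan decomposition of Theorem~\ref{the:Bass-Heller-Swan_decomposition_for_K_1}. Since the map $N\colon \widetilde{\Nil}_0(R)\xrightarrow{\cong}\NK_1(R)$ occurring there is an isomorphism, it is enough to prove $\widetilde{\Nil}_0(R)=0$; then $\NK_1(R)=0$ follows, and in the third isomorphism of Theorem~\ref{the:Bass-Heller-Swan_decomposition_for_K_1} the two $\NK_1(R)$-summands drop out, leaving exactly the asserted isomorphism $B\oplus i_*\colon K_0(R)\oplus K_1(R)\xrightarrow{\cong}K_1(R[t,t^{-1}])$ with $i_*=K_1(i)$. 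Now $\widetilde{\Nil}_0(R)$ is, by definition, the cokernel of $\iota\colon K_0(R)\to\Nil_0(R)$, $[P]\mapsto[0\colon P\to P]$, and $\Nil_0(R)$ is generated by the classes $[f\colon P\to P]$ of nilpotent endomorphisms of finitely generated projective $R$-modules; so the whole statement reduces to showing that each such $[f\colon P\to P]$ lies in the image of $\iota$.

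To prove this I would pass to the larger abelian category $\mathbf{Nilmod}(R)$ of pairs $(M,g)$ with $M$ a finitely generated $R$-module (no longer required to be projective) and $g$ a nilpotent endomorphism, with the exact structure inherited from $R$-modules; this uses that $R$ is Noetherian. It contains the exact subcategory $\mathbf{Nil}(R)$ of those pairs with $M$ projective, for which $K_0(\mathbf{Nil}(R))=\Nil_0(R)$ by construction. Two observations drive the argument. First, every $(M,g)$ carries the finite filtration $0=\ker g^0\subseteq\ker g^1\subseteq\cdots\subseteq\ker g^N=M$, and since $g(\ker g^{k+1})\subseteq\ker g^k$, each successive quotient has the form $(\ker g^{k+1}/\ker g^k,0)$, i.e., carries the zero endomorphism. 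Second, because $R$ is regular, every finitely generated $R$-module admits a finite resolution by finitely generated projective $R$-modules; adjoining zero endomorphisms turns such a resolution into a resolution of $(M',0)$ inside $\mathbf{Nilmod}(R)$ by objects of $\mathbf{Nil}(R)$, and combining this with the first observation (the class of objects admitting a finite $\mathbf{Nil}(R)$-resolution is closed under extensions) every object of $\mathbf{Nilmod}(R)$ has such a resolution. The $K_0$-version of the resolution theorem --- an Euler-characteristic argument --- then shows that the inclusion $\mathbf{Nil}(R)\hookrightarrow\mathbf{Nilmod}(R)$ induces an isomorphism on $K_0$ whose inverse sends $[(M,g)]$ to $\sum_i(-1)^i[g_i\colon P_i\to P_i]$ for any $\mathbf{Nil}(R)$-resolution. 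Applying this to $(P,f)$ through the filtration of the first observation, and resolving each zero-endomorphism subquotient $(\ker f^{k+1}/\ker f^k,0)$ by finitely generated projectives with zero endomorphism, expresses $[f\colon P\to P]$ as a sum of classes of the form $\iota(\,\cdot\,)$; hence $[f\colon P\to P]$ lies in the image of $\iota$, so it vanishes in $\widetilde{\Nil}_0(R)=\coker\iota$. Since the $[f\colon P\to P]$ generate $\Nil_0(R)$, this gives $\widetilde{\Nil}_0(R)=0$, and the theorem follows as in the first paragraph.

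The crux is this resolution-theorem step: one has to check that $\Nil_0(R)$-classes really are computed by resolutions inside the larger category $\mathbf{Nilmod}(R)$ by pairs carrying a compatible nilpotent endomorphism, and keep track of how the forgetful map to $K_0(R)$ behaves under it. Regularity (finite global dimension) enters exactly here and cannot be dropped: over a ring of infinite global dimension the syzygy subquotients $\ker f^{k+1}/\ker f^k$ need not have finite projective dimension, and a naive induction on the nilpotency degree fails already because the kernel and the image of a nilpotent $f\colon P\to P$ need not be projective (the excerpt has already flagged that such hypotheses are essential). An alternative route, presumably the one taken in the cited references, is to first promote regularity of $R$ to regularity of $R[t]$ --- Hilbert's basis theorem for the Noetherian condition and $\operatorname{gl.dim}R[t]=\operatorname{gl.dim}R+1$ for the homological side --- and then to invoke homotopy invariance $K_n(R)\xrightarrow{\cong}K_n(R[t])$, equivalently $\NK_n(R)=0$; this carries the same content, packaged through $G$-theory and d\'evissage.
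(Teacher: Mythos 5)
The paper itself gives no proof of this theorem, referring instead to Rosenberg and to Swan. Your argument is correct and is, in substance, the standard one that appears in such references. The reduction is clean: by the naturality of $N$ in Theorem~\ref{the:Bass-Heller-Swan_decomposition_for_K_1}, everything comes down to $\widetilde{\Nil}_0(R)=0$, i.e., to surjectivity of $\iota\colon K_0(R)\to\Nil_0(R)$ (injectivity being automatic from the forgetful retraction). Your d\'evissage-plus-resolution argument handles this correctly: identifying $\Nil_0(R)$ with $K_0$ of the exact category $\mathbf{Nil}(R)$ of nilpotent endomorphisms of finitely generated projectives, embedding it into the abelian category $\mathbf{Nilmod}(R)$ of nilpotent endomorphisms of finitely generated modules (this is a Serre subcategory of finitely generated $R[t]$-modules, so it is indeed abelian, and Noetherianness keeps all the kernels $\ker f^k$ finitely generated), and then filtering by $\ker f^k$ so that the graded pieces carry the zero endomorphism and can be resolved by projectives with zero endomorphism, with regularity providing the finite resolutions. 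The $K_0$-version of the Resolution Theorem is exactly the right tool; the hypotheses of that theorem hold here because $\mathbf{Nil}(R)$ is closed under extensions and under kernels of admissible epimorphisms in $\mathbf{Nilmod}(R)$ (any short exact sequence of $R$-modules with projective quotient splits). Your closing remark about the alternative route --- promoting regularity to $R[t]$ via the Hilbert basis and syzygy theorems and invoking $G$-theoretic homotopy invariance --- is also correct and is the version some of the cited sources adopt; it buys uniformity across all $n$ at the cost of importing Quillen's machinery, whereas your direct Euler-characteristic argument is more elementary and entirely sufficient in degrees $0$ and $1$.
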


The next theorem is a consequence of
Theorem~\ref{the:Bass-Heller-Swan_decomposition_for_K_1}.

\begin{theorem}[Fundamental Theorem of $K$-theory in dimension $1$]%
 \label{the:Fundamental_Theorem_of_K-theory_in_dimension_1}
  There is a  sequence which is natural in $R$ and exact  
\begin{multline*}
  0 \to K_1(R) \xrightarrow{K_1(k_+) \oplus -K_1(k_-)} K_1(R[t]) \oplus K_1(R[t^{-1}])
  \\
  \xrightarrow{K_1(l_+)_* \oplus  K_1(l_-)}  K_1(R[t,t^{-1}])  \xrightarrow{C} K_0(R) \to 0,
\end{multline*}
where $k_+$, $k_-$, $l_+$, and $l_-$ are the obvious inclusions.

If we regard it as an acyclic $\IZ$-chain complex, there exists a chain contraction, natural in $R$.
\end{theorem}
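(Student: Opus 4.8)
The plan is to deduce the statement purely formally from the three Bass--Heller--Swan isomorphisms packaged in Theorem~\ref{the:Bass-Heller-Swan_decomposition_for_K_1}, applied to the rings $R[t]$, $R[t^{-1}]$ and $R[t,t^{-1}]$. First I would use the ring isomorphism $R[t] \xrightarrow{\cong} R[t^{-1}]$, $t\mapsto t^{-1}$, to transport the decomposition for $R[t]$ to one for $R[t^{-1}]$. In the resulting coordinates the four terms of the claimed sequence become $K_1(R)$; then $\NK_1(R)^{\oplus 2}\oplus K_1(R)^{\oplus 2}$, obtained by combining the decompositions of $K_1(R[t])$ and $K_1(R[t^{-1}])$; then $K_0(R)\oplus K_1(R)\oplus\NK_1(R)^{\oplus 2}$ via the isomorphism $B\oplus K_1(i)\oplus j_+\oplus j_-$, the two $\NK_1(R)$-summands being the images of $j_+$ and of $j_-$; and finally $K_0(R)$.

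The second step is to rewrite the three maps of the sequence in these coordinates. Since $k_{\pm}\colon R\to R[t^{\pm 1}]$ are split by $\ev_0$ and land in the $K_1(R)$-summand, the first map becomes $x\mapsto (0,0,x,-x)$, the sign being the one appearing in $K_1(k_+)\oplus -K_1(k_-)$. For the middle map I would invoke the composition identities $l_+\circ k_+ = l_-\circ k_- = i$ together with the very definitions $j_{\pm} = K_1(l_{\pm})\circ j$ and $K_1(l_{\pm})\circ K_1(i') = K_1(i)$ (and their $R[t^{-1}]$-analogues): these show that $K_1(l_+)$ carries the summand $\NK_1(R)\oplus K_1(R)$ of $K_1(R[t])$ identically onto the $j_+$-copy of $\NK_1(R)$ and onto $K_1(R)$, with vanishing component in $K_0(R)$ --- the $K_0(R)$-summand is detected by the Bass map $B$, which is not met by $K_1(l_+)$ --- and symmetrically for $K_1(l_-)$. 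Hence the middle map becomes $(\nu_+,\nu_-,x_1,x_2)\mapsto (0,\,x_1+x_2,\,\nu_+,\,\nu_-)$. Finally $C$ is, in these coordinates, simply the projection onto the $K_0(R)$-summand (equivalently, it is pinned down by $C\circ B = \id_{K_0(R)}$ and $C\circ K_1(l_{\pm}) = 0$, the latter being automatic since those images must be annihilated by $C$).

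Once the sequence is in this completely explicit form it splits as a direct sum of three acyclic strands: a strand $0\to K_1(R)\xrightarrow{(1,-1)} K_1(R)^{\oplus 2}\xrightarrow{(1,1)} K_1(R)\to 0$, a strand $0\to\NK_1(R)^{\oplus 2}\xrightarrow{\id}\NK_1(R)^{\oplus 2}\to 0$, and a strand $0\to K_0(R)\xrightarrow{\id} K_0(R)\to 0$. Each is obviously acyclic, which gives exactness, and each carries an obvious contraction; reassembling them yields an explicit chain contraction $s$ of the total complex $0\to K_1(R)\to\cdots\to K_0(R)\to 0$, namely $k\mapsto(k,0,0,0)$ on $K_0(R)$, then $(k,x,\nu_+,\nu_-)\mapsto(\nu_+,\nu_-,0,x)$ on $K_0(R)\oplus K_1(R)\oplus\NK_1(R)^{\oplus 2}$, and $(\nu_+,\nu_-,x_1,x_2)\mapsto x_1$ on $\NK_1(R)^{\oplus 2}\oplus K_1(R)^{\oplus 2}$; one checks the identity $ds+sd=\id$ summand by summand. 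Naturality in $R$ of both the sequence and the contraction $s$ is inherited from the naturality of all three Bass--Heller--Swan isomorphisms asserted in Theorem~\ref{the:Bass-Heller-Swan_decomposition_for_K_1}.

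I expect the only real difficulty to be bookkeeping rather than substance: keeping the signs straight and, above all, keeping the two $\NK_1(R)$-summands of $K_1(R[t,t^{-1}])$ apart, since both are indexed by the single abelian group $\NK_1(R)$ yet enter through the two different inclusions $l_+$ and $l_-$ --- equivalently, one is the image of the Nil-group ``for $t$'' and the other of the Nil-group ``for $t^{-1}$'' under the identification $R[t]\cong R[t^{-1}]$. Verifying that this identification is compatible with the Bass--Heller--Swan splitting of $K_1(R[t^{-1}])$ is the one place where genuine care is needed; everything else is the formal algebra of contracting an acyclic complex of direct sums.
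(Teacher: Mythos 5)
Your proposal is correct and takes exactly the route the paper intends: the paper simply asserts that Theorem~\ref{the:Fundamental_Theorem_of_K-theory_in_dimension_1} is a consequence of the Bass--Heller--Swan decomposition (Theorem~\ref{the:Bass-Heller-Swan_decomposition_for_K_1}), and you have supplied precisely the missing bookkeeping, including a correct explicit contraction with $(ds+sd)=\id$ verified degree by degree.
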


%-----------------------------------------------------------------------------

\subsection{Negative $K$-groups}\label{subsec:Negative_K-goups}

Recall that we get from
Theorem~\ref{the:Fundamental_Theorem_of_K-theory_in_dimension_1} an
isomorphism
\[
K_0(R) = \coker\left(K_1(R[t]) \oplus K_1(R[t^{-1}])\to K_1(R[t,t^{-1}])\right).
\]
This motivates the following definition of negative $K$-groups due to
Bass.

\begin{definition}\label{def:negative_K-theory}%
Given a ring $R$, define inductively for
  $n = -1, -2, \ldots$
\[
K_n(R)
:= \coker\left(K_{n+1}(R[t]) \oplus K_{n+1}(R[t^{-1}])
\to K_{n+1}(R[t,t^{-1}])\right).
\]
Define for $n = -1, -2, \ldots$
\[
\NK_n(R)
:= \coker\left(K_n(R) \to K_n(R[t])\right).
\]
\end{definition}

With this definition one obtains the following  obvious extension of
Theorem~\ref{the:Bass-Heller-Swan_decomposition_for_K_1} to  lower $K$-theory.

\begin{theorem}[Bass-Heller-Swan decomposition for middle and lower $K$-theory]%
\label{the:Bass-Heller-Swan-decomposition_for_middle_and_lower_K-theory}%
  There  are  isomorphisms of abelian groups, natural in $R$, for 
  $n = 1, 0, -1,  -2, \ldots $
  \begin{eqnarray*}
    & \NK_n(R) \oplus K_n(R)  \xrightarrow{\cong}  K_n(R[t]);&
    \\
    &K_n(R) \oplus K_{n-1}(R) \oplus \NK_n(R) \oplus \NK_n(R)
    \xrightarrow{\cong}  K_n(R[t,t^{-1}]).&
  \end{eqnarray*}
  There is a  sequence which is natural in $R$ and exact  for $n = 1,0, -1, \ldots$
  \begin{multline*}
    0 \to K_n(R) \xrightarrow{K_n(k_+) \oplus -K_n(k_-)} K_n(R[t]) \oplus K_n(R[t^{-1}])
    \\
    \xrightarrow{K_n(l_+) \oplus K_n(l_-)_*} K_n(R[t,t^{-1}])  \xrightarrow{C_n} K_{n-1}(R) \to 0,
      \end{multline*}
  where $k_+$, $k_-$, $l_+$, and $l_-$ are the obvious inclusions.
  
   If we regard it as an acyclic $\IZ$-chain complex, there exists a chain contraction, natural in $R$.
 \end{theorem}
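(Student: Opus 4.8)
The plan is to deduce everything by downward induction on $n$, using the already-established case $n = 1$ (Theorem~\ref{the:Fundamental_Theorem_of_K-theory_in_dimension_1} together with Theorem~\ref{the:Bass-Heller-Swan_decomposition_for_K_1}) as the base case and Definition~\ref{def:negative_K-theory} as the recursive device. The key observation is that Definition~\ref{def:negative_K-theory} builds $K_{n-1}(R)$ out of $K_n$ applied to the three rings $R[t]$, $R[t^{-1}]$, $R[t,t^{-1}]$, so one can feed in the Bass-Heller-Swan splitting and exact sequence known in degree $n$ and extract the corresponding statements in degree $n-1$ by a diagram chase.

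First I would set up the inductive hypothesis: assume the two splitting isomorphisms and the natural exact sequence (with natural chain contraction) hold for $K_n$. For the exact sequence in degree $n-1$, I would take the natural short exact sequence defining $K_{n-1}(R)$ as a cokernel and splice it together with the degree-$n$ exact sequences applied to $R$, $R[t]$, $R[t^{-1}]$, $R[t,t^{-1}]$; the point is that the contracting homotopy in degree $n$ makes the relevant complexes split, so the snake-type argument producing exactness of
\[
0 \to K_{n-1}(R) \to K_{n-1}(R[t]) \oplus K_{n-1}(R[t^{-1}]) \to K_{n-1}(R[t,t^{-1}]) \xrightarrow{C_{n-1}} K_{n-2}(R) \to 0
\]
goes through, and the chain contraction in degree $n$ induces one in degree $n-1$. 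For the splitting isomorphisms in degree $n-1$, I would apply the degree-$n$ splittings of $K_n$ of the variable rings $R[t]$, $R[t^{-1}]$, $R[t,t^{-1}]$ (i.e.\ iterate the Laurent/polynomial construction in a second variable $s$), then read off the summand of $K_{n-1}$ corresponding to the $B$-map or the $\NK$-terms; here one uses that $\NK_{n-1}(R) = \coker(K_{n-1}(R) \to K_{n-1}(R[t]))$ is a direct summand because the evaluation map $\ev_0$ splits $i'$. Naturality in $R$ is automatic since every map in sight — the inclusions $k_\pm$, $l_\pm$, the Bass map $B$, the connecting map $C$ — is defined functorially in $R$.

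The main obstacle I expect is bookkeeping: keeping straight the two polynomial variables and making sure the various maps ($B$, the $j_\pm$, the $C_n$) are the canonically-intended ones under the identifications, so that the inductive step reproduces exactly the same structure it consumes (rather than a merely abstractly isomorphic one). A secondary subtlety is verifying that the chain contraction can genuinely be chosen naturally at each stage and not just at the base: this needs the degree-$n$ contraction to be built from natural maps so that applying $\coker$ of a natural split injection preserves it. Once the naturality of the degree-$1$ contraction from Theorem~\ref{the:Fundamental_Theorem_of_K-theory_in_dimension_1} is in hand, the induction carries it along without further difficulty.
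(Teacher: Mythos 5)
Your proposal is correct and is the standard downward-induction argument (essentially Bass's) that the paper's phrase ``one obtains the following obvious extension of Theorem~\ref{the:Bass-Heller-Swan_decomposition_for_K_1}'' tacitly invokes, with Definition~\ref{def:negative_K-theory} as the recursive device. One point worth sharpening in the write-up: you do not actually need a snake-type chase, because the degree-$n$ sequence carries a \emph{natural} chain contraction, so the termwise cokernel complex along the $t$-variable (after identifying $R[t][s]$ with $R[s][t]$, etc.) is automatically contractible with the induced contraction, and this single observation simultaneously delivers exactness in degree $n-1$, the natural contraction in degree $n-1$, and --- since cokernels commute with the natural direct-sum decompositions --- both splitting isomorphisms.
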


%-----------------------------------------------------------------------------

\subsection{The bounded $s$-Cobordism Theorem}\label{subsec:The_bounded_s-Cobordism_Theorem}

Next we consider \emph{manifolds $W$ parametrized over} $\IR^k$, i.e., manifolds that are
equipped with a surjective proper map $p \colon W \to \IR^k$. Then there is the notion of
a \emph{bounded h-cobordism}. We state without giving further explanations the following
result whose proofs can be found in~\cite{Pedersen(1986)}
and~\cite[Appendix]{Weiss-Williams(1988)} and which contains the $s$-Cobordism
Theorem~\ref{the:s-Cobordism_Theorem} as a special case.

\begin{theorem}[Bounded $s$-Cobordism Theorem]\label{the:bounded_s-cobordism_theorem}
    Suppose that $M_0$ is a connected manifold 
  parametrized over $\IR^k$ satisfying  $\dim M_0 \geq 5$. Let $\pi$
  be its fundamental group.

  The  equivalence classes of bounded
  $h$-cobordisms over $M_0$ modulo bounded diffeomorphism relative $M_0$
  correspond bijectively to elements in $\kappa_{1-k} (\pi)$ where
\[
\kappa_{1-k} ( \pi ) =
\begin{cases} \Wh (\pi ) & \mbox{if } k=0;
  \\
  \widetilde{K}_0 ( \IZ \pi ) & \mbox{if } k=1;
  \\
  K_{1-k} ( \IZ \pi ) & \mbox{if } k \geq 2.
\end{cases}
\]
\end{theorem}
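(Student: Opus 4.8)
The plan is to split the assertion into a geometric part and an algebraic part. The geometric part is a bounded refinement of the classical $s$-Cobordism Theorem~\ref{the:s-Cobordism_Theorem}: one first attaches to every bounded $h$-cobordism $W$ over $M_0$ a \emph{bounded Whitehead torsion} $\tau(W)$, living in the $K_1$-group of the additive category of finitely generated free $\IZ\pi$-modules parametrized over $\IR^k$ with morphisms of bounded displacement (divided, for $k=0$, by the trivial units, and, for $k=1$, by the classes of the standard free handles). One then proves the \emph{bounded $s$-Cobordism Theorem in terms of $\tau$}: $W$ is bounded diffeomorphic relative $M_0$ to the product $M_0 \times [0,1]$ if and only if $\tau(W)$ vanishes, and every value of $\tau$ is realized by some bounded $h$-cobordism over $M_0$. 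The algebraic part is then the identification of that bounded $K_1$-group with $\kappa_{1-k}(\pi)$.

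For the geometric part, I would first equip $W$ with a handle decomposition relative to $\partial_0 W = M_0$ whose handles have uniformly bounded size when measured through the proper map $p\colon W \to \IR^k$; such a bounded Morse function exists by transversality arguments carried out with control. Using the bounded analogues of handle trading and of the Whitney trick --- this is exactly where the hypothesis $\dim M_0 \ge 5$ enters, just as in the compact case --- one cancels the handles of index $0$, $1$ and dually of the two top indices, reducing to a decomposition with handles only in two consecutive middle indices $q$ and $q+1$. The incidence matrix between these two families of handles is then an invertible matrix over the bounded additive category above, and its class, suitably normalized, \emph{is} $\tau(W)$. A bounded version of Whitehead's handle-cancellation argument shows that, if this class vanishes, then after stabilizing by trivially cancelling handle pairs one can remove all remaining handles by bounded handle slides and bounded Whitney moves, yielding the product structure; conversely, attaching handles according to a prescribed invertible bounded matrix realizes any class. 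This is the content of~\cite{Pedersen(1986)} and~\cite[Appendix]{Weiss-Williams(1988)}.

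For the algebraic part, the key input is the computation that the $K_1$ of finitely generated free $\IZ\pi$-modules parametrized over $\IR^k$ with bounded morphisms is isomorphic to $K_{1-k}(\IZ\pi)$. I would prove this by induction on $k$, writing the bounded-over-$\IR^k$ category as a germ-at-infinity construction over the bounded-over-$\IR^{k-1}$ category of objects of $\IZ\pi[t,t^{-1}]$-type, and then invoking the Bass--Heller--Swan decomposition of Theorem~\ref{the:Bass-Heller-Swan-decomposition_for_middle_and_lower_K-theory}: an Eilenberg-swindle in the new non-compact direction kills one free summand and both $\NK$-summands, leaving precisely the degree shift $K_n(\IZ\pi) \rightsquigarrow K_{n-1}(\IZ\pi)$. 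For $k \ge 2$ no further reduction is needed and the answer is exactly $K_{1-k}(\IZ\pi)$; for $k=1$ one obtains $K_0(\IZ\pi)$, and dividing by the classes of the standard free handles gives $\widetilde{K}_0(\IZ\pi)$; for $k=0$ one recovers $K_1(\IZ\pi)$ modulo trivial units, i.e.\ $\Wh(\pi)$, so that the whole statement degenerates to Theorem~\ref{the:s-Cobordism_Theorem}.

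I expect the main obstacle to be the bounded geometric handle calculus rather than the algebra: one must make the Whitney trick function with uniform control over the non-compact base $\IR^k$, and check that all the moves --- handle slides, births and deaths of cancelling pairs, finger moves --- can be performed boundedly, so that $\tau(W)$ is well defined and a complete obstruction. Once the bounded category is set up correctly, the delooping computation in the algebraic part is comparatively formal, since the Bass--Heller--Swan decomposition it relies on is already available in Theorem~\ref{the:Bass-Heller-Swan-decomposition_for_middle_and_lower_K-theory}.
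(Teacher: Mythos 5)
The paper does not prove this theorem; it explicitly states it without further explanation and refers to Pedersen~\cite{Pedersen(1986)} and the Appendix of Weiss--Williams~\cite{Weiss-Williams(1988)} for the proofs. So there is no in-paper argument to compare against, but your sketch does track the strategy of those references: a geometric half producing a bounded Whitehead torsion via bounded handle calculus (where $\dim M_0 \geq 5$ is used exactly as in the compact case to run the Whitney trick), and an algebraic half identifying $K_1$ of the bounded category over $\IR^k$ with $\kappa_{1-k}(\pi)$. The geometric half is described accurately, and you are right that the bounded handle manipulations are the substantive work.

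In the algebraic half, though, the mechanism you sketch is imprecise in a way that would need repair. The bounded category over $\IR^k$ is \emph{not} a germ-at-infinity construction over objects ``of $\IZ\pi[t,t^{-1}]$-type'': $K_n$ of the bounded-over-$\IR$ category equals $K_{n-1}(\IZ\pi)$, whereas $K_n(\IZ\pi[t,t^{-1}])$ has four summands by Theorem~\ref{the:Bass-Heller-Swan-decomposition_for_middle_and_lower_K-theory}, and the two categories are genuinely different (the Laurent ring corresponds rather to the $\IZ$-equivariant bounded category over $\IR$). The actual delooping is a Karoubi filtration/localization sequence on the bounded categories themselves --- objects supported in a bounded region sit inside the half-line category $\calc_{[0,\infty)}$, which is flasque by the Eilenberg swindle, and the cofiber gives the degree shift --- and one then has to separately compare this ``bounded'' delooping with Bass's iterated cokernel definition of $K_{1-k}$ from Definition~\ref{def:negative_K-theory}; this comparison is where Bass--Heller--Swan really enters, not as an ingredient of the germ construction. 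Finally, the claim that for $k \ge 2$ ``no further reduction is needed'' is asserted without justification: one must check that the geometric ambiguities (trivial units, stabilization by free handles) lie in the image of $K_i(\IZ\pi) \to K_1(\calc_{\IR^k}(\IZ\pi))$ for $i \ge 0$, and that this image vanishes by the swindle, so the normalization that is visible for $k=0,1$ becomes automatic for $k\ge 2$. These are repairable, but as written the algebraic step conflates two deloopings and skips the step that makes the $k\ge 2$ case clean.
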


%-----------------------------------------------------------------------------

\subsection{Conjecture about middle and negative $K$-groups for  torsionfree groups}%
\label{subsec:Conjecture_about_middle_and_negative_K_groups_for_torsionfree_groups}

\begin{conjecture}[The Farrell-Jones Conjecture for negative $K$-theory and regular
  coefficient rings and torsionfree groups]%
  \label{con:FJC_neg_K-Theory_and_regular_coefficient_rings_and_torsionfree_groups}
  Let $R$ be a regular ring and $G$ be a torsionfree group. Then we get
  \[
    K_n(RG) = 0 \quad \text{ for } \quad n \le -1.
  \]
\end{conjecture}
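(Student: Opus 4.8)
The plan is to establish this as a consequence of the Bass-Heller-Swan decomposition for lower $K$-theory, reducing the vanishing of negative $K$-groups of $RG$ to the vanishing of negative $K$-groups of $R$ together with an inductive argument in the dimension $k$ that controls the ``number of $\IZ$-factors'' one has stripped off. The key observation is that the Farrell-Jones Conjecture predicts, for torsionfree $G$ and regular $R$, that the assembly map $H_n^G(\eub{G};\bfK_R) \to K_n(RG)$ is an isomorphism, and that the source can be computed because $\eub{G} = EG$ when $G$ is torsionfree. So first I would set up the general machinery: the assembly map, the fact that for regular $R$ one has $K_n(R) = 0$ for $n \le -1$ and $\NK_n(R) = 0$ for all $n$ (the latter being the content of Theorem~\ref{the:Bass-Heller-Swan_decomposition_for_K_1_for_regular_rings} extended to lower degrees via Definition~\ref{def:negative_K-theory}), and the fact that the $\bfK$-theory spectrum functor $\bfK_R$ is connective-shifted in the sense that $\pi_n(\bfK_R(\pt)) = K_n(R)$ vanishes for $n \le -1$.

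The central step is then a spectral sequence or skeletal-filtration argument: the $G$-homology theory $H_*^G(-;\bfK_R)$ applied to $EG$ has an Atiyah-Hirzebruch-type spectral sequence with $E^2$-term $H_p(BG; K_q(R))$ converging to $H_{p+q}^G(EG;\bfK_R)$. Since $R$ is regular, $K_q(R) = 0$ for $q \le -1$, so all contributions with $q \le -1$ vanish; and since $p \ge 0$ always (group homology is concentrated in nonnegative degrees), the total degree $p + q$ can only be negative if $q \le -1$, which is impossible. Hence $H_n^G(EG;\bfK_R) = 0$ for $n \le -1$. Combining this with the Farrell-Jones isomorphism gives $K_n(RG) = 0$ for $n \le -1$. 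The honest content here is invoking that $G$ is a Farrell-Jones group in the relevant sense — but for the purposes of this survey one assumes the Full Farrell-Jones Conjecture (or at least the $K$-theoretic version with $\calvcyc$ replaced by the family of finite subgroups, which for torsionfree $G$ is the trivial family), so the assembly map one needs is precisely $H_n^G(EG;\bfK_R) \xrightarrow{\cong} K_n(RG)$.

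Alternatively, and perhaps more in the spirit of the surrounding section, one can argue by a direct induction using Theorem~\ref{the:Bass-Heller-Swan-decomposition_for_middle_and_lower_K-theory}: for a group of the form $H \times \IZ$, the decomposition expresses $K_n(R[H \times \IZ])$ in terms of $K_n(RH)$, $K_{n-1}(RH)$, and $\NK_n$-terms which vanish by regularity-type inputs. This handles poly-$\IZ$ groups directly and then one bootstraps to the general torsionfree Farrell-Jones group via the transitivity and inheritance properties of the conjecture. The main obstacle in either approach is the same: one must actually know the Farrell-Jones Conjecture (or its $K$-theoretic specialization) holds for $G$ — this is genuinely the deep input and is not proved here; everything else is the formal homological bookkeeping of checking that negative-degree contributions cannot appear once $K_{\le -1}(R) = 0$ and $\NK_*(R) = 0$. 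I expect the writing-up to be short precisely because all the weight is carried by the assembly isomorphism, which this survey takes as the organizing hypothesis rather than re-deriving.
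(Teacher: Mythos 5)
Your primary approach matches the paper's. This statement is an open conjecture, and the paper does not prove it; what the paper does (immediately after stating Conjecture~\ref{con:FJC_torsionfree_regular_higher_K-theory}) is sketch in one sentence that it follows from that stronger conjecture ``using the Atiyah-Hirzebruch spectral sequence and the fact that for a regular ring we have $K_n(R) = 0$ for $n \le -1$.'' Your spectral-sequence paragraph is precisely that argument spelled out: $E^2_{p,q} = H_p(BG;K_q(R))$ vanishes whenever $p+q \le -1$ because $p < 0$ kills group homology and $q \le -1$ kills $K_q(R)$ by regularity, so $H_n(BG;\bfK(R)) = 0$ for $n \le -1$, and the assembly isomorphism transports this to $K_n(RG)$. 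You also correctly identify that the deep input is the assembly isomorphism itself, which is taken as hypothesis.

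Two small corrections. First, $\NK_n(R) = 0$ for regular $R$ is not actually used in the spectral-sequence argument; only $K_{\le -1}(R) = 0$ is. The paper mentions $\NK_n(R) = 0$ in the same sentence because it is needed for some of the \emph{other} conjectures being deduced (e.g.\ the Whitehead group vanishing), not for this one. Second, your alternative Bass-Heller-Swan route has a genuine gap: the decomposition of $K_n(R[H \times \IZ])$ from Theorem~\ref{the:Bass-Heller-Swan-decomposition_for_middle_and_lower_K-theory} produces Nil-terms $\NK_n(RH)$, not $\NK_n(R)$, and $RH$ is essentially never regular even when $R$ is, so those terms do not vanish ``by regularity-type inputs.'' Their vanishing is itself a nontrivial consequence of the Farrell-Jones Conjecture, which makes that route circular as a shortcut. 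The spectral-sequence argument avoids this entirely and is the one to keep.
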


Combing Conjectures~\ref{con:K_0(ZG)_vanishes_for_torsionfree_G},~%
\ref{con:FHC_Farrell-Jones_Conjecture_for_Wh(G)_for_torsionfree_Groups},
and~\ref{con:FJC_neg_K-Theory_and_regular_coefficient_rings_and_torsionfree_groups} for $R = \IZ$ yields

\begin{conjecture}[The Farrell-Jones Conjecture for middle and negative $K$-theory  for torsionfree groups with integral coefficients]%
\label{con:The_Farrell-Jones_Conjecture_for_middle_and_negative_K-theory}
Let $G$ be a torsionfree group. Then $\Wh(G)$, $\widetilde{K}_0(\IZ G)$, and $K_n(\IZ G)$ for $n \le -1$ vanish.
\end{conjecture}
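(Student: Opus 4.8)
The plan is to observe that the three assertions of the statement are, one by one, exactly the special cases for $R = \IZ$ of the three conjectures cited immediately before it, so that no genuinely new argument is required beyond checking that $\IZ$ meets the hypotheses in question. Concretely, I would unpack the statement into its three constituent claims --- the vanishing of $\Wh(G)$, of $\widetilde{K}_0(\IZ G)$, and of $K_n(\IZ G)$ for $n \le -1$ --- and match each to its source.

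First I would record that the vanishing of $\Wh(G)$ for torsionfree $G$ is verbatim the content of Conjecture~\ref{con:FHC_Farrell-Jones_Conjecture_for_Wh(G)_for_torsionfree_Groups}, and that the vanishing of $\widetilde{K}_0(\IZ G)$ for torsionfree $G$ is verbatim the content of Conjecture~\ref{con:K_0(ZG)_vanishes_for_torsionfree_G}. Hence nothing remains to be shown for these two claims. Next I would treat the negative $K$-groups: here the point is that $\IZ$ is a regular ring, since it is Noetherian and, being a principal ideal domain, has the property that every module admits a projective resolution of length at most one, in particular a finite-dimensional one. Therefore Conjecture~\ref{con:FJC_neg_K-Theory_and_regular_coefficient_rings_and_torsionfree_groups}, applied with $R = \IZ$ and the given torsionfree group $G$, yields $K_n(\IZ G) = 0$ for all $n \le -1$. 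Assembling the three observations proves the statement.

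The only thing to watch is the verification that $\IZ$ falls under the hypotheses of Conjecture~\ref{con:FJC_neg_K-Theory_and_regular_coefficient_rings_and_torsionfree_groups}, and this is routine. So I do not expect any real obstacle at this level: the entire substance sits in the three cited conjectures, which in turn follow from the Farrell-Jones Conjecture and are established for the class $\calfj$ discussed later in the survey; the present statement is merely their common specialization to integral coefficients.
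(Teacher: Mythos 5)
Your proposal is correct and matches the paper's own treatment exactly: the paper simply states that combining Conjectures~\ref{con:K_0(ZG)_vanishes_for_torsionfree_G}, \ref{con:FHC_Farrell-Jones_Conjecture_for_Wh(G)_for_torsionfree_Groups}, and~\ref{con:FJC_neg_K-Theory_and_regular_coefficient_rings_and_torsionfree_groups} for $R = \IZ$ yields the statement, which is precisely your decomposition, including the (routine) observation that $\IZ$ is regular.
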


%%%%%%%%%%%%%%%%%%%%%%%%%%%%%%%%%%%%%%%%%%%%%%%%%%%%%%%%%%%%%%%%%%%%%
%%%%%%%%%%%%%%%%%%%% Section 5: Higher algebraic $K$-theory %%%%%%%%%%%%%%%%%%%%%
%%%%%%%%%%%%%%%%%%%%%%%%%%%%%%%%%%%%%%%%%%%%%%%%%%%%%%%%%%%%%%%%%%%%%

\typeout{---------- Section 5:  Higher algebraic $K$-theory    ---------------}

\section{Higher algebraic $K$-theory}\label{sec:Higher_algebraic_K-theory}

Let $S$ be a ring. So far the algebraic $K$-groups $K_n(S)$ for $n \le 1$ have been
described in a purely algebraic fashion by generators and relations. This is also possible
for $K_2(S)$. The definition of the higher algebraic $K$-groups $K_n(S)$ for $n \ge 3$ has
been achieved topologically, namely, one assigns to a ring $S$ a space $K(S)$ and defines
$K_n(S)$ by the $n$-th homotopy group $\pi_n(K(S))$ for $n \ge 0$.  This will coincide
with the previous definition for $n = 0,1$. There are various definitions of the space
$K(S)$ that extend to more general settings, for instance the plus-construction and the
$Q$-construction for exact categories of Quillen, or the  $S_{\bullet}$-construction due to Waldhausen for
categories with cofibrations and weak equivalences. Algebraic $K$-groups of rings have some important useful
properties, such as compatibility with finite products, Morita equivalence, and certain
theorems such as the Resolution Theorem, the Devissage Theorem, the Localization Theorem,
the Cofinality Theorem, the Fibering Theorem, and the Approximation Theorem hold.

%-----------------------------------------------------------------------------

\subsection{The non-connective $K$-theory spectrum}%
\label{subsec:The_non-connective_K-theory_spectrum}

It is necessary  for the Farrell-Jones Conjecture to take into
account the negative $K$-groups as well. Therefore we will consider the non-connective
$K$-theory spectrum $\bfK(S)$ of a ring $S$ as constructed for instance by
Pedersen-Weibel~\cite{Pedersen-Weibel(1985)}, Schlichting~\cite{Cardenas-Pedersen(1997)},
or L\"uck-Steimle~\cite{Lueck-Steimle(2014delooping)} and define for $n \in \IZ$
\begin{eqnarray}
  K_n(S) = \pi_n(\bfK(S)).
  \label{K_n(S)_for_n_in_Z}
\end{eqnarray}
Of course this is consistent with the previous definitions.
Moreover, the Bass-Heller-Swan decomposition for middle and lower $K$-theory, see
Theorem~\ref{the:Bass-Heller-Swan-decomposition_for_middle_and_lower_K-theory},
extends to all $n \in \IZ$.

\begin{remark}[Glimpse of a homological behavior of $K$-theory]%
  \label{rem:Glimpse_of_a_homological_behavior_of_K-theory}
  In the case that $R$ is a regular ring, the Bass-Heller-Swan decomposition shades some
  homological flavour on $K$-theory.  Namely, observe the analogy between the two formulas
  \begin{eqnarray*}
    K_n(R[\IZ]) & \cong & K_{n-1}(R[\{1\}]) \oplus K_n(R[\{1\}]);
    \\
    H_n(B\IZ;A) & \cong & H_{n-1}(B\{1\};A) \oplus H_n(B\{1\};A),
  \end{eqnarray*}
  where in the second line we consider group homology with coefficients in some abelian
  group $A$, which corresponds to the role of $R$ in the first line. Analogously one gets
  for two torsionfree groups $G_0$ and $G_1$ and a regular ring $R$
  \begin{eqnarray*}
    \widetilde{K}_n(R[G_0 \ast G_1]) & \cong & \widetilde{K}_{n}(R[G_0]) \oplus \widetilde{K}_n(R[G_1]);
    \\
    \widetilde{H}_n(B(G_0 \ast G_1);A) & \cong & \widetilde{H}_n(BG_0;A) \oplus \widetilde{H}_n(BG_1;A),
  \end{eqnarray*}
  where $\widetilde{K}_n(RG)$ is the cokernel of $K_n(R) = K_n(R[\{1\}]) \to K_n(RG)$ and
  $\widetilde{H}_n(BG;A)$ is the cokernel of
  $\widetilde{H}_n(B\{1\};A) = H_n(\pt,A) \to H_n(BG;A)$.

Here and elsewhere  $\pt$ denotes the space consisting of one point. The space $BG$
is the \emph{classifying space of the group $G$},
which  is  up to homotopy  characterized by the property that it is a 
$CW$-complex with $\pi_1(BG) \cong G$ whose universal covering
is contractible. 
\end{remark}

%-----------------------------------------------------------------------------

\subsection{Conjecture about $K$-groups for  torsionfree groups and regular rings}%
\label{subsec:Conjecture_about_K-groups_for_torsionfree_groups_and_regular_rings}

We have already explained in
Remark~\ref{rem:Glimpse_of_a_homological_behavior_of_K-theory} that the assignment sending
a torsionfree group $G$ to $K_n(RG)$ for a regular ring has some homological behaviour.
This leads to the next conjecture.

\begin{conjecture}[Farrell-Jones Conjecture for $K$-theory for torsionfree groups and regular rings]%
\label{con:FJC_torsionfree_regular_higher_K-theory}%
Let $G$ be a torsionfree group. Let $R$ be a regular
  ring.  Then the so-called \emph{assembly map}
  \[
  H_n(BG;\bfK (R)) \to K_n(RG)
  \]
  is an isomorphism for $n \in \IZ$.
\end{conjecture}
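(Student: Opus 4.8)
The plan is to deduce this from the general Farrell--Jones Conjecture for $K$-theory and then to prove the latter by controlled-algebra methods. The general conjecture asserts that the assembly map $H^G_n(E_{\calvcyc}G;\bfK_R) \to H^G_n(\pt;\bfK_R) = K_n(RG)$ induced by the projection $E_{\calvcyc}G \to \pt$ is an isomorphism for all $n\in\IZ$, where $E_{\calvcyc}G$ is the classifying space for the family $\calvcyc$ of virtually cyclic subgroups and $\bfK_R$ is the associated $\Or(G)$-spectrum. First I would carry out two reductions of the family. Since $G$ is torsionfree, every finite subgroup is trivial, so $E_{\calfin}G$ is a model for $EG$; as $G$ acts freely on $EG$ with quotient $BG$, equivariant homology becomes ordinary homology and $H^G_n(EG;\bfK_R) \cong H_n(BG;\bfK(R))$. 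Next I would compare the $\calfin$- and the $\calvcyc$-assembly maps: the relative term is assembled from the Bass--Heller--Swan/Nil contributions of the infinite virtually cyclic subgroups, and these are all infinite cyclic because a torsionfree group with a finite-index infinite cyclic subgroup is infinite cyclic; for a regular ring $R$ the relevant Nil-terms $\NK_n(R)$ vanish for all $n\in\IZ$ by the higher analogue of Theorem~\ref{the:Bass-Heller-Swan_decomposition_for_K_1_for_regular_rings} together with Theorem~\ref{the:Bass-Heller-Swan-decomposition_for_middle_and_lower_K-theory} extended to all degrees. Hence the $\calvcyc$- and $\calfin$-assembly maps have isomorphic sources and identical targets, so the desired statement is equivalent to the general conjecture.

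It then remains to establish the general conjecture, and here the scheme is the standard two-part one. First one invokes the Transitivity Principle: if $G$ admits a sufficiently controlled action on a suitable space with isotropy in a smaller family $\calf'$ for which the conjecture is already known, then the $\calvcyc$-version holds for $G$; this reduces matters to ``basic'' families. Second comes the geometric input: for hyperbolic groups one uses the action on the Rips complex together with its Gromov boundary and the associated flow space; for finite-dimensional CAT(0)-groups one uses the CAT(0)-geometry and the geodesic flow; similar constructions handle lattices, $S$-arithmetic groups, mapping class groups, and fundamental groups of $3$-manifolds via geometrization. In each case the technical heart is to show that $G$ is ``transfer reducible over $\calvcyc$'': for every $N$ one builds an equivariant open cover of a thickening of the flow space by sets of covering dimension $\le N$ with stabilizers in $\calvcyc$ that are ``long and thin'' along flow lines, and one feeds such covers, via transfer maps back to $G$, into the controlled-algebra obstruction category to kill the relative assembly term and thereby obtain both injectivity and surjectivity of the assembly map.

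To reach as large a class of torsionfree groups as possible, one then combines this with the inheritance properties of the conjecture: closure under passing to subgroups, to finite-index overgroups, to finite products, to free and amalgamated products, to extensions with virtually cyclic or (virtually) poly-cyclic quotient, and — crucially — to directed colimits with arbitrary structure maps. This yields the conjecture for the class $\calfj$ of Farrell--Jones groups, which already contains essentially all groups ``occurring in daily life''.

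The main obstacle is precisely the geometric input, and the honest status is that the statement is open for a general torsionfree group: the controlled-algebra argument only runs when one has a finite-dimensional space carrying a flow together with a ``long and thin'' equivariant cover of bounded dimension, and no construction of such data is known for an arbitrary group — producing it is exactly where curvature- or amenability-type hypotheses must enter. Even within the classes where the conjecture is proved, the substantive work is the construction of the flow space and the verification of the long-thin-cover (equivalently, an amenability) condition; by contrast the family reductions above, the Bass--Heller--Swan vanishing for regular coefficients, and the inheritance arguments are comparatively formal.
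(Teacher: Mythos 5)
You correctly recognize that this statement is a conjecture, not a theorem — the paper offers no proof of it, and it is open for a general torsionfree group. Your outline (reduce from $\calvcyc$ to the trivial family using the vanishing of the Nil-terms for regular $R$ and the fact that torsionfree virtually cyclic groups are trivial or infinite cyclic; deduce the statement from the Full Farrell--Jones Conjecture; and prove the latter for the class $\calfj$ via controlled algebra, flow spaces, transfers, and the inheritance properties) is exactly the picture the paper presents in Section~\ref{sec:The_Full_Farrell-Jones_Conjecture}, Section~\ref{sec:Methods_an_strategies_of_proofs}, and Theorem~\ref{the:status_of_the_Full_Farrell-Jones_Conjecture}.
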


Here $H_*(-;\bfK (R))$ denotes the homology theory 
that  is associated to the (non-connective) $K$-spectrum $\bfK (R)$. Recall that
$H_n(\pt;\bfK(R))$ is $K_n(R)$ for  $n \in \IZ$.

If one drops in Conjecture~\ref{con:FJC_torsionfree_regular_higher_K-theory} the condition
that $R$ is regular or the condition that $G$ is torsionfree, it becomes false. Later we
state a version of the Farrell-Jones Conjecture, where no conditions on $R$ or $G$ will
appear, see Section~\ref{sec:The_Full_Farrell-Jones_Conjecture}.

Using the Atiyah-Hirzebruch spectral sequence and the fact that for a regular
ring we have $K_n(R) = 0$ for $n \le -1$ and $\NK_n(R) = 0$ for $n \in \IZ$ and for a
principal ideal domain $R$ we get an isomorphism $\IZ \xrightarrow{\cong} K_0(R)$ by
sending $n \in \IZ$ to $n \cdot [R]$, one easily checks that
Conjecture~\ref{con:FJC_torsionfree_regular_higher_K-theory} implies all the previous conjectures,
namely Conjectures~\ref{con:K_0(ZG)_vanishes_for_torsionfree_G},~%
\ref{con:K_0(ZG)_vanishes_for_torsionfree_G_and_regular_R},~%
\ref{con:Idempotent_Conjecture},~\ref{con:Serre},~%
\ref{con:FHC_Farrell-Jones_Conjecture_for_Wh(G)_for_torsionfree_Groups},~%
\ref{con:FJC_neg_K-Theory_and_regular_coefficient_rings_and_torsionfree_groups},
and~\ref{con:The_Farrell-Jones_Conjecture_for_middle_and_negative_K-theory}.

%%%%%%%%%%%%%%%%%%%%%%%%%%%%%%%%%%%%%%%%%%%%%%%%%%%%%%%%%%%%%%%%%%%%%
%%%%%%%%%%%%%%%%%%%%%%%%% Section 6: Algebraic $L$-theory %%%%%%%%%%%%%%%%%%%%%
%%%%%%%%%%%%%%%%%%%%%%%%%%%%%%%%%%%%%%%%%%%%%%%%%%%%%%%%%%%%%%%%%%%%%

\typeout{---------- Section 6:  Algebraic $L$-theory    ---------------}

\section{Algebraic $L$-theory}\label{sec:Algebraic_L-theory}

%-----------------------------------------------------------------------------

\subsection{Algebraic $L$-groups}%
\label{subsec:Algebraic_L-groups}

A \emph{ring with involution} $S$ is an
associative ring $S$ with unit together with an \emph{involution of rings}
\[
\overline{\ }\colon  S \to S, \quad s \mapsto \overline{s},
\] 
i.e., a map satisfying $\overline{\overline{s}} = s$, $\overline{s_0 +s_1} =
\overline{s_0} + \overline{s_1}$, $\overline{s_0\cdot s_1} = \overline{s_1}\cdot
\overline{s_0}$, and $\overline{1} = 1$ for $s_0,s_1 \in S$.
If $S$ is commutative, we can equip it with the trivial involution $\overline{s} = s$.

Let $w\colon  G \to \{\pm 1\}$ be a group homomorphism.
Then the group ring $S = RG$ inherits an involution, the so-called \emph{$w$-twisted involution}
that sends $\sum_{g \in G} r_g \cdot g$ to  $\sum_{g \in G} w(g) \cdot \overline{r_g} \cdot g^{-1}$.

Below we fix a ring $S$ with involution. Module is to be understood as left module unless
explicitly stated otherwise. The main purpose of the involution is to ensure that the dual
of a left $S$-module can be viewed as a left $S$-module again.  Namely, let $M$ be a left
$S$-module.  Then $M^* := \hom_S(M,S)$ carries a canonical right $S$-module structure
given by $(fs)(m) = f(m) \cdot s$ for a homomorphism of left $S$-modules $f\colon M \to S$
and $m \in M$.  The involution allows us to view $M^* = \hom_S(M,S)$ as a left $S$-module,
namely, define $sf$ for $s\in S$ and $f \in M^*$ by $(sf)(m) := f(m)\cdot \overline{s}$
for $m \in M$.

Given a finitely generated projective $S$-module $P$, denote by
$e(P) \colon P \xrightarrow{\cong} (P^*)^*$ the canonical isomorphism of left
$S$-modules that sends $p \in P$ to the element in $(P^*)^*$ given by
$P^* \to S, \;f \mapsto \overline{f(p)}$. We will use it in the sequel to identify
$P = (P^*)^*$.

  \begin{definition}[Non-singular $\epsilon$-symmetric
    form]\label{def:symmetric-non-singular-form}
    Let $\epsilon \in \{\pm 1\}$. An \emph{$\epsilon$-symmetric form} over an associative
    ring $S$ with involution is a finitely generated projective $S$-module $P$
    together with an $S$-map $\phi\colon P \to P^*$ such that the composite
    $P \xrightarrow{e(P)} (P^*)^* \xrightarrow{\phi^*} P^*$ agrees with
    $\epsilon \cdot \phi$.

    We call an $\epsilon$-symmetric form $(P,\phi)$ \emph{non-singular}
    if $\phi$ is an isomorphism.
  \end{definition}

\begin{definition}[The standard hyperbolic $\epsilon$-symmetric form]%
\label{def:standard_symmetric_hyperbolic_form}
    Let $P$ be a finitely generated projective $S$-module. The \emph{standard hyperbolic
      $\epsilon$-symmetric form} $H^{\epsilon}(P)$ is given by the $S$-module $P \oplus P^*$ and the
    $S$-isomorphism
    \[
      \phi\colon (P \oplus P^*) \xrightarrow{\begin{pmatrix} 0& 1\\ \epsilon & 0
        \end{pmatrix}}
       P^* \oplus
      P \xrightarrow{\id \oplus e(P)} P^* \oplus (P^*)^* \xrightarrow{\gamma} (P \oplus
      P^*)^*
    \]
    where $\gamma$ is the obvious $S$-isomorphism.
  \end{definition}

Let $P$ be a finitely generated projective $S$-module. Define an involution of abelian groups
\[
T  = T(P)
\colon  \hom_S(P,P^*) \to \hom_S(P,P^*), \quad u \mapsto u^* \circ e(P).
\]
 Define abelian groups
\begin{eqnarray*}
Q^{\epsilon}(P)
 & := & \ker\left((1- \epsilon \cdot T)\colon  \hom_S(P,P^*) \to \hom_S(P,P^*)\right);
\\
Q_{\epsilon}(P)
 & := & \coker\left((1 - \epsilon \cdot T)\colon  \hom_S(P,P^*) \to \hom_S(P,P^*)\right).
\end{eqnarray*}

\begin{definition}[Non-singular $\epsilon$-quadratic form]%
\label{def:quadratic_non-singular_form}
Let $\epsilon \in \{\pm 1\}$. An \emph{$\epsilon$-quadratic form}
$(P,\psi)$
is a finitely generated projective $S$-module $P$ together with an
element $\psi \in Q_{\epsilon}(P)$. It is called
\emph{non-singular} if the \emph{associated $\epsilon$-symmetric form} $(P,(1 + \epsilon \cdot T)(\psi))$
is non-singular, i.e.
$(1 + \epsilon \cdot T)(\psi)\colon  P \to P^*$ is bijective.
\end{definition}

\begin{definition}[The standard hyperbolic $\epsilon$-quadratic form]%
\label{def:standard_quadratic_hyperbolic_form} 
Let $P$ be a finitely generated projective $S$-module. The 
\emph{standard hyperbolic $\epsilon$-quadratic form}
$H_{\epsilon}(P)$ is given by the $S$-module $P \oplus P^*$ and 
the class in $Q_{\epsilon}(P \oplus P^*)$
of the $S$-homomorphism
\[
\phi\colon  (P \oplus P^*) \xrightarrow{\begin{pmatrix} 0 &1 \\ 0 & 0  \end{pmatrix}}
P^* \oplus P \xrightarrow{\id \oplus e(P)}  P^* \oplus (P^*)^* \xrightarrow{\gamma} (P \oplus P^*)^*
\]
where $\gamma$ is the obvious $S$-isomorphism.
\end{definition}

\begin{definition}[$L$-groups in even dimensions]%
\label{def:quadratic_Lh-group_in_even_dimensions}
For an even integer $n = 2k$ define the abelian group $L^p_n(S)$,
called the \emph{$n$-th projective quadratic $L$-group,}
to be the abelian group of
equivalence classes $[P,\psi]$ of non-singular $(-1)^{k}$-quadratic 
forms $(P,\psi)$, whose underlying $S$-module $P$ is
a finitely generated projective  $S$-module, with 
respect to the following equivalence relation: We call
$(P,\psi)$ and $(P',\psi')$ equivalent or stably isomorphic
if and only  if there exists finitely generated projective $S$-modules $Q$ and $Q'$ and an 
isomorphism of non-singular $(-1)^k$-quadratic forms 
\[
(P,\psi) \oplus H_{(-1)^k}(Q) \cong (P',\psi') \oplus H_{(-1)^k}(Q').
\]
Addition is given by the direct sum of two $(-1)^k$-quadratic forms. 
The zero element is represented
by $[H_{(-1)^k}(Q)]$ for any finitely generated projective $S$-module $Q$.  The inverse of
$[P,\psi]$ is given by $[P,-\psi]$. 
\end{definition}

Note that $L_0(S)$ corresponds to $K_0(S)$ in the sense that it can be viewed as the
Grothendieck construction associated to non-singular $(-1)^{k}$-quadratic forms $(P,\psi)$
modulo hyperbolic forms. In view of the definition of $K_1(S)$ in terms of automorphism,
it is not surprizing that one can define for an odd integer $n=2k+1$ the abelian group
$L^p_n(S)$ in terms of automorphism of $(-1)^{k}$-quadratic forms
$(P,\psi)$. For more detail we refer for instance to~\cite[Section~9.2]{Lueck-Macko(2024)}.

If one replaces in the definition of $L_n^p(S)$ finitely generated projective by finitely
generated free, one obtains the \emph{$n$-th free quadratic $L$-group} $L_n^h(S)$.

If $S$ is $\IZ G$ with the involution sending $\sum_{g \in G}r_g \cdot g$ to
$\sum_{g \in G}r_g \cdot g^{-1}$, there is also the \emph{$n$-th  simple quadratic $L$-group} $L_n^s(\IZ G)$, where
one considers based free finitely generated $\IZ G$-modules and takes Whitehead torsion
into account.

Actually, for every $j \in \IZ^{\le  2}$, one can define  $L$-groups   $L^{\langle j\rangle}_n (\IZ G)$,
where we put
\begin{eqnarray*}
  L_n^s(\IZ G) & = & L^{\langle 2 \rangle}_n(\IZ G);
  \\
  L^h_n(\IZ G) & = &   L^{\langle 1 \rangle}_n(\IZ G);
  \\
  L^p_n(\IZ G)
  & = & L^{\langle 0 \rangle}_n(\IZ G).
\end{eqnarray*}
For $j \in \IZ^{\le 1}$ there are canonical maps
$L_n^{\langle j+1 \rangle}(\IZ G) \to L_n^{\langle j\rangle}(\IZ G)$ which fit into
the so called \emph{Rothenberg sequences}
\begin{multline}
  \cdots \to L_n^{\langle j+1 \rangle}(\IZ G) \to L_n^{\langle j\rangle}(\IZ G) \to
  \widehat{H}^n(\IZ/ 2; \widetilde{K}_j(\IZ G))  \\ \to L_{n-1}^{\langle j+1 \rangle}(\IZ G)
    \to L_{n-1}^{\langle j\rangle}(\IZ G) \to \cdots
 \label{Rothenberg_sequence_for_Zpi_j_plus_1_to_j}
\end{multline}
involving the Tate cohomology of the $K$-groups of $\IZ G$.

One defines
\begin{eqnarray}
 L^{\langle - \infty \rangle}(\IZ G)  & = & \colim_{j \to - \infty} L_n^{\langle j\rangle}(\IZ G).
\label{L-groups_with_decorations_-infty}
\end{eqnarray}

For a group homomorphism $w \colon G \to \{\pm 1\}$, there are also versions
$L_n^{\langle -\infty \rangle}(\IZ G,w)$ which are just obtained by the constructions above
applied to $\IZ G$ equipped with the $w$-twisted involution.  In applications to  closed
manifolds, the group $G$ will be given by the fundamental group and $w$ by the first
Stiefel-Whitney class.

Note that by definition we have
$L_n^{\langle j \rangle} (\IZ G) = L_{n+4}^{\langle j \rangle} (\IZ G)$ for every $n \in \IZ$ and
$j \in \{2,1,0,-1, \ldots\} \amalg \{-\infty\}$.

These decorations $s,p,h$, or $\langle j \rangle$ do not appear in $K$-theory and lead to
some complications for $L$-theory. Namely, for geometric applications the decoration $s$
is the right one in view of the $s$-Cobordism Theorem~\ref{the:s-Cobordism_Theorem},
whereas for the Farrell-Jones Conjecture one needs to consider the decoration
$\langle - \infty \rangle$ because of the Shaneson splitting
\begin{eqnarray*}
  L_n^{\langle j \rangle}(\IZ[G \times \IZ])
  & \cong &
L_n^{\langle j \rangle}(\IZ G ) \oplus L_{n-1}^{\langle j-1 \rangle}(\IZ G );
\\
  L_n^{\langle -\infty \rangle}(\IZ[G \times \IZ])
  & \cong &
 L_n^{\langle -\infty  \rangle }(\IZ G) \oplus L_{n-1}^{\langle -\infty \rangle}(\IZ G).
\end{eqnarray*}

However, there is a favourite situation, where the decoration do not matter because of the
Rothenberg sequences.

\begin{proposition}\label{pro:no_decorations_in_the_torsionfree_case}
  Suppose  that  $G$ satisfies
  Conjecture~\ref{con:The_Farrell-Jones_Conjecture_for_middle_and_negative_K-theory}
  Then the canonical map
  \[
    L_n^s(\IZ G) \to L_n^{\langle -\infty  \rangle }(\IZ G)
  \]
  is bijective.
\end{proposition}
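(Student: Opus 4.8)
The plan is to run the Rothenberg sequences~\eqref{Rothenberg_sequence_for_Zpi_j_plus_1_to_j} and show that all the Tate cohomology terms that obstruct the comparison of decorations vanish under the hypothesis. By assumption $G$ satisfies Conjecture~\ref{con:The_Farrell-Jones_Conjecture_for_middle_and_negative_K-theory}, so $\Wh(G) = 0$, $\widetilde{K}_0(\IZ G) = 0$, and $K_n(\IZ G) = 0$ for $n \le -1$. I first observe that the relevant groups entering the Rothenberg sequences for the steps $L_n^{\langle j+1\rangle} \to L_n^{\langle j\rangle}$ are $\widehat{H}^n(\IZ/2; \widetilde{K}_j(\IZ G))$ for $j = 1, 0, -1, -2, \ldots$, where $\widetilde{K}_1(\IZ G)$ is to be interpreted as $\Wh(G)$ (the reduced $K_1$ in the appropriate sense for the simple-versus-free comparison) and $\widetilde{K}_0(\IZ G)$ is the reduced projective class group. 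All of these coefficient groups vanish by hypothesis, hence so do their Tate cohomology groups.

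Next I would argue stepwise. From the Rothenberg sequence with $j = 1$ and the vanishing of $\widehat{H}^\ast(\IZ/2;\Wh(G)) = \widehat{H}^\ast(\IZ/2;\widetilde{K}_1(\IZ G))$, exactness forces $L_n^s(\IZ G) = L_n^{\langle 2\rangle}(\IZ G) \xrightarrow{\cong} L_n^{\langle 1\rangle}(\IZ G) = L_n^h(\IZ G)$ to be an isomorphism for all $n$. Similarly, with $j = 0$ and $\widehat{H}^\ast(\IZ/2;\widetilde{K}_0(\IZ G)) = 0$, the map $L_n^h(\IZ G) = L_n^{\langle 1\rangle}(\IZ G) \xrightarrow{\cong} L_n^{\langle 0\rangle}(\IZ G) = L_n^p(\IZ G)$ is an isomorphism. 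For $j \le -1$ the coefficient $\widetilde{K}_j(\IZ G) = K_j(\IZ G) = 0$ by the vanishing of negative $K$-theory, so each map $L_n^{\langle j+1\rangle}(\IZ G) \to L_n^{\langle j\rangle}(\IZ G)$ is an isomorphism for all $n$ as well. Composing these isomorphisms and passing to the colimit that defines $L_n^{\langle -\infty\rangle}(\IZ G)$ (see~\eqref{L-groups_with_decorations_-infty}), which is a colimit along isomorphisms, gives that $L_n^s(\IZ G) \to L_n^{\langle -\infty\rangle}(\IZ G)$ is bijective. Finally I would check that under these hypotheses this composite agrees with the canonical map named in the statement; this is a formal compatibility of the Rothenberg maps with the structure maps of the colimit.

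The main obstacle is bookkeeping rather than mathematics: one has to be careful about the precise meaning of the decoration $\langle 1 \rangle$ versus $\langle 2 \rangle$ and the identification of the coefficient group in the first Rothenberg sequence with $\Wh(G)$ rather than the full $K_1(\IZ G)$, and similarly that the colimit in~\eqref{L-groups_with_decorations_-infty} is indeed a colimit over a sequence of isomorphisms once all the Tate terms vanish, so that no $\lim^1$ or stabilization subtlety arises. Given that the paper's Conjecture~\ref{con:The_Farrell-Jones_Conjecture_for_middle_and_negative_K-theory} packages exactly the vanishing statements needed, the argument is a direct diagram chase through finitely many Rothenberg sequences followed by a trivial colimit.
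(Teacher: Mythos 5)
Your argument is correct and is clearly the intended one: the paper states the proposition immediately after recording the Rothenberg sequences~\eqref{Rothenberg_sequence_for_Zpi_j_plus_1_to_j} and the colimit definition~\eqref{L-groups_with_decorations_-infty} precisely so that the reader can run exactly this diagram chase, and no proof is supplied in the text. The key observations are all present in your write-up: the coefficient group for the step $\langle 2\rangle \to \langle 1\rangle$ is $\Wh(G)$ rather than $K_1(\IZ G)$, the one for $\langle 1\rangle \to \langle 0\rangle$ is $\widetilde{K}_0(\IZ G)$, and for $j \le -1$ it is $K_j(\IZ G)$; all of these vanish under Conjecture~\ref{con:The_Farrell-Jones_Conjecture_for_middle_and_negative_K-theory}, so all Tate terms vanish, each map $L_n^{\langle j+1\rangle}(\IZ G) \to L_n^{\langle j\rangle}(\IZ G)$ is an isomorphism, and the colimit is along isomorphisms (no $\lim^1$ concern, as you note, since this is a filtered colimit rather than an inverse limit). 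One small remark: there are infinitely many Rothenberg sequences to invoke (one for each $j \le 1$), not finitely many as your closing sentence says, but this does not affect the argument since each step is handled identically and the colimit takes care of the rest.
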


%-----------------------------------------------------------------------------

\subsection{Conjecture about $L$-groups for  torsionfree groups}%
\label{subsec:Conjecture_about_K-groups_for_torsionfree_groups}

Given a ring with involution $S$, there
exists an \emph{$L$-theory spectrum associated to $S$ with decoration $\langle -\infty \rangle$}
\begin{eqnarray}
& \bfL^{\langle -\infty \rangle}(S)%
\label{spectrum_Llangle-inftyrangle(S)}
\end{eqnarray}
with the property that $\pi_n(\bfL^{\langle -\infty \rangle}(S)) =
L_n^{\langle -\infty \rangle}(S)$ holds for $n \in \IZ$.

\begin{conjecture}[Farrell-Jones Conjecture for $L$-theory for torsionfree groups]%
\label{con:FJC_torsionfree_L-theory}%
Let $G$ be a torsionfree group. Let $R$ be any ring with involution.  

Then the assembly map
\[
  H_n(BG;\bfL^{\langle -\infty \rangle} (R)) \to L_n^{\langle - \infty
    \rangle}(RG)
\]
is an isomorphism for all $n \in \IZ$.
\end{conjecture}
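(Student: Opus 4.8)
The plan is to deduce Conjecture~\ref{con:FJC_torsionfree_L-theory} from the unrestricted form of the Farrell--Jones Conjecture in $L$-theory, which asserts that for \emph{every} group $G$ and \emph{every} ring with involution $R$ the assembly map induced by the projection $\EGF{G}{\calvcyc}\to\pt$,
\[
H_n^G(\EGF{G}{\calvcyc};\bfL^{\langle -\infty\rangle}_R)\;\longrightarrow\;H_n^G(\pt;\bfL^{\langle -\infty\rangle}_R)=L_n^{\langle -\infty\rangle}(RG),
\]
is an isomorphism; here $\bfL^{\langle -\infty\rangle}_R$ is the $\Or(G)$-spectrum sending $G/H$ to $\bfL^{\langle -\infty\rangle}(RH)$ and $H_*^G$ is the associated $G$-equivariant homology theory. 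This $\calvcyc$-version is the genuine mathematical content: it is what has been proved for the class $\calfj$ of Farrell--Jones groups and what is open in general. Everything specific to torsionfree $G$ is formal, so below I explain the passage from the $\calvcyc$-assembly map to the map in Conjecture~\ref{con:FJC_torsionfree_L-theory}, flagging the $\calvcyc$-isomorphism itself as the one hard input.

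First I would replace $\calvcyc$ by the family $\calfin$ of finite subgroups. The up-to-$G$-homotopy unique map $\EGF{G}{\calfin}\to\EGF{G}{\calvcyc}$ factors the assembly map as
\[
H_n^G(\EGF{G}{\calfin};\bfL^{\langle -\infty\rangle}_R)\;\xrightarrow{\;A_{\calfin\to\calvcyc}\;}\;H_n^G(\EGF{G}{\calvcyc};\bfL^{\langle -\infty\rangle}_R)\;\longrightarrow\;L_n^{\langle -\infty\rangle}(RG),
\]
so it suffices to show $A_{\calfin\to\calvcyc}$ is an isomorphism. By the Transitivity Principle this reduces to checking, for every virtually cyclic $V\le G$, that the $\calfin$-assembly map $H_n^V(\EGF{V}{\calfin};\bfL^{\langle -\infty\rangle}_R)\to L_n^{\langle -\infty\rangle}(RV)$ is an isomorphism. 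Since $G$ is torsionfree, such a $V$ is either trivial --- where the map is the identity --- or infinite cyclic, $V\cong\IZ$; in the latter case $\EGF{\IZ}{\calfin}\simeq\IR$ with orbit space $S^1$, so the claim becomes that $H_n(S^1;\bfL^{\langle -\infty\rangle}(R))\to L_n^{\langle -\infty\rangle}(R[\IZ])$ is an isomorphism. The cofibre sequence $S^0\to S^1_+\to S^1$ identifies the source with $L_n^{\langle -\infty\rangle}(R)\oplus L_{n-1}^{\langle -\infty\rangle}(R)$, and I would check that under this identification the assembly map is exactly the Shaneson splitting isomorphism $L_n^{\langle -\infty\rangle}(R[\IZ])\cong L_n^{\langle -\infty\rangle}(R)\oplus L_{n-1}^{\langle -\infty\rangle}(R)$ recalled in Subsection~\ref{subsec:Algebraic_L-groups}, matching the summand $H_n(\pt)$ with the image of the ring inclusion $R\to R[\IZ]$ and $H_{n-1}(\pt)$ with the Bass--Heller--Swan type map. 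The decoration $\langle -\infty\rangle$ is forced precisely here: for $s$, $h$ or $p$ the Shaneson splitting carries a shift of decoration and a Tate-cohomology defect, and only in the colimit $\langle -\infty\rangle$ does the splitting become decoration-preserving and $4$-periodic.

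Second, because $G$ is torsionfree a model for $\EGF{G}{\calfin}$ is a free contractible $G$-CW complex, i.e.\ $EG$ with orbit space $BG$; hence $H_n^G(\EGF{G}{\calfin};\bfL^{\langle -\infty\rangle}_R)\cong H_n(BG;\bfL^{\langle -\infty\rangle}(R))$, the non-equivariant homology of the spectrum $\bfL^{\langle -\infty\rangle}(R)$. Chaining the three identifications shows the map $H_n(BG;\bfL^{\langle -\infty\rangle}(R))\to L_n^{\langle -\infty\rangle}(RG)$ of Conjecture~\ref{con:FJC_torsionfree_L-theory} agrees with the $\calvcyc$-assembly map, hence is an isomorphism whenever the latter is, in particular whenever $G\in\calfj$. (By Proposition~\ref{pro:no_decorations_in_the_torsionfree_case} the $\langle -\infty\rangle$-statement then upgrades to the $L^s$-statement once the $K$-theoretic Conjecture~\ref{con:The_Farrell-Jones_Conjecture_for_middle_and_negative_K-theory} is known for $G$, which again follows from the unrestricted conjecture.)

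The main obstacle is the $\calvcyc$-assembly isomorphism itself, where all the geometry enters: flow spaces associated to a boundary or a $\CAT(0)$-model, controlled algebra over metric spaces, transfer maps into such controlled categories, and the Farrell--Hsiang covering method. This is exactly the statement unknown for general $G$. Inside the torsionfree reduction the only non-formal point is the compatibility claim for $V\cong\IZ$ --- that the abstract homological splitting of $H_*(S^1;\bfL^{\langle -\infty\rangle}(R))$ is realized by the Bass--Heller--Swan/Shaneson maps. I would settle this by unwinding the assembly map as the map on homotopy colimits induced by $\EGF{\IZ}{\calfin}\to\pt$ and comparing with Ranicki's algebraic description of the $L$-theoretic Bass--Heller--Swan decomposition; this is bookkeeping at the spectrum level rather than a new idea.
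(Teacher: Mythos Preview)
The statement is a \emph{conjecture}, not a theorem; the paper does not prove it and does not claim to. What you have written is not a proof of Conjecture~\ref{con:FJC_torsionfree_L-theory} but a reduction showing that it follows from the general $L$-theoretic Farrell--Jones Conjecture with coefficients (Conjecture~\ref{con:L-theoretic_Farrell-Jones_Conjecture_with_coefficients_in_additive_G-categories_with_involution}), which is itself open. You are entirely transparent about this --- you flag the $\calvcyc$-assembly isomorphism as ``the one hard input'' that is ``unknown for general $G$'' --- so there is no logical gap, only a mismatch between the task (prove the statement) and what is actually possible (reduce it to the general conjecture).

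Your reduction is correct and is the standard argument. The paper itself records the implication only in passing, in Subsection~\ref{subsec:The_Full_Farrell-Jones_Conjecture_implies_all_other_variants_of_the_Farrell-Jones_Conjecture}, with no details beyond a reference to~\cite[Section~13.11]{Lueck(2022book)}. Your outline --- the Transitivity Principle to pass from $\calvcyc$ to $\calfin$, the observation that for torsionfree $G$ every virtually cyclic subgroup is trivial or infinite cyclic, the Shaneson splitting for $V\cong\IZ$ with the decoration $\langle -\infty\rangle$, and the identification $\EGF{G}{\calfin}\simeq EG$ so that the source becomes $H_n(BG;\bfL^{\langle -\infty\rangle}(R))$ --- is exactly the route one finds there. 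The remark that $\langle -\infty\rangle$ is forced by the decoration shift in Shaneson splitting is also the reason the paper singles out this decoration.
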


Note that $L_n^{\langle j \rangle}(\IZ)$ is independent of the
decoration and hence we simple write $\bfL$ or $\bfL(\IZ)$ for
$\bfL^{\langle j \rangle} (\IZ)$.

\begin{proposition}\label{FJC_for_torsionfree_G_and_decoration_s}
  Let $G$ be a torsionfree group coming with a group homomorphism
  $w \colon G \to \{\pm 1\}$.  Suppose that
  Conjectures~\ref{con:The_Farrell-Jones_Conjecture_for_middle_and_negative_K-theory}
  and~\ref{con:FJC_torsionfree_L-theory} hold for $R = \IZ$.

  Then the assembly map
  \[
    H_n(BG;\bfL(\IZ)) \to L_n^s(\IZ G)
\]
is an isomorphism for all $n \in \IZ$.
\end{proposition}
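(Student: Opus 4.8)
The plan is to reduce the statement to Conjecture~\ref{con:FJC_torsionfree_L-theory}, which is part of the hypotheses, by comparing the decorations $s = \langle 2 \rangle$ and $\langle -\infty \rangle$ on both the source and the target of the assembly map. The source is essentially unproblematic: since $L_n^{\langle j \rangle}(\IZ)$ is independent of the decoration $j$, the spectra $\bfL^{\langle j \rangle}(\IZ)$ all coincide with $\bfL = \bfL(\IZ)$, hence the associated (possibly $w$-twisted) homology theory is the same for every decoration, and under these identifications the change-of-decoration maps induce the identity on $H_n(BG; \bfL(\IZ))$.

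Next I would analyse the target. By hypothesis $G$ satisfies Conjecture~\ref{con:The_Farrell-Jones_Conjecture_for_middle_and_negative_K-theory}, so $\Wh(G)$, $\widetilde{K}_0(\IZ G)$, and $K_j(\IZ G)$ for $j \le -1$ all vanish. Feeding this vanishing into the Rothenberg sequences~\eqref{Rothenberg_sequence_for_Zpi_j_plus_1_to_j}, which hold verbatim for $\IZ G$ equipped with the $w$-twisted involution because only the $K$-theoretic input enters, shows that every map $L_n^{\langle j+1 \rangle}(\IZ G) \to L_n^{\langle j \rangle}(\IZ G)$ with $j \le 1$ is an isomorphism; hence the canonical map $L_n^s(\IZ G) \to L_n^{\langle -\infty \rangle}(\IZ G)$ is an isomorphism. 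This is exactly Proposition~\ref{pro:no_decorations_in_the_torsionfree_case}, which applies equally with the twisted involution.

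I would then assemble these observations into the commutative square
\[
\xymatrix{
H_n(BG;\bfL(\IZ)) \ar[r]^-{A^s} \ar@{=}[d] & L_n^s(\IZ G) \ar[d]^-{\cong}
\\
H_n(BG;\bfL^{\langle -\infty \rangle}(\IZ)) \ar[r]^-{A^{\langle -\infty \rangle}} & L_n^{\langle -\infty \rangle}(\IZ G)
}
\]
in which the top row is the assembly map in question, the bottom row is the $\langle -\infty \rangle$-assembly map, an isomorphism by Conjecture~\ref{con:FJC_torsionfree_L-theory} for $R = \IZ$ with the $w$-twisted involution, the left vertical map is an identity by the first paragraph, and the right vertical map is the isomorphism of the second paragraph. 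An elementary diagram chase then forces $A^s$ to be an isomorphism for all $n \in \IZ$.

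The step that requires genuine care, and the real content of the argument, is the commutativity of that square, that is, the naturality of the assembly map with respect to change of decoration. This has to be established on the level of the construction of the assembly map: the decorated $L$-theory spectra organise into an $\Or(G)$-spectrum $\bfL^{\langle j \rangle}_{\IZ}$ together with natural transformations $\bfL^{\langle j+1 \rangle}_{\IZ} \to \bfL^{\langle j \rangle}_{\IZ}$ inducing the Rothenberg maps on the homotopy groups of each value, and applying the equivariant homology $H^G_n(-;-)$ to the projection $EG \to \pt$ and to such a transformation produces precisely the two rows and two columns of the square; commutativity is then the naturality of $H^G_n$ in the spectrum variable. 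The identification of the left column with the identity uses in addition that the transformation $\bfL^{\langle j+1 \rangle}_{\IZ} \to \bfL^{\langle j \rangle}_{\IZ}$ is a weak equivalence on the orbit $G/\trivial$, which is once again the decoration-independence of $L_*(\IZ)$.
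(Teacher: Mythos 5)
The paper states Proposition~\ref{FJC_for_torsionfree_G_and_decoration_s} (and the underlying Proposition~\ref{pro:no_decorations_in_the_torsionfree_case}) without proof, but your argument is exactly the one the reader is expected to supply: use the vanishing of $\Wh(G)$, $\widetilde{K}_0(\IZ G)$, and $K_j(\IZ G)$ for $j \le -1$ to kill the Tate terms in the Rothenberg sequences~\eqref{Rothenberg_sequence_for_Zpi_j_plus_1_to_j}, so that $L_n^s(\IZ G) \to L_n^{\langle -\infty\rangle}(\IZ G)$ is an isomorphism, use the decoration-independence of $L_*(\IZ)$ on the source, and conclude by naturality of assembly against the $\langle -\infty\rangle$-version supplied by Conjecture~\ref{con:FJC_torsionfree_L-theory}. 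Your flag on where the real work hides — the commutativity of the change-of-decoration square, i.e.\ that the decorated $L$-spectra form a diagram of $\Or(G)$-spectra so that assembly is natural in the decoration — is well placed, and your remark that the $w$-twist is harmless because the Rothenberg terms depend only on the underlying $K$-groups is also correct.
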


%%%%%%%%%%%%%%%%%%%%%%%%%%%%%%%%%%%%%%%%%%%%%%%%%%%%%%%%%%%%%%%%%%%%%
%%%%%%% Section 7: Applications of the farrell-Jones Conjecture  for torsionfree groups  %%%%%%
%%%%%%%%%%%%%%%%%%%%%%%%%%%%%%%%%%%%%%%%%%%%%%%%%%%%%%%%%%%%%%%%%%%%%

\typeout{---------- Section 6:  Further applications of the farrell-Jones Conjecture  for torsionfree groups    ---------------}

\section{Further applications of the Farrell-Jones Conjecture  for torsionfree groups}%
\label{sec:Further_applications_of_the_farrell-Jones_Conjecture__for_torsionfree_groups}

We have already discussed the finiteness obstruction in
Subsection~\ref{subsec:Wall's_finiteness_obstruction} and the $s$-Cobordism Theorem in
Subsection~\ref{subsec:The_s-Cobordism_Theorem} as interesting applications of
Conjectures~\ref{con:K_0(ZG)_vanishes_for_torsionfree_G}
and~\ref{con:FHC_Farrell-Jones_Conjecture_for_Wh(G)_for_torsionfree_Groups} for
torsionfree fundamental groups.  Next we discuss prominent applications of
Conjectures~\ref{con:The_Farrell-Jones_Conjecture_for_middle_and_negative_K-theory}
and~\ref{con:FJC_torsionfree_L-theory} for $R = \IZ$ which illustrates their importance.

%-----------------------------------------------------------------------------

\subsection{The Borel Conjecture}%
\label{subsec:The_Borel_Conjecture}

\begin{definition}[Topologically rigid]\label{def:topological_rigid}
   We call a closed topological manifold $N$
  \emph{topologically rigid} if any homotopy equivalence $M \to N$
   with a closed topological manifold $M$ as source is homotopic to a homeomorphism.
 \end{definition}

 The Poincar\'e Conjecture says that $S^n$ is topologically rigid and is known to be true
 for all $n \ge 1$.

 A space $X$ is called \emph{aspherical}, if it is path connected and all its higher
 homotopy groups vanish, i.e., $\pi_n(X)$ is trivial for $n \ge 2$.  A $CW$-complex is
 aspherical if and only if it is connected and its universal covering is contractible.
 Given two aspherical $CW$-complexes $X$ and $Y$, the map from the set of homotopy classes
 of maps $X \to Y$ to the set of group homomorphisms $\pi_1(X) \to \pi_1(Y)$ modulo inner
 automorphisms of $\pi_1(Y)$ given by the map induced on the fundamental groups is a
 bijection.  In particular, two aspherical $CW$-complexes are homotopy equivalent if and
 only if they have isomorphic fundamental groups and every isomorphism between their
 fundamental groups comes from a homotopy equivalence.

 \begin{example}[Examples of aspherical
  manifolds]\label{exa:Examples_of_aspherical_manifolds}\
  \begin{itemize}
  \item A connected closed $1$-dimensional manifold is homeomorphic to $S^1$ and
    hence aspherical;

  \item Let $M$ be a connected closed $2$-dimensional manifold.  Then $M$ is
    either aspherical or homeomorphic to $S^2$ or $\IR\IP^2$;

  \item A connected closed $3$-manifold $M$ is called \emph{prime} if for any
    decomposition as a connected sum $M \cong M_0 \sharp M_1$ one of the
    summands $M_0$ or $M_1$ is homeomorphic to~$S^3$. It is called \emph{irreducible}, if
    any embedded    sphere $S^2$ bounds a disk $D^3$. Every irreducible closed $3$-manifold is
    prime. A prime closed $3$-manifold is either irreducible or an $S^2$-bundle
    over $S^1$.  The following statements are equivalent for a closed
    $3$-manifold $M$:

    \begin{itemize}

    \item $M$ is aspherical;

    \item $M$ is irreducible and its fundamental group is infinite and contains
      no element of order $2$;

    \item The fundamental group $\pi_1(M)$ cannot be written in a non-trivial way as a
      free product of two groups, is infinite, different from $\IZ$, and contains no element of order $2$;

    \item The universal covering of $M$ is homeomorphic to $\IR^3$.

     \end{itemize}

    \item Let $L$ be a Lie group with finitely many path components. Let $K
      \subseteq L$ be a maximal compact subgroup. Let $G \subseteq L$ be a
      discrete torsionfree subgroup.  Then $M = G\backslash L/K$ is an
      aspherical closed smooth manifold with fundamental group $G$, since its universal
      covering $L/K$ is diffeomorphic to $\IR^n$ for appropriate $n$;

    \item Every closed Riemannian smooth manifold with non-positive sectional curvature
      has a universal covering which is diffeomorphic to $\IR^n$ and is in
      particular aspherical.

    \end{itemize}
  \end{example}

\begin{example}[Examples of exotic aspherical closed manifolds]%
\label{exa:Examples_of_exotic_aspherical_closed_manifolds}
  There are rather exotic aspherical closed topological manifolds.
  The following examples are due to Belegradek~\cite[Corollary~5.1]{Belegradek(2006)},
  Davis~\cite{Davis(1983)},  Davis-Fowler-Lafont~\cite{Davis-Fowler-Lafont(2014)},
  Davis-Januszkie\-wicz~\cite[Theorem~5b.1]{Davis-Januszkiewicz(1991)},
  Mess~\cite{Mess(1991)}, Osajda~\cite[Corollary~3.5]{Osajda(2020)}, and
  Weinberger, see~\cite[Section~13]{Davis(2002exotic)}.

  \begin{itemize}

  \item There exists for each $n \ge 6$ an $n$-dimensional aspherical closed topological
    manifold that  cannot be triangulated. One can arrange  that the fundamental group is hyperbolic;

\item For each $n \ge 4$   there exists an aspherical closed  $n$-dimensional
  manifold such that its universal covering is not homeomorphic to $\IR^n$;

\item For every $n \ge 4$, there is an aspherical closed  topological manifold of
    dimension $n$ whose fundamental group contains an infinite
    divisible abelian group;

  \item For every $n \ge 4$, there is an  aspherical closed  $\PL$ manifold of
    dimension $n$ whose fundamental group has an unsolvable word
    problem and whose simplicial volume in the sense of Gromov~\cite{Gromov(1982)} is non-zero;

    \item For every $n \ge 4$, there is an  aspherical closed   manifold of
      dimension $n$  whose fundamental group contains coarsely embedded expanders;

  \end{itemize}
\end{example}

 \begin{conjecture}[Borel Conjecture (for a group $G$ in  dimension $n$)]%
\label{con:Borel_Conjecture}
  The \emph{Borel Conjecture for a group $G$ in  dimension $n$}
   predicts for two aspherical closed topological manifolds $M$ and $N$ of dimensions $n$
  with $\pi_1(M) \cong \pi_1(N) \cong G$, that $M$ and $N$ are
  homeomorphic and any homotopy equivalence $M \to N$ is homotopic to
  a homeomorphism.

  The \emph{Borel Conjecture}
  says that every aspherical closed topological manifold is topologically rigid.  
\end{conjecture}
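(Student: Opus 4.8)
The Borel Conjecture is an open problem, so what follows is the strategy by which it is established in every dimension $\ge 5$ for all torsionfree groups lying in $\calfj$, together with the point at which this strategy stalls in general.

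First I would reduce the statement to the triviality of a topological structure set. If $f\colon M \to N$ is a homotopy equivalence and $N$ is aspherical, then $M$ is aspherical too, since $\pi_k(M) \cong \pi_k(N) = 0$ for $k \ge 2$; moreover two aspherical closed manifolds with isomorphic fundamental groups are automatically homotopy equivalent. Hence both assertions of the conjecture follow once one knows that $f$ is homotopic to a homeomorphism, and in dimensions $n \ge 5$ this is exactly the statement that the topological structure set $\mathcal{S}^{\topo}(N)$ consists of a single element. So the goal becomes: $\mathcal{S}^{\topo}(N)$ is a point whenever $N = BG$ is an aspherical closed manifold of dimension $n \ge 5$.

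Second, I would insert $N = BG$ into the Sullivan--Wall surgery exact sequence in the topological category, in the form given by Ranicki's identification of the surgery obstruction map with an assembly map:
\begin{multline*}
  \cdots \to H_{n+1}(BG; \bfL(\IZ)) \xrightarrow{A} L_{n+1}^s(\IZ G) \to \mathcal{S}^{\topo}(N)
  \\
  \to H_n(BG; \bfL(\IZ)) \xrightarrow{A} L_n^s(\IZ G),
\end{multline*}
in which the normal-invariant term is identified with the $\bfL(\IZ)$-homology of $N = BG$ (using that the homotopy groups $L_n(\IZ)$ are decoration-independent) and $A$ is the $L$-theory assembly map with $s$-decoration. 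By exactness, $\mathcal{S}^{\topo}(N)$ is trivial as soon as $A$ is surjective in degree $n+1$ and injective in degree $n$; it is enough that $A\colon H_\ast(BG;\bfL(\IZ)) \to L_\ast^s(\IZ G)$ be an isomorphism.

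Third comes the Farrell--Jones input. By Proposition~\ref{FJC_for_torsionfree_G_and_decoration_s}, the assembly map $A\colon H_n(BG;\bfL(\IZ)) \to L_n^s(\IZ G)$ is an isomorphism for all $n$ provided $G$ is torsionfree and Conjectures~\ref{con:The_Farrell-Jones_Conjecture_for_middle_and_negative_K-theory} and~\ref{con:FJC_torsionfree_L-theory} hold for $R = \IZ$; the $K$-theoretic half is needed both to pass, via the Rothenberg sequences, from the $\langle -\infty\rangle$-decoration to the $s$-decoration (vanishing of $\Wh(G)$, of $\widetilde{K}_0(\IZ G)$, and of the negative $K$-groups of $\IZ G$) and to ensure, by the $s$-Cobordism Theorem~\ref{the:s-Cobordism_Theorem}, that $h$-cobordant closed manifolds over $N$ are homeomorphic. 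Thus the Borel Conjecture holds in dimensions $\ge 5$ for every group in $\calfj$. For $n \le 2$ one appeals to the classification of low-dimensional manifolds, and for $n = 3$ to Perelman's proof of Thurston's Geometrization Conjecture, under which aspherical closed $3$-manifolds are classified up to homeomorphism by their fundamental groups.

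The hard part --- and the reason the conjecture is still open --- is twofold. The assembly-map step needs the Farrell--Jones Conjecture, which, while known for the very large class $\calfj$, is not known for all groups; without it the exact sequence yields nothing. And dimension $4$ is genuinely obstructed: topological surgery there is available only for ``good'' fundamental groups in the sense of Freedman--Quinn, so even granting Farrell--Jones one can currently conclude only for a restricted class (for instance virtually solvable $G$), and there is no reduction of the $4$-dimensional case to higher dimensions. Settling the conjecture in full would require either proving Farrell--Jones for all groups or resolving the disc-embedding problem for arbitrary $\pi_1$ in dimension $4$.
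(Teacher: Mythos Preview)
Your overall strategy matches the paper's: reduce to the structure set, feed the aspherical manifold into the surgery exact sequence, and use the Farrell--Jones input (via Proposition~\ref{FJC_for_torsionfree_G_and_decoration_s}) to kill the relevant terms. You also correctly frame the statement as a conjecture whose known cases come through Theorem~\ref{the:FJC_implies_Borel}.

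There is, however, a genuine technical gap in your second step. The geometric Sullivan--Wall surgery exact sequence for a closed topological manifold does \emph{not} have $H_*(BG;\bfL(\IZ))$ as its normal-invariant term; it has $H_*(BG;\bfL(\IZ)\langle 1\rangle)$, the homology with coefficients in the $1$-connected cover, since $G/\TOP \simeq \bfL(\IZ)\langle 1\rangle$. The exact sequence you wrote, with the full periodic spectrum, is Ranicki's \emph{algebraic} surgery exact sequence, and the ``structure'' term there classifies homology $\ANR$-manifolds rather than genuine topological manifolds (this is exactly the issue discussed in Remark~\ref{rem:homology_ANR-manifolds_versus_topological_manifolds}). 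So from bijectivity of $A\colon H_*(BG;\bfL(\IZ))\to L_*^s(\IZ G)$ alone you cannot yet conclude that $\cals_n^s(N)$ is trivial.

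The paper closes this gap explicitly in the proof of Theorem~\ref{the:FJC_implies_Borel}: it writes the surgery exact sequence~\eqref{surgery_exact_sequence} with the connective assembly maps $\asmb_k^s\langle 1\rangle$, then uses an Atiyah--Hirzebruch spectral sequence comparison (exploiting that $BG\simeq N$ is $n$-dimensional) to deduce from the bijectivity of the periodic assembly $\asmb_k^s$ that $\asmb_{n+1}^s\langle 1\rangle$ is surjective and $\asmb_n^s\langle 1\rangle$ is injective. That extra step is not optional; without it your argument proves only the homology-manifold version of rigidity. Everything else in your outline --- the reduction to the structure set, the role of the $K$-theoretic vanishing via Rothenberg, the low-dimensional remarks, and the discussion of dimension~$4$ --- is in line with the paper.
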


One of the main applications of the Farrell-Jones Conjecture is the next result.

\begin{theorem}\label{the:FJC_implies_Borel} Suppose that the torsionfree group $G$
  satisfies Conjectures~\ref{con:The_Farrell-Jones_Conjecture_for_middle_and_negative_K-theory}
  and~\ref{con:FJC_torsionfree_L-theory} for $R = \IZ$. Consider $n \in \IZ$ with
  $n \ge 5$.

  Then the Borel Conjecture holds for $G$ in dimension $n$.
\end{theorem}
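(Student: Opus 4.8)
The plan is to reduce the Borel Conjecture for $G$ in dimension $n\ge 5$ to the computation of the topological structure set $\mathcal{S}^{\TOP}(M)$ of an aspherical closed manifold $M$ with $\pi_1(M)\cong G$ via the algebraic surgery exact sequence of Ranicki. First I would recall that, for a closed topological manifold $N$ of dimension $n \ge 5$, the surgery exact sequence
\[
\cdots \to L_{n+1}^s(\IZ\pi_1(N)) \to \mathcal{S}^{\TOP}(N) \to \mathcal{N}^{\TOP}(N) \xrightarrow{\theta} L_n^s(\IZ\pi_1(N))
\]
is isomorphic, in Ranicki's formulation, to a portion of the long exact sequence relating the generalized homology of $N$ with the (connective, then via the decoration comparison non-connective) quadratic $L$-theory spectrum of $\IZ$ to $L_*^s(\IZ\pi_1(N))$, with the map $\mathcal{N}^{\TOP}(N)\to L_n^s$ identified with the $L$-theory assembly map $H_n(N;\bfL(\IZ)\langle 1\rangle) \to L_n^s(\IZ\pi_1(N))$ (up to the usual low-degree/decoration bookkeeping). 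Thus $\mathcal{S}^{\TOP}(N)$ consists of a single element — equivalently, every homotopy equivalence $M\to N$ from a closed manifold is homotopic to a homeomorphism — precisely when the assembly map $H_k(N;\bfL(\IZ))\to L_k^s(\IZ\pi_1(N))$ is an isomorphism for $k=n$ and $k=n+1$ (using $4$-periodicity to phrase this with $\bfL(\IZ)$ rather than its connective cover after accounting for the contribution of $L_*(\IZ)$ in low degrees, which is the role of the surgery-theoretic offset).

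Next I would feed in the hypotheses. Since $G$ satisfies Conjecture~\ref{con:The_Farrell-Jones_Conjecture_for_middle_and_negative_K-theory}, Proposition~\ref{FJC_for_torsionfree_G_and_decoration_s} (applied with the orientation homomorphism $w$ of the manifolds in question) tells us that the $\langle-\infty\rangle$-assembly map of Conjecture~\ref{con:FJC_torsionfree_L-theory} can be replaced by the $s$-decorated assembly map
\[
H_k(BG;\bfL(\IZ)) \xrightarrow{\ \cong\ } L_k^s(\IZ G),
\]
which is therefore an isomorphism for all $k\in\IZ$. Because $M$ and $N$ are aspherical with $\pi_1\cong G$, they are both models for $BG$ up to homotopy equivalence, so $H_k(N;\bfL(\IZ)) \cong H_k(BG;\bfL(\IZ))$ and the domain of the surgery assembly map is exactly the source above. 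Hence $\theta\colon \mathcal{N}^{\TOP}(N)\to L_n^s(\IZ G)$ is an isomorphism and the next term $L_{n+1}^s(\IZ G)$ surjects onto the structure set image trivially, giving $\mathcal{S}^{\TOP}(N)=\{*\}$. This yields: every homotopy equivalence $M\to N$ is homotopic to a homeomorphism, which is the Borel Conjecture for $G$ in dimension $n$; in particular any two such $M$ and $N$ are homeomorphic (compose a homotopy equivalence, which exists because aspherical complexes with isomorphic $\pi_1$ are homotopy equivalent, with the homeomorphism just produced).

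The main obstacle — and the step requiring the most care rather than the most ideas — is the precise identification of the geometric surgery exact sequence with the algebraic one and the associated decoration/periodicity bookkeeping: the geometric sequence is built from the \emph{connective} $L$-spectrum and uses the $s$-decoration, while the Farrell-Jones assembly map as stated uses $\bfL^{\langle-\infty\rangle}$ and the $4$-periodic (non-connective) spectrum. Reconciling these requires (i) Proposition~\ref{FJC_for_torsionfree_G_and_decoration_s} to pass from $\langle-\infty\rangle$ to $s$, (ii) the observation that for the source $H_*(BG;-)$ the connective and periodic spectra differ only through the homology of $BG$ with coefficients in the negative homotopy of $\bfL(\IZ)$, which is absorbed correctly by the Atiyah–Hirzebruch comparison once one knows the assembly map is an equivalence in all degrees, and (iii) Ranicki's total surgery obstruction machinery to identify $\mathcal{S}^{\TOP}(N)$ with the relevant relative term. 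I would treat (iii) as a black box, citing the standard references, and spend the remaining effort making (i)–(ii) explicit for the orientation character $w=w_1(N)$. The hypothesis $n\ge 5$ enters exactly where it always does: it is needed for the surgery exact sequence (Wall realization and the $\pi$–$\pi$ theorem) to be valid.
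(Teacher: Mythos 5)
Your proposal follows essentially the same route as the paper's proof: the surgery exact sequence reduces everything to showing the simple structure set $\cals_n^s(N)$ is a singleton, Proposition~\ref{FJC_for_torsionfree_G_and_decoration_s} converts the $\langle -\infty\rangle$-decorated assembly isomorphism into the $s$-decorated one, an Atiyah--Hirzebruch comparison between $\bfL(\IZ)\langle 1\rangle$ and $\bfL(\IZ)$ handles the connective-versus-periodic bookkeeping, and the vanishing of $\Wh(G)$ (from the $K$-theoretic hypothesis, via Lemma~\ref{lem:topological_rigid_and_the_structure_set}) upgrades triviality of the structure set to topological rigidity. One small caveat: the connective assembly map $\asmb_n^s\langle 1\rangle$ need only be \emph{injective}, and is generally not surjective in degree $n$ (the $E^2_{n,0}$ term $H_n(BG;\IZ)$ is missing from the connective side), so the claim that $\theta\colon \mathcal{N}^{\TOP}(N)\to L_n^s(\IZ G)$ is an isomorphism overstates what the AHSS comparison gives and what the argument actually requires.
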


We sketch the proof of Theorem~\ref{the:FJC_implies_Borel}. More details can be found for instance
in~\cite[Chapter~19]{Lueck-Macko(2024)}. We need some preparations.

\begin{definition}[Simple structure set]%
\label{def:simple_structure_set}
  Let $N$ be a closed topological manifold of dimension $n$. We call two simple homotopy equivalences
  $f_i\colon M_i \to N$ from closed topological manifolds $M_i$ of dimension $n$ to $N$ for
  $i = 0,1$ equivalent if there exists a homeomorphism  $g\colon M_0 \to M_1$ such that
  $f_1 \circ g$ is homotopic to $f_0$.

  The \emph{simple structure set} $\cals_n^s(N)$
 of $N$ is the set of equivalence classes of simple homotopy
  equivalences $M \to N$ from closed manifolds of dimension $n$ to $N$. This set has a
  preferred base point, namely, the class of the identity $\id\colon N \to N$.
\end{definition}

The elementary proof of the next result is left to the reader.

\begin{lemma}\label{lem:topological_rigid_and_the_structure_set}
  Let $N$ be a closed topological manifold with $\Wh(\pi_1(N)) = \{0\}$.
  Then $N$ is topologically rigid if and only if
  $\cals_n^s(N)$ consists of precisely one element.
\end{lemma}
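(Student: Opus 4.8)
The plan is to unwind both directions of the iff directly from the definition of the simple structure set and the hypothesis $\Wh(\pi_1(N)) = \{0\}$, using the earlier result Theorem~\ref{the:simple_and_tau(f)_vanishes} that the Whitehead torsion is the sole obstruction to a homotopy equivalence being simple. The key point is that when $\Wh(\pi_1(N))$ vanishes, every homotopy equivalence $M \to N$ of finite $CW$-complexes is automatically simple, so the adjective ``simple'' in the definition of $\cals_n^s(N)$ becomes vacuous and $\cals_n^s(N)$ is just the set of equivalence classes of homotopy equivalences $M \to N$ from closed $n$-manifolds, with $M_0 \simeq M_1$ iff there is a homeomorphism $g \colon M_0 \to M_1$ with $f_1 \circ g \simeq f_0$.

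First I would prove the forward implication. Assume $N$ is topologically rigid. The base point $[\id \colon N \to N]$ is one element of $\cals_n^s(N)$; I must show it is the only one. Take any simple homotopy equivalence $f \colon M \to N$ from a closed $n$-manifold $M$. By topological rigidity (Definition~\ref{def:topological_rigid}), $f$ is homotopic to a homeomorphism $g \colon M \to N$. Then $g$ is a homeomorphism with $\id_N \circ g = g \simeq f$, which by definition means $[f] = [\id_N]$ in $\cals_n^s(N)$. Hence $\cals_n^s(N)$ consists of precisely one element. Note this direction does not even need the hypothesis on $\Wh$.

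Next I would prove the converse. Assume $\cals_n^s(N)$ consists of precisely one element, necessarily the base point $[\id_N]$. Let $f \colon M \to N$ be an arbitrary homotopy equivalence from a closed $n$-manifold $M$; I must show $f$ is homotopic to a homeomorphism. Here I invoke the hypothesis $\Wh(\pi_1(N)) = \{0\}$: since $M$ and $N$ are closed manifolds, hence homotopy equivalent to finite $CW$-complexes, and $\tau(f) \in \Wh(\pi_1(N)) = \{0\}$, Theorem~\ref{the:simple_and_tau(f)_vanishes}\,\eqref{the:simple_and_tau(f)_vanishes:obstruction} shows $f$ is a simple homotopy equivalence. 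Therefore $[f] \in \cals_n^s(N)$, so by assumption $[f] = [\id_N]$, which by definition of the equivalence relation means there is a homeomorphism $g \colon M_0 = M \to M_1 = N$ with $\id_N \circ g = g$ homotopic to $f$. Thus $f$ is homotopic to the homeomorphism $g$, and since $f$ was an arbitrary homotopy equivalence with closed-manifold source, $N$ is topologically rigid.

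The argument is essentially a formal unwinding of definitions, so there is no serious obstacle; the only subtlety to be careful about is the asymmetry between the two directions in the use of the hypothesis $\Wh(\pi_1(N)) = \{0\}$ — it is needed only for the converse, to promote an arbitrary homotopy equivalence to a simple one so that it represents a class in $\cals_n^s(N)$ — and the mild point that finiteness of the $CW$-structures (needed to apply Theorem~\ref{the:simple_and_tau(f)_vanishes}) is guaranteed because closed topological manifolds have the homotopy type of finite $CW$-complexes. I would state this last fact explicitly but not belabor it.
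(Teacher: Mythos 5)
Your argument is correct and is the natural one; the paper explicitly leaves the proof of Lemma~\ref{lem:topological_rigid_and_the_structure_set} to the reader as elementary, so there is no written proof to compare against. You correctly identify the asymmetry: the forward direction is pure unwinding of Definitions~\ref{def:topological_rigid} and~\ref{def:simple_structure_set} and does not need $\Wh(\pi_1(N)) = \{0\}$, while the converse uses the vanishing of the Whitehead group together with Theorem~\ref{the:simple_and_tau(f)_vanishes}~\ref{the:simple_and_tau(f)_vanishes:obstruction} (and the fact that closed topological manifolds have the homotopy type of finite $CW$-complexes, cf.\ Theorem~\ref{the:manifolds_and_Poincare}) to show that every homotopy equivalence with closed-manifold source is simple, hence represents a class in $\cals_n^s(N)$.
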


\begin{proof}[Sketch of the proof of Theorem~\ref{the:FJC_implies_Borel}]
Let $G$ be the fundamental  group and $n \ge 5 $ be the dimension
of the aspherical closed topological manifold
$N$.  We assume for simplicity that $N$ is orientable.
Let $\bfL(\IZ)\langle 1 \rangle$ be the $1$-connected cover of $\bfL(\IZ)$.
  This spectrum comes with a map of spectra 
  $\bfi \colon \bfL(\IZ)\langle 1 \rangle \to \bfL(\IZ)$ 
  such  that $\pi_k(\bfi)$ is bijective for $k \ge 1$ and 
  $\pi_k(\bfL(\IZ)\langle 1 \rangle) = 0$ for $k \le 0$. For $k \ge 1$ there is a
  connective simple version of the assembly map
  $\asmb_k^s\colon   H_k\bigl(B\pi;\bfL(\IZ)\bigr) \to  L_k^s(\IZ \pi)$
  appearing in Proposition~\ref{FJC_for_torsionfree_G_and_decoration_s}
  which we denote by
  \[\asmb_k^s\langle 1 \rangle \colon
  H_k\bigl(BG;\bfL(\IZ)\langle 1 \rangle\bigr) \to
  L_k^s(\IZ G).
  \]
  Now there exists an exact sequence of abelian groups coming from the Surgery Exact
  Sequence applied to the aspherical closed manifold $N$
  \begin{multline}
    H_{n+1}\bigl(B\pi;\bfL(\IZ)\langle 1 \rangle\bigr)
    \xrightarrow{\asmb_{n+1}^s\langle 1 \rangle}
    L_{n+1}^s(\IZ \pi) \to \cals_n^s(N)
    \\
    \to H_{n}\bigl(B\pi;\bfL(\IZ)\langle 1 \rangle\bigr)
    \xrightarrow{\asmb_{n}^s\langle 1 \rangle}
    L_n^s(\IZ \pi).
    \label{surgery_exact_sequence}
  \end{multline}
  Since Conjectures~\ref{con:FJC_torsionfree_regular_higher_K-theory}
  and~\ref{con:FJC_torsionfree_L-theory} hold by assumption, we conclude
  from Proposition~\ref{FJC_for_torsionfree_G_and_decoration_s}
  that   that $\asmb_k^s$ is bijective for $k = n,n+1$.
  We have $\asmb_k^s  = \asmb^s_k \circ H_k(\id_{B\pi};\bfi)$. A
  comparison argument of the Atiyah-Hirzebruch spectral sequences shows that 
  $\asmb_{n+1}^s\langle 1 \rangle$ is bijective and in
  particular surjective and that $\asmb_n^s\langle 1 \rangle$
  is injective. We conclude from the sequence~\eqref{surgery_exact_sequence}
  that $\cals_n^s(N)$ consists of one element.
  Lemma~\ref{lem:topological_rigid_and_the_structure_set} implies that
  $N$ is topologically rigid.
\end{proof}

\begin{remark}[The Borel Conjecture in low dimensions]%
  \label{rem:The_Borel_Conjecture_in_low_dimensions}
  The Borel Conjecture is true in dimension $\le 2$ by the classification of closed
  $2$-dimensional manifolds.  It is true in dimension $3$ if Thurston's Geometrization
  Conjecture is true.  This follows from results of Waldhausen, see
  Hempel~\cite[Lemma~10.1 and Corollary~13.7]{Hempel(1976)}, and Turaev,
  see~\cite{Turaev(1988)}, as explained for instance
  in~\cite[Section~5]{Kreck-Lueck(2009nonasph)}.  A proof of Thurston's Geometrization
  Conjecture is given in~\cite{Kleiner-Lott(2008),Morgan-Tian(2014)} following ideas of
  Perelman. In dimension $n = 4$ Theorem~\ref{the:FJC_implies_Borel} is known to be true
  if one assumes that $G$ is good in the sense of Freedman.
\end{remark}

\begin{remark}[The smooth Borel Conjecture holds in dimension $n$ if and only if $n \le 3$]%
\label{rem:The_Borel_Conjecture_does_not_hold_in_the_smooth_category}
The Borel Conjecture~\ref{con:Borel_Conjecture} is false in the smooth category, 
i.e., if
one replaces topological manifold by smooth manifold and homeomorphism by
diffeomorphism. The torus $T^n$ for $ n \ge 5$ is an example,
see~\cite[15A]{Wall(1999)}. The smooth Borel Conjecture is also false in dimension~$4$,
see
Davis-Hayden-Huang-Ruberman-Sunukjian~\cite{Davis-Hayden-Huang-Ruberman-Sunukjian(2024)}.
So the smooth Borel Conjecture is true in dimension $n$ if and only if $n \le 3$, since in
dimension $\le 3$ there is no difference between the smooth and the topological category,
see Moise~\cite{Moise(1952), Moise(1977)},
and the Borel Conjecture~\ref{con:Borel_Conjecture} holds in dimension $\le 3$, see
Remark~\ref{rem:The_Borel_Conjecture_in_low_dimensions}.

\end{remark}

\begin{remark}[The Borel Conjecture versus Mostow rigidity]%
\label{rem:The_Borel_Conjecture_versus_Mostow_rigidity}
  Farrell-Jones~\cite[Theorem~0.1]{Farrell-Jones(1989b)} construct  for given $\epsilon > 0$ a closed
  Riemannian manifold $M_0$ whose sectional curvature lies in the
  interval $[1-\epsilon,-1 + \epsilon]$ and a closed hyperbolic
  manifold $M_1$ such that $M_0$ and $M_1$ are homeomorphic but not
  diffeomorphic.  The idea of the construction is essentially to take
  the connected sum of $M_1$ with exotic spheres.  Note that by
  definition $M_0$ were hyperbolic if we could take $\epsilon = 0$.
  Hence this example is remarkable in view of
  \emph{Mostow rigidity}, 
  which predicts for two closed hyperbolic manifolds $N_0$ and $N_1$
  that they are isometrically diffeomorphic if and only if $\pi_1(N_0)
  \cong \pi_1(N_1)$ and any homotopy equivalence $N_0 \to N_1$ is
  homotopic to an isometric diffeomorphism.

  One may view the Borel Conjecture as the topological version of
  Mostow rigidity.  The conclusion in the Borel Conjecture is weaker,
  one gets only homeomorphisms and not isometric diffeomorphisms, but
  the assumption is also weaker, since there are many more aspherical
  closed topological manifolds than hyperbolic closed manifolds.
\end{remark}

%-----------------------------------------------------------------------------

\subsection{Hyperbolic groups with spheres as boundary}%
\label{subsec:Hyperbolic_groups_with_spheres_as_boundary}

If $G$ is the fundamental group of an $n$-dimensional 
closed Riemannian smooth manifold with negative sectional curvature, 
then $G$ is a hyperbolic group in the sense of Gromov, see for
instance~\cite{Bowditch(1991)},~\cite{Bridson-Haefliger(1999)},~\cite{Ghys-Harpe(1990)},
and~\cite{Gromov(1987)}. Moreover, such a group is torsionfree and its
boundary $\partial G$ is homeomorphic to
a sphere.
This leads to the natural question
whether a torsionfree hyperbolic group with a 
sphere as boundary occurs as
fundamental group of an aspherical closed manifold, see Gromov~\cite[page~192]{Gromov(1993)}. 
In high dimensions this question is answered by the following two theorems
taken from Bartels-L\"uck-Weinberger~\cite{Bartels-Lueck-Weinberger(2010)}.
For the notion of and information about the boundary of a hyperbolic group
and its main properties we refer for instance 
to~\cite{Kapovich+Benakli(2002)}. One of the main ingredients in the proof of
Theorem~\ref{the:Hyperbolic_groups_with_spheres_a_boundary}
is the fact that
both the $K$-theoretic and the $L$-theoretic
Farrell-Jones Conjecture hold for hyperbolic groups,
see~\cite{Bartels-Lueck(2012annals)} and~\cite{Bartels-Lueck-Reich(2008hyper)}.

\begin{theorem}[Hyperbolic groups with  spheres as boundary]%
\label{the:Hyperbolic_groups_with_spheres_a_boundary}
Let $G$ be a torsionfree hyperbolic group and let $n$ be an 
integer  $\geq 6$. Then: 

\begin{enumerate}

\item\label{the:Hyperbolic_groups_with_spheres_a_boundary:equivalent_statements}
The following statements are equivalent:
\begin{enumerate}
  \item\label{the:Hyperbolic_groups_with_spheres_a_boundary:equivalent_statements:sphere}
        The boundary $\partial G$ is homeomorphic to $S^{n-1}$;
  \item\label{the:Hyperbolic_groups_with_spheres_a_boundary:equivalent_statements:manifold}
        There is an aspherical closed topological manifold $M$ 
        such that $G \cong \pi_1(M)$, 
        its universal covering $\widetilde{M}$ is 
        homeomorphic to $\IR^n$
        and the compactification of $\widetilde{M}$ by 
        $\partial G$ is homeomorphic to $D^n$;
\end{enumerate}

\item\label{the:Hyperbolic_groups_with_spheres_a_boundary:uniqueness}
The aspherical closed topological manifold $M$ appearing in the assertion above
is unique up to homeomorphism.
\end{enumerate}
\end{theorem}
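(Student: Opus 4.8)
The plan is to build, for a torsionfree hyperbolic group $G$ whose boundary $\partial G$ is homeomorphic to $S^{n-1}$, an aspherical closed topological manifold with fundamental group $G$, and then to deduce uniqueness from the Borel Conjecture, which is available because $G$ is a Farrell-Jones group. The starting point is that $G$, being torsionfree hyperbolic, has a finite model for $BG$ (the Rips complex), so $G$ is a finitely presented Poincar\'e duality group of formal dimension $n$ once one knows that $H^*(G;\IZ G)$ is concentrated in degree $n$ and is isomorphic to $\IZ$ there; this cohomological computation is exactly what the hypothesis $\partial G \cong S^{n-1}$ delivers, via Bestvina--Mess, since the boundary controls the cohomology at infinity of the Rips complex, and $\widetilde{H}^{k-1}(\partial G) = H^k(G;\IZ G)$ for an appropriate ($\mathbb{Z}$-coefficient, high-dimensional) version of the Rips complex.

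Next I would invoke Ranicki's total surgery obstruction machinery: a finitely presented Poincar\'e duality group $G$ of dimension $n\ge 6$ is the fundamental group of an aspherical closed topological manifold if and only if a single obstruction in the generalized homology group $\mathbb{S}_n(BG)$ (the structure group fitting in the algebraic surgery exact sequence) vanishes. Here is where the Farrell-Jones Conjecture enters decisively: since hyperbolic groups satisfy the full $K$- and $L$-theoretic Farrell-Jones Conjecture (and are torsionfree, so Conjecture~\ref{con:The_Farrell-Jones_Conjecture_for_middle_and_negative_K-theory} and Conjecture~\ref{con:FJC_torsionfree_L-theory} hold for $R=\IZ$), the assembly map $H_n(BG;\bfL(\IZ)) \to L_n^s(\IZ G)$ is an isomorphism in all degrees, which forces the relevant structure group $\mathbb{S}_*(BG)$ to vanish. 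Hence the total surgery obstruction is automatically zero, and one obtains a closed manifold $M$ homotopy equivalent to $BG$; asphericity of $M$ is then automatic because its universal cover is homotopy equivalent to $\widetilde{BG}$, which is contractible. The compactification statement --- that $\widetilde{M}\cong\IR^n$ and $\widetilde{M}\cup\partial G\cong D^n$ --- follows from the $\mathcal{Z}$-set compactification theory for Bestvina--Mess boundaries, together with the fact (using Freedman--Quinn in dimension $n\ge 6$, or rather the high-dimensional Poincar\'e-type recognition) that a contractible open manifold with this kind of boundary structure is standard; this is essentially the content of the manifold case of the Bestvina--Mess / Bartels--L\"uck--Weinberger analysis. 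Conversely, if such an $M$ exists, then $\partial G$ is the boundary of $\widetilde{M}\cup\partial G\cong D^n$, hence homeomorphic to $S^{n-1}$, giving the reverse implication trivially.

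For the uniqueness clause~\eqref{the:Hyperbolic_groups_with_spheres_a_boundary:uniqueness}, I would argue as follows: any two such manifolds $M_0,M_1$ are aspherical closed topological manifolds of dimension $n\ge 6 \ge 5$ with isomorphic fundamental group $G$, and since aspherical complexes are determined up to homotopy by their fundamental group, $M_0\simeq M_1$. Because $G$ satisfies the Farrell-Jones Conjectures, Theorem~\ref{the:FJC_implies_Borel} applies and gives the Borel Conjecture for $G$ in dimension $n$, so $M_0$ and $M_1$ are homeomorphic. The main obstacle I expect is not the surgery-theoretic input --- that is now essentially formal once Farrell-Jones is known --- but rather the geometric-topology step identifying the compactification $\widetilde{M}\cup\partial G$ with $D^n$: one must show the $\mathcal{Z}$-set boundary genuinely collars, which requires the disjoint-disks property for $\partial G\cong S^{n-1}$ sitting in the boundary of $\widetilde M$ and an application of the characterization of $D^n$ among compact contractible manifolds with sphere boundary; handling this carefully in the topological category in dimension $n\ge 6$ is the delicate part, and it is exactly where the hypothesis $n\ge 6$ (as opposed to $n\ge 5$) gets used.
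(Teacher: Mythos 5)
Your skeleton---Bestvina--Mess to establish Poincar\'e duality, surgery theory plus the Farrell--Jones isomorphism to produce a manifold, and the Borel Conjecture for uniqueness---is essentially the route taken by Bartels--L\"uck--Weinberger, whom the paper cites for this theorem without reproducing the argument. The central claim, however---that because the Farrell--Jones assembly map $H_*(BG;\bfL(\IZ))\to L_*(\IZ G)$ is an isomorphism, Ranicki's total surgery obstruction group $\mathbb{S}_n(BG)$ vanishes---is wrong, and this is exactly where the real content of the theorem sits.

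The algebraic surgery exact sequence in which $\mathbb{S}_n(BG)$ lives is built from the $1$-connective cover $\bfL\langle 1\rangle$, whereas the Farrell--Jones isomorphism is formed with the periodic spectrum $\bfL(\IZ)$. For an orientable Poincar\'e complex $BG$ of formal dimension $n$ the comparison map $H_n(BG;\bfL\langle 1\rangle)\to H_n(BG;\bfL(\IZ))$ fails to be surjective; the Atiyah--Hirzebruch spectral sequence shows its cokernel detects the term $H_n(BG;L_0(\IZ))\cong\IZ$. Consequently, the Farrell--Jones isomorphism only kills the \emph{four-periodic} total surgery obstruction, which produces a compact homology $\ANR$-manifold with the disjoint disk property, not a closed topological manifold; this is precisely the point made in Remark~\ref{rem:homology_ANR-manifolds_versus_topological_manifolds}. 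To promote a homology $\ANR$-manifold to a genuine manifold you must show the Quinn resolution obstruction vanishes, and Bartels--L\"uck--Weinberger need a separate geometric argument, using the compactification picture beyond mere Poincar\'e duality, to do so. You identify the $\mathcal{Z}$-set collaring as the delicate step, but the resolution obstruction is the actual bottleneck, and your proposal does not address it. A smaller point: as Remark~\ref{rem:Erratum} records, the existence of a $\TOP$-reduction of the Spivak normal fibration of a compact homology $\ANR$-manifold is not automatic and has to be supplied in general (the cited erratum confirms the present theorem is unaffected, but this should not be passed over silently). The uniqueness clause via Theorem~\ref{the:FJC_implies_Borel} and the Borel Conjecture in dimension $n\ge 6\ge 5$ is correct as stated.
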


%-----------------------------------------------------------------------------

\subsection{Product decomposition of aspherical closed manifolds}%
\label{subsec:Product_decomposition_of_aspherical_closed_manifolds}

In this subsection we show that, roughly speaking,
an aspherical closed  topological manifold $M$ is a product $M_1 \times M_2$ if and
only if its fundamental group is a product $\pi_1(M) = G_1 \times
G_2$ and that such a decomposition is  unique up to homeomorphism.

\begin{theorem}[Product decompositions of aspherical closed  manifolds]%
  \label{the:Product_decompositions_of_aspherical_closed_manifolds}
  Let $M$ be an aspherical  closed topological manifold of dimension $n$ with
fundamental group $G = \pi_1(M)$. 
Suppose we have a product decomposition
\[
p_1 \times p_2 \colon G \xrightarrow{\cong}  G_1 \times G_2.
\]
Suppose that $G$, $G_1$, and $G_2$  satisfy  Conjectures~\ref{con:FJC_torsionfree_regular_higher_K-theory}
  and~\ref{con:FJC_torsionfree_L-theory} for $R = \IZ$.

Then $G$, $G_1$, and $G_2$ are Poincar\'e duality groups whose
cohomological dimensions satisfy
\[
n = \cd(G) = \cd(G_1) + \cd(G_2).
\]
Suppose additionally:

\begin{itemize}

\item the cohomological dimension $\cd(G_i)$ is different from $3$, $4$, and
      $5$ for $i = 1,2$;
\item $n \not = 4$ or ($n = 4$ and $G$ is good in the sense
      of Freedman).

\end{itemize}

Then:

\begin{enumerate}

\item\label{the:product_decomposition:(1)}
      There are aspherical  closed  topological manifolds $M_1$ and
      $M_2$ together with isomorphisms
      \[
      v_i \colon \pi_1(M_i) \xrightarrow{\cong}  G_i
      \]
      and maps
      \[
      f_i \colon M \to M_i
      \]
      for $i = 1,2$  such that
      \[
      f = f_1 \times f_2 \colon  M \to M_1 \times M_2
      \]
      is a homeomorphism and $v_i \circ  \pi_1(f_i) = p_i$ (up to inner
      automorphisms) for $i = 1,2$;

\item\label{the:product_decomposition:(2)}
      Suppose we have another such choice of aspherical  closed  topological manifolds $M_1'$ and
      $M_2'$ together with isomorphisms
      \[
      v_i' \colon \pi_1(M_i') \xrightarrow{\cong}  G_i
      \]
      and maps
      \[
      f_i' \colon M \to M_i'
      \]
      for $i = 1,2$ such that the map $f' = f_1' \times f_2'$ is a homotopy
      equivalence and  $v_i' \circ \pi_1(f_i') = p_i$ (up to inner
      automorphisms) for $i = 1,2$. Then there are for $i = 1,2$
      homeomorphisms
      $h_i \colon M_i \to M_i'$
      such that $h_i \circ f_i \simeq f_i'$ and $v_i \circ \pi_1(h_i)
      = v_i'$ holds for $i = 1,2$.

\end{enumerate}
\end{theorem}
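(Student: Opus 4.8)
The plan is to proceed in two stages: first extract the purely group-theoretic and homotopy-theoretic consequences (the Poincaré duality statement and the existence of the target manifolds $M_i$), then invoke topological rigidity via the Borel Conjecture to upgrade homotopy equivalences to homeomorphisms. First I would establish that $G$, $G_1$, $G_2$ are Poincaré duality groups. Since $M$ is an aspherical closed manifold of dimension $n$ with $\pi_1(M) = G$, the group $G$ is a Poincaré duality group of dimension $n$ by definition ($BG \simeq M$ is a closed manifold). To see that the factors $G_i$ are Poincaré duality groups, the key input is that Conjecture~\ref{con:FJC_torsionfree_regular_higher_K-theory} (applied with $R = \IZ$) forces $\widetilde{K}_0(\IZ G_i) = 0$, so each $G_i$ has trivial finiteness obstruction; combined with the fact that $G_i$ is finitely presented and of type (FP) — which follows since $G = G_1 \times G_2$ is of type (FP) and a retract of a group of type (FP) over $\IZ$ is again of type (FP) (using $\widetilde K_0 = 0$ to get from (FP) to (FF), i.e. Serre's conjecture, Conjecture~\ref{con:Serre}) — one gets that $G_i$ admits a finite model for $BG_i$. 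A standard Künneth/Poincaré-duality argument then shows $H^*(G_i; \IZ G_i)$ is concentrated in a single degree $\cd(G_i)$ and is infinite cyclic there, hence $G_i$ is a Poincaré duality group, and the Künneth formula gives $n = \cd(G) = \cd(G_1) + \cd(G_2)$.

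Next I would produce the manifolds $M_1, M_2$. Since each $G_i$ is a Poincaré duality group of formal dimension $d_i := \cd(G_i) \ge 2$ with $d_1 + d_2 = n$ and, by hypothesis, $d_i \notin \{3,4,5\}$, a theorem in the spirit of Wall (realizing Poincaré duality groups of dimension $\ge 5$, or $\le 2$, by aspherical closed topological manifolds — this uses surgery theory and the vanishing statements of Conjectures~\ref{con:FJC_torsionfree_regular_higher_K-theory} and~\ref{con:FJC_torsionfree_L-theory} for $G_i$, exactly as in the proof of Theorem~\ref{the:FJC_implies_Borel}) yields aspherical closed topological manifolds $M_i$ with $\pi_1(M_i) \cong G_i$; in the case $d_i \le 2$ this is classical surface/curve theory. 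The dimension restriction $d_i \notin \{3,4,5\}$ is what makes the surgery-theoretic realization unobstructed: in dimension $3$ one would need Geometrization, and dimensions $4, 5$ are excluded to keep both $M_i$ and their product in the range where the $s$-cobordism theorem and surgery apply cleanly (note $n = d_1 + d_2 \ge 2 + 6 = 8$ once both $d_i \ge 6$, though one must also allow one factor small — in that case the other is large and one argues on $M$ directly). Having built $M_i$, the isomorphisms $v_i \colon \pi_1(M_i) \xrightarrow{\cong} G_i$ and the composite $M \simeq B G \to BG_1 \times BG_2 \simeq M_1 \times M_2$ (induced by $p_1 \times p_2$) give a homotopy equivalence $f = f_1 \times f_2$ with $v_i \circ \pi_1(f_i) = p_i$ up to inner automorphism, by the homotopy-theoretic characterization of maps between aspherical complexes recalled in the excerpt.

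Finally, for assertion~\eqref{the:product_decomposition:(1)} I would promote $f$ to a homeomorphism. The target $M_1 \times M_2$ is an aspherical closed topological manifold of dimension $n$; its fundamental group $G$ satisfies Conjectures~\ref{con:The_Farrell-Jones_Conjecture_for_middle_and_negative_K-theory} and~\ref{con:FJC_torsionfree_L-theory} (these follow from~\ref{con:FJC_torsionfree_regular_higher_K-theory} and~\ref{con:FJC_torsionfree_L-theory}), so by Theorem~\ref{the:FJC_implies_Borel} (for $n \ge 5$, which holds here) or by the Freedman-category caveat when $n = 4$, the Borel Conjecture holds for $G$ in dimension $n$; hence $f$ is homotopic to a homeomorphism, and after composing we may assume $f = f_1 \times f_2$ itself is a homeomorphism respecting the $\pi_1$-data. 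For the uniqueness statement~\eqref{the:product_decomposition:(2)}, given another such choice $(M_i', v_i', f_i')$, the map $f' \circ f^{-1} \colon M_1 \times M_2 \to M_1' \times M_2'$ is a homotopy equivalence of aspherical closed manifolds inducing the identity on $G_1 \times G_2$ compatibly with the splittings; projecting, one gets homotopy equivalences $M_i \to M_i'$ compatible with $v_i, v_i'$, which are homotopic to homeomorphisms $h_i$ by the Borel Conjecture applied to $G_i$ (using that $\cd(G_i) \notin\{3,4,5\}$, so $\cd(G_i)\ge 6$ or $\le 2$, and $G_i$ satisfies the Farrell--Jones conjectures) and by construction $h_i \circ f_i \simeq f_i'$, $v_i \circ \pi_1(h_i) = v_i'$.

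\textbf{Main obstacle.} The hard part will be the realization step (constructing $M_1$ and $M_2$) together with correctly handling the boundary cases on the dimensions $d_i$: the surgery-theoretic realization of a Poincaré duality group as an aspherical closed manifold requires not only the vanishing of surgery obstructions (guaranteed by the Farrell--Jones input on $L$-theory of $G_i$) but also a total surgery obstruction argument à la Ranicki, and the exclusion $d_i \notin \{3,4,5\}$ is precisely what circumvents the failure of this machinery in low-middle dimensions — verifying that the exceptional small cases ($d_i = 1, 2$) are covered by classical arguments, and that the product $M_1 \times M_2$ lands in a dimension where the Borel Conjecture as stated (i.e.\ $n \ge 5$, or $n = 4$ with goodness) applies, is the delicate bookkeeping at the heart of the proof.
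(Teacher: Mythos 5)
Your overall architecture (PD groups $\to$ realization of the factors $\to$ Borel rigidity) is the right conceptual skeleton, and it is a genuinely different route from what appears in the paper: the paper's published proof for $n\neq 3$ is a one-line citation to~\cite[Theorem~6.1]{Lueck(2010asph)}, supplemented for $n=3$ by invoking Hempel's splitting theorem~\cite[Theorem~11.1]{Hempel(1976)} to conclude $G_1\cong\IZ$ and $G_2$ a surface group (or vice versa), followed by the Borel Conjecture in dimension $\le 3$. Your sketch is effectively an attempt to reconstruct the argument of that external reference, so the two routes are complementary rather than the same.

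The genuine gap is in the realization step, and it is not merely ``delicate bookkeeping.'' You appeal to ``a theorem in the spirit of Wall'' realizing a Poincar\'e duality group $G_i$ of dimension $d_i\ge 6$ (or $\le 2$) as the fundamental group of an aspherical closed topological manifold. No such theorem exists unconditionally: this is precisely Conjecture~\ref{con:Poincare-duality-groups}, which the paper states as an open conjecture. The machinery that does exist, Theorem~\ref{the:Borel_FJC_Poincare_complexes} together with~\cite{Bryant-Ferry-Mio-Weinberger(1996)}, produces only a compact homology $\ANR$-manifold and, per the erratum discussed in Remark~\ref{rem:Erratum}, requires the additional hypothesis that the Spivak normal fibration of an aspherical finite Poincar\'e complex for $G_i$ admits a $\TOP$-reduction; moreover one must then kill Quinn's resolution obstruction in $L_0(\IZ)$ to pass from a homology $\ANR$-manifold to an honest closed manifold. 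Your proposal never uses the one piece of data that makes this feasible here, namely that $M$ itself is already a closed topological manifold: the resolution obstruction is multiplicative, takes values in $1+8\IZ$, and equals $1$ for $M\simeq BG_1\times BG_2$, which forces both factors' obstructions to equal $1$; a parallel argument handles the $\TOP$-reducibility of the Spivak fibrations of the factors. Without invoking these facts, the construction of $M_1$ and $M_2$ does not go through. Separately, the low-dimensional boundary case $n=3$ (with one factor of cohomological dimension $1$ and the other of dimension $2$) requires the cited theorem of Hempel rather than surgery, and your remark that ``one argues on $M$ directly'' does not supply that ingredient.
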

\begin{proof}
  The case $n \not=3$ is proved in~\cite[Theorem~6.1]{Lueck(2010asph)}. The case $n = 3$
  is done as follows.  We conclude from~\cite[Theorem~11.1 on page~100]{Hempel(1976)} that
  $G_1 \cong \IZ \cong \pi_1(S^1) $ and $G_2$ is the fundamental group $\pi_1(F)$ of a
  closed surface or the other way around.  Now use the fact that the Borel Conjecture is
  true in dimensions $\le 3$.
\end{proof}

%-----------------------------------------------------------------------------

\subsection{The stable Cannon Conjecture}%
\label{subsec:The_stable_Cannon_Conjecture}

Tremendous progress in the theory of $3$-manifolds has been made during the last two decades.
A proof of Thurston's  Geometrization Conjecture for orientable $3$-manifolds
is given in~\cite{Kleiner-Lott(2008)},~\cite{Morgan-Tian(2014)}
following ideas of Perelman. The Virtually Fibering Conjecture was settled by
the work of Agol, Liu, Przytycki-Wise, and 
Wise~\cite{Agol(2008),Agol(2013),Liu(2013), Przytycki-Wise(2014), Przytycki-Wise(2018),Wise(2012raggs),Wise(2012hierachy)}.

 However, the following famous conjecture, taken
 from~\cite[Conjecture~5.1]{Cannon-Swenson(1998)}, is still open at the time of writing.

 \begin{conjecture}[Cannon Conjecture]\label{con:Cannon_conjecture}
   Let $G$ be a
  hyperbolic group. Suppose that its boundary is homeomorphic to $S^2$.

  Then $G$ acts properly cocompactly and isometrically on the $3$-dimensional hyperbolic
  space.
\end{conjecture}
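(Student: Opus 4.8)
[Proposed approach (recall that Conjecture~\ref{con:Cannon_conjecture} is still open)]
The plan is to run the now-standard analytic reduction of the conjecture and then indicate where the genuine difficulty lies. Since $G$ is hyperbolic, it acts on its boundary $\partial G\cong S^2$ as a \emph{uniform convergence group}, i.e.\ properly discontinuously and cocompactly on the space of pairwise distinct triples of $\partial G$. It would suffice to upgrade this topological action to one that is \emph{uniformly quasiconformal} with respect to the standard conformal structure on $S^2$: by Sullivan--Tukia rigidity such a group is topologically conjugate to a group of M\"obius transformations, and a uniform convergence group of M\"obius transformations of $S^2=\partial\IH^3$ is a convex-cocompact Kleinian group whose limit set is all of $S^2$, hence a cocompact lattice acting properly, cocompactly and isometrically on $\IH^3$. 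So the entire problem is to recognise the round sphere inside the quasisymmetry gauge of $\partial G$ equipped with a visual metric.

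First I would invoke the Bonk--Kleiner quasisymmetric uniformisation theorem: an Ahlfors $2$-regular metric space homeomorphic to $S^2$ is quasisymmetrically equivalent to the standard $S^2$ provided it is linearly locally connected. A visual metric on $\partial G$ is automatically Ahlfors regular in its own Hausdorff dimension, and linear local connectivity holds here because $\partial G\cong S^2$ and boundaries of hyperbolic groups have no local cut points once they are connected and neither a point nor a circle. Hence Conjecture~\ref{con:Cannon_conjecture} is equivalent to the statement that $\partial G$ admits a visual metric of Hausdorff dimension exactly $2$, equivalently that the Ahlfors regular conformal dimension of $\partial G$ equals $2$ and is attained. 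An alternative but essentially equivalent route is Cannon's combinatorial Riemann mapping theorem, in the form used by Cannon--Swenson: one extracts from the combinatorics of $G$ a sequence of ever finer shinglings of $\partial G$ and must verify that the associated combinatorial moduli of annular curve families satisfy a two-sided conformality estimate; for a hyperbolic group with $2$-sphere boundary one of the two estimates is automatic, so the content is the complementary modulus bound.

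The hard part --- and the reason the conjecture remains open --- is precisely this last step. The conformal dimension of $\partial G$ is always at least $2$, but there is no known mechanism forcing it down to $2$ (for boundaries of other hyperbolic groups it can be strictly larger), nor is it known in general that the infimum defining it is realised; a uniform-in-scale control of the modulus of curve families on $\partial G$ is exactly the missing analytic input. The strongest available evidence is the \emph{stable} Cannon Conjecture, which \emph{is} a theorem: combining the Farrell--Jones Conjecture for hyperbolic groups (which holds, and already underlies Theorem~\ref{the:Hyperbolic_groups_with_spheres_a_boundary}) with surgery theory in the spirit of Bartels--L\"uck--Weinberger, one shows that $G$ is a $3$-dimensional Poincar\'e duality group and that, after crossing with a suitable $\IZ^k$, the group $G\times\IZ^k$ is the fundamental group of an aspherical closed manifold with Euclidean universal covering. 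Passing from such a homotopy- and surgery-theoretic existence statement to the sharp conclusion in dimension $3$ with an \emph{isometric} action appears to require either pushing surgery below its range of validity or, more plausibly, carrying out the direct geometric-analytic argument above; I expect that modulus estimate to be where essentially all of the difficulty is concentrated.
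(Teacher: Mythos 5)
The statement you were handed is Conjecture~\ref{con:Cannon_conjecture}, which the paper records as an open problem; there is therefore no proof in the paper to compare against, and your framing of the exercise as ``indicate the standard reduction and locate the gap'' is the right one. Your survey is accurate: the passage from the uniform convergence action on $\partial G\cong S^2$ to a quasisymmetric uniformisation problem via Sullivan--Tukia, the Bonk--Kleiner criterion with linear local connectivity and Ahlfors regularity of visual metrics, the equivalent Cannon--Swenson combinatorial-modulus formulation, and the identification of the missing two-sided modulus estimate (equivalently, the question whether the Ahlfors regular conformal dimension of $\partial G$ equals $2$ and is attained) as the genuine obstruction all reflect current understanding, and the paper itself points to Cannon--Swenson and Bonk--Kleiner for exactly this.

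One attribution should be corrected. You write that Farrell--Jones plus surgery ``shows that $G$ is a $3$-dimensional Poincar\'e duality group''; in fact that a torsionfree hyperbolic group with $\partial G\cong S^2$ is a $3$-dimensional Poincar\'e duality group is a theorem of Bestvina and Mess, and it serves as an \emph{input} to, not an output of, Theorem~\ref{the:stable_Cannon_Conjecture} (whose hypothesis is precisely ``hyperbolic $3$-dimensional Poincar\'e duality group''). What Farrell--Jones plus surgery buys is the existence of the closed manifold $M$ with a simple-homotopy degree-one normal map to $BG\times N$ and, when $N$ is aspherical of dimension $\ge 3$, the identification $\widehat{M}\cong\IR^3\times N$. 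You might also sharpen your final remark: once a closed aspherical topological $3$-manifold with fundamental group $G$ were produced, Geometrization would supply the hyperbolic structure, so the only barrier is the dimension reduction in surgery or the analytic modulus estimate, not the passage from a topological to an isometric conclusion.
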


In the torsionfree case it boils down to the following conjecture.

\begin{conjecture}[Cannon Conjecture in the torsionfree case]%
\label{con:Cannon_conjecture_in_the_torsionfree_case}
  Let $G$ be a torsionfree hyperbolic group. Suppose that its boundary is homeomorphic to
  $S^2$.

  Then $G$ is the fundamental group of a closed hyperbolic $3$-manifold.
\end{conjecture}

More information about Conjecture~\ref{con:Cannon_conjecture}
and its status can be found for instance
in~\cite[Section~2]{Ferry-Lueck-Weinberger(2019)} and~\cite{Bonk-Kleiner(2005)}.

The following theorem is taken from~\cite[Theorem~2]{Ferry-Lueck-Weinberger(2019)}.  It is
a stable version of the Conjecture~\ref{con:Cannon_conjecture_in_the_torsionfree_case}
above.  Its proof is based on high-dimensional surgery theory, the theory of homology
$\ANR$-manifolds, and the proof of both the $K$-theoretic and the $L$-theoretic
Farrell-Jones Conjecture for hyperbolic groups, see~\cite{Bartels-Lueck(2012annals)}
and~\cite{Bartels-Lueck-Reich(2008hyper)}.

\begin{theorem}[(Stable Cannon Conjecture)]\label{the:stable_Cannon_Conjecture}
  Let $G$ be a hyperbolic $3$-dimensional Poincar\'e duality group.  Let $N$ be any
  smooth, $\PL$, or topological manifold respectively, that  is closed and whose dimension is
  $\ge 2$.

  Then there is a closed smooth, $\PL$,  or topological manifold $M$ and a normal map
  of degree one
  \[
    \xymatrix{TM \oplus \underline{\IR^a} \ar[d]\ar[r]^-{\underline{f}} & \xi \times TN
      \ar[d]
      \\
      M\ar[r]^-f & BG \times N }
  \]
  satisfying

  \begin{enumerate}

  \item\label{the:stable_Cannon_Conjecture:simple} The map $f$ is a simple homotopy
    equivalence;

  \item\label{the:stable_Cannon_Conjecture:universal_covering} Let $\widehat{M} \to M$ be
    the $G$-covering associated to the composite of the isomorphism
    $\pi_1(f) \colon \pi_1(M) \xrightarrow{\cong} G \times \pi_1(N)$ with the projection
    $G \times \pi_1(N) \to G$. Suppose additionally that $N$ is aspherical, 
    $\dim(N) \ge 3$, and $\pi_1(N)$ is a Farrell-Jones group
    in the sense of Definition~\ref{def:Farrell-Jones-group}.

    Then $\widehat{M}$ is homeomorphic to $\IR^3 \times N$. Moreover, there is a compact topological
    manifold $\overline{\widehat{M}}$ whose interior is homeomorphic to $\widehat{M}$ and
    for which there exists a homeomorphism of pairs
    $(\overline{\widehat{M}},\partial \overline{\widehat{M}}) \to (D^3 \times N, S^2
    \times N)$.

  \end{enumerate}
\end{theorem}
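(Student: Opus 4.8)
The plan is to realize the desired manifold $M$ as the source of a degree-one normal map into $BG \times N$, using that a $3$-dimensional Poincar\'e duality group which is hyperbolic has boundary $S^2$ (this is part of the hypothesis/setup connected to the Cannon Conjecture), and then run high-dimensional surgery to make the map a simple homotopy equivalence. First I would produce a finite Poincar\'e complex model for $BG$ in dimension $3$: since $G$ is a $3$-dimensional Poincar\'e duality group, $BG$ is a finite $3$-dimensional Poincar\'e complex (after replacing it by a finite model, which exists because $3$-dimensional $PD$ groups are finitely presented of type $FP$ and, by Wall/Browder theory together with the Farrell-Jones input forcing $\widetilde{K}_0(\mathbb{Z}G)=0$, even finitely dominated implies finite). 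Then $BG \times N$ is a finite Poincar\'e complex of dimension $\ge 5$, which is the regime where surgery works. The product $\xi \times TN$ with $\xi$ a suitable bundle over $BG$ provides the normal data: one needs the Spivak normal fibration of $BG$ to admit a topological (or smooth, or $PL$) reduction, and here the stable range and the low dimension $3$ of $BG$ make the obstruction vanish after crossing with $N$ (the relevant obstruction lives in $[BG \times N, B(\mathrm{TOP}/G)]$ or the analogous $PL$/$O$ classifying space, and stabilizing by $\underline{\mathbb{R}^a}$ kills it). This yields the normal map in the diagram; the surgery obstruction to making it a simple homotopy equivalence lies in $L^s_{n+3}(\mathbb{Z}[G \times \pi_1(N)])$, and by Proposition~\ref{FJC_for_torsionfree_G_and_decoration_s} applied to the Farrell-Jones group $G \times \pi_1(N)$ (a product of Farrell-Jones groups, hence Farrell-Jones, and torsionfree), together with the vanishing of $\mathrm{Wh}$, $\widetilde{K}_0$, and negative $K$-groups, the assembly map is an isomorphism and the obstruction is detected homologically; a direct check shows it vanishes, so after surgery we get statement~\eqref{the:stable_Cannon_Conjecture:simple}.

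For part~\eqref{the:stable_Cannon_Conjecture:universal_covering} I would analyze the covering $\widehat{M} \to M$ classified by $\pi_1(M) \xrightarrow{\pi_1(f),\cong} G \times \pi_1(N) \to G$. Pulling back the simple homotopy equivalence $f$ along this covering gives a proper homotopy equivalence $\widehat{M} \to EG \times N \simeq_{\text{h}} \widetilde{BG} \times N$, and since $BG$ is a $3$-dimensional aspherical Poincar\'e complex with $\partial G \cong S^2$, one wants $\widetilde{BG}$ to be homeomorphic to $\mathbb{R}^3$. This is exactly where the stable version improves on the open Cannon Conjecture: I would not claim $\widetilde{BG} \cong \mathbb{R}^3$ outright (that is essentially the Cannon Conjecture) but rather show $\widehat{M} \cong \mathbb{R}^3 \times N$ directly. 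The mechanism is the $\alpha$-approximation theorem / controlled surgery: the map $\widehat{M} \to \widetilde{BG} \times N$ can be controlled using the hyperbolic geometry of $G$ acting on $\widetilde{BG}$ with $\overline{\widetilde{BG}} \cong D^3$ a compactification with boundary $\partial G \cong S^2$; since $N$ is aspherical with $\pi_1(N)$ Farrell-Jones, the relevant $L$-theory with control (or $UV^1$-control over the compactified base) vanishes, and one invokes the characterization of $\mathbb{R}^n \times N$ via its end structure and the disjoint disks property for the Poincar\'e duality group $G$. The compactification statement $(\overline{\widehat{M}}, \partial \overline{\widehat{M}}) \cong (D^3 \times N, S^2 \times N)$ then follows by transporting the $Z$-set compactification $\overline{\widetilde{BG}} \cong D^3$ through the controlled homeomorphism, using that $N$ is a closed manifold so the product with $N$ respects the compactification.

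The main obstacle will be the controlled/bounded surgery argument in part~\eqref{the:stable_Cannon_Conjecture:universal_covering}: one must show that the proper homotopy equivalence $\widehat{M} \to \widetilde{BG} \times N$ is properly homotopic to a homeomorphism, and the honest input here is that $\widetilde{BG}$ admits a $Z$-set compactification by $S^2$ coming from the hyperbolic boundary, so that bounded surgery theory over the open cone on $S^2$ (equivalently, the cone on $\partial G$) applies; the vanishing of the bounded structure set requires the bounded/controlled $L$-theory assembly over $\mathrm{cone}(S^2) \times N$ to be an isomorphism, which reduces, via Farrell-Jones for $G \times \pi_1(N)$ and the known computation $K_*, L_*$ of the trivial group tensored appropriately, to a statement about the homology of a sphere. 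Making the identification of this assembly precise --- in particular ensuring the decorations match so that $\mathrm{Wh}$ and $\widetilde{K}_0$ genuinely drop out, and handling the non-simply-connected fibre $N$ inside the controlled category --- is the technical heart; everything else (existence of the normal map, vanishing of the ordinary surgery obstruction, the finiteness of the Poincar\'e model for $BG$) is comparatively routine given the Farrell-Jones input cited for hyperbolic groups in \cite{Bartels-Lueck(2012annals)} and \cite{Bartels-Lueck-Reich(2008hyper)} and for $\pi_1(N)$ by Definition~\ref{def:Farrell-Jones-group}.
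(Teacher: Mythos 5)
The paper itself does not prove Theorem~\ref{the:stable_Cannon_Conjecture}; it cites~\cite[Theorem~2]{Ferry-Lueck-Weinberger(2019)} and records only that the proof rests on three pillars: high-dimensional surgery theory, the theory of homology $\ANR$-manifolds, and the Farrell--Jones Conjecture for hyperbolic groups. Your sketch captures the first and third pillars but is missing the second, and in Part~(i) it invokes a hypothesis you do not actually have.

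The decisive gap is in Part~(i). You reduce the surgery obstruction to an element of $L^s_{n+3}(\IZ[G\times\pi_1(N)])$ and then kill it by applying Proposition~\ref{FJC_for_torsionfree_G_and_decoration_s} to the group $G\times\pi_1(N)$, which you describe as ``a product of Farrell--Jones groups, hence Farrell--Jones, and torsionfree.'' But in Part~(i) the manifold $N$ is completely arbitrary of dimension $\ge 2$: no asphericity, no Farrell--Jones assumption, not even torsionfreeness of $\pi_1(N)$. Thus $G\times\pi_1(N)$ need not be a Farrell--Jones group and need not be torsionfree, and Proposition~\ref{FJC_for_torsionfree_G_and_decoration_s} simply does not apply. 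The Farrell--Jones hypothesis on $\pi_1(N)$ enters only in Part~(ii). The actual argument must exploit only Farrell--Jones for $G$ together with the fact that $N$ is already a genuine closed manifold (for instance via the Mishchenko--Ranicki symmetric signature of $N$ and a product structure on the surgery problem), rather than quoting Farrell--Jones for the product group.

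Even setting this aside, your treatment of the surgery step papers over the exact issue that the paper flags under ``the theory of homology $\ANR$-manifolds.'' You write that ``the assembly map is an isomorphism and the obstruction is detected homologically; a direct check shows it vanishes.'' But the Farrell--Jones assembly isomorphism concerns the periodic spectrum $\bfL$, whereas the surgery obstruction and the normal invariants live over the $1$-connective cover $\bfL\langle 1\rangle$; the discrepancy is precisely the resolution obstruction of Quinn (see Remark~\ref{rem:homology_ANR-manifolds_versus_topological_manifolds}). The Farrell--Jones input by itself only delivers a compact homology $\ANR$-manifold with the disjoint disk property (as in Theorem~\ref{the:Borel_FJC_Poincare_complexes}), not a closed topological manifold, and one has to argue separately that the Quinn index is trivial --- which is where the low dimension of $BG$ and the presence of the honest manifold factor $N$ are actually used. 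Your sketch never names this obstruction. Your outline of Part~(ii), via a $Z$-set compactification of the universal cover by $\partial G\cong S^2$ and bounded surgery over the open cone, is in the right spirit and matches the Ferry--L\"uck--Weinberger approach at the level of keywords, though of course the technical content there is substantial and is also not reproduced here.
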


If we could choose $N = \pt$ in Theorem~\ref{the:stable_Cannon_Conjecture}, it would imply
Conjecture~\ref{con:Cannon_conjecture_in_the_torsionfree_case}.

%-----------------------------------------------------------------------------

\subsection{Automorphisms groups of aspherical closed manifolds}%
\label{subsec:Automorphisms_groups_of_aspherical_closed_manifolds}

We record the following two results  about the homotopy groups of
the automorphism group of an aspherical  closed   manifold.

\begin{theorem}[Homotopy Groups of $\Top(M)$ rationally for closed aspherical $M$]%
\label{the:auto_top}
Let $M$ be an  aspherical  closed
topological manifold with fundamental group $\pi$.  Suppose
that $M$ is smoothable and $\pi$ satisfies  Conjectures~\ref{con:FJC_torsionfree_regular_higher_K-theory}
  and~\ref{con:FJC_torsionfree_L-theory} for $R = \IZ$.

Then for $1 \leq i \leq ( \dim M -7 ) / 3$
one has
\[
\pi_i ( \Top( M ) ) \otimes_{\IZ} \IQ 
              = \left\{ \begin{array}{lll} \zentrum ( \pi ) \otimes_{\IZ} \IQ  & \quad & \mbox{ if } i=1 ;\\
                                                  0  & \quad & \mbox{ if } i > 1.
                         \end{array}    
                \right.
\]
\end{theorem}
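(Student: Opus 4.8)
The plan is to reduce the computation of $\pi_i(\Top(M))\otimes_\IZ\IQ$ to the computation of the rational homotopy of the block automorphism space $\widetilde{\Top}(M)$ together with a surgery-theoretic analysis of the difference between block and honest automorphisms, and then to feed in the Farrell--Jones input to kill all the relevant obstruction groups. First I would recall the fibration sequence, valid in a range, expressing the automorphism space in terms of the block automorphism space and a term built from Waldhausen's $A$-theory (concordance/pseudoisotopy theory): concretely, there is a map $\Top(M)\to\widetilde{\Top}(M)$ whose homotopy fiber is, in the concordance stable range $i\lesssim(\dim M-7)/3$, an infinite loop space built from the stable pseudoisotopy spectrum $\mathcal{P}^{\mathrm{top}}(M)$, and rationally $\mathcal{P}^{\mathrm{top}}(M)$ is governed by the homotopy groups of $\mathbf{Wh}^{\mathrm{top}}(M)$. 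The key point is that the assumption that $\pi$ satisfies the $K$-theoretic Farrell--Jones Conjecture (Conjecture~\ref{con:FJC_torsionfree_regular_higher_K-theory}) forces the relevant Whitehead-type spectra of $\IZ\pi$ to have the homotopy type predicted by the assembly map; in particular, since $M$ is aspherical with torsionfree $\pi$, all Nil-terms and lower $K$-groups vanish, $\Wh(\pi)=\widetilde K_0(\IZ\pi)=K_{-n}(\IZ\pi)=0$, and rationally $K_i(\IZ\pi)\otimes\IQ$ is computed by the assembly map, i.e.\ by $\bigoplus_p H_{i-p}(B\pi;\IQ)\otimes(K_p(\IZ)\otimes\IQ)$.

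Next I would handle the block automorphism space via the surgery-theoretic identification: there is a fibration, up to homotopy, relating $\widetilde{\Top}(M)/\Top(M)\to$ something, but more usefully the homotopy groups $\pi_i(\widetilde{\Top}(M))$ for $i\ge 1$ sit in an exact sequence involving the structure space $\mathcal{S}^s(M\times D^\bullet,\partial)$ and hence, via the surgery exact sequence for the aspherical manifold $M$, involve $H_{*}(M;\bfL(\IZ))$ and $L^s_*(\IZ\pi)$. Here the $L$-theoretic Farrell--Jones Conjecture (Conjecture~\ref{con:FJC_torsionfree_L-theory}), together with Proposition~\ref{FJC_for_torsionfree_G_and_decoration_s}, makes the assembly map $H_k(B\pi;\bfL(\IZ))\to L^s_k(\IZ\pi)$ an isomorphism, so the structure space is rationally contractible in the relevant range, and consequently the block automorphism space is rationally equivalent to the space of self-homotopy-equivalences of $M$ in that range. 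Since $M$ is aspherical, the space of homotopy self-equivalences $\operatorname{hAut}(M)$ has $\pi_0$ equal to $\operatorname{Out}(\pi)$ and, for its identity component, $\pi_i(\operatorname{hAut}(M),\id)\cong\pi_i(\operatorname{map}(M,M),\id)$ which by obstruction theory (the target $B\pi=M$ being aspherical) is $\zentrum(\pi)$ for $i=1$ and $0$ for $i\ge 2$. That is exactly the claimed answer, so the whole point is to show the two fiber terms — the pseudoisotopy piece and the structure-space piece — contribute nothing rationally in the range $1\le i\le(\dim M-7)/3$.

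The main obstacle, and where the Farrell--Jones input does its real work, is precisely the vanishing of the rationalized homotopy groups of the stable pseudoisotopy (equivalently Whitehead) spectrum in the stated range. This is the content of the theorem of Farrell--Jones / Bökstedt--Hsiang--Madsen style computations refined by the Farrell--Jones Conjecture: rationally $\pi_i(\mathbf{Wh}^{\mathrm{top}}(M))\otimes\IQ$ is a sum of terms $H_{i-?}(B\pi;\IQ)\otimes(K_?(\IZ)\otimes\IQ)$ shifted appropriately, and one has to check that after accounting for the shift in the concordance-stable range these do not interfere with low $i$; the torsionfreeness and asphericity, plus the explicit low-degree rational $K$-theory of $\IZ$, are what make this bookkeeping come out to zero for $i\le(\dim M-7)/3$ — except for the single surviving contribution in degree $1$ which, after the dust settles, is the $\zentrum(\pi)\otimes\IQ$ term. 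I would organize the final argument as: (1) set up the two fibrations; (2) quote the Farrell--Jones-based vanishing of the pseudoisotopy term rationally; (3) quote the Farrell--Jones-based contractibility of the structure space rationally; (4) compute $\pi_*(\operatorname{hAut}(M))\otimes\IQ$ by elementary obstruction theory for aspherical targets; (5) assemble. A careful statement of the concordance stable range (Igusa's bound, giving the $(\dim M-7)/3$) is needed to make steps (1)–(2) precise; I would cite this rather than reprove it. For a fuller account I would point the reader to~\cite[Chapter~19]{Lueck-Macko(2024)} and the original sources of Farrell--Jones on the topological structure of $\Top(M)$.
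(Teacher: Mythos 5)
The paper gives no argument here at all---it simply cites Farrell--Hsiang, Farrell--Jones's ICM report, Farrell's Trieste lectures, and the Weiss--Williams survey---so there is no paper proof to compare against line by line. Your overall plan is exactly the one used in those sources: (a) use Igusa/Weiss--Williams to split $\Top(M)$, in the concordance-stable range $i\le(\dim M-7)/3$, into the block-automorphism space $\widetilde{\Top}(M)$ and a pseudoisotopy/Whitehead term; (b) kill the block structure space rationally via the $L$-theoretic assembly isomorphism (Proposition~\ref{FJC_for_torsionfree_G_and_decoration_s}), reducing $\widetilde{\Top}(M)$ to $\operatorname{hAut}(M)$; (c) compute $\pi_*(\operatorname{hAut}(M))$ for aspherical $M$ by elementary obstruction theory, getting $\Out(\pi)$, $\zentrum(\pi)$, and then zero. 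Steps (b), (c), and the identification of the stable range are fine.

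The weak point is your explanation of step (a), the rational vanishing of the pseudoisotopy term. You attribute it to ``torsionfreeness and asphericity, plus the explicit low-degree rational $K$-theory of $\IZ$'' making ``the bookkeeping come out to zero.'' That cannot be the mechanism: by Borel, $K_{4j+1}(\IZ)\otimes\IQ=\IQ$ for all $j\ge 1$, and these classes are precisely what produces the nonvanishing $\bigoplus_{j\ge1}H_{(i+1)-4j}(M;\IQ)$ in the \emph{smooth} analogue, Theorem~\ref{the:auto_diff}, when $\dim M$ is odd. So ``the rational $K$-theory of $\IZ$ is small'' is false, and bookkeeping with it will not give $0$ in the topological case unless you explain what is different about $\Top$. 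The actual reason is categorical, not numerical: the topological Whitehead spectrum of $M=B\pi$ is (up to a shift) the cofiber of the \emph{linear} $K$-theory assembly map $B\pi_+\wedge\bfK(\IZ)\to\bfK(\IZ\pi)$, and Conjecture~\ref{con:FJC_torsionfree_regular_higher_K-theory} says precisely that this assembly map is an equivalence; hence the cofiber is contractible and the pseudoisotopy term vanishes in the stable range. By contrast, the smooth Whitehead spectrum is the cofiber of the $A$-theory assembly $\Sigma^\infty_+B\pi\to A(B\pi)$, and the failure of the linearization $A(B\pi)\to\bfK(\IZ\pi)$ to be a rational equivalence (rationally it has cofiber governed by $K_*(\IZ)\otimes\IQ$, i.e.\ by $\bfK(\IZ)\otimes\IQ$ in degrees $4j+1$) is exactly what survives in Theorem~\ref{the:auto_diff}. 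You should also say a word about the $\IZ/2$-homotopy orbits appearing in the Weiss--Williams fibration (visible in Theorem~\ref{the:Homotopy_groups_of_Top(M))_for_closed_aspherical_M_with_hyperbolic_fundamental_group}); rationally they are harmless once the underlying spectrum is rationally trivial, but they do enter the argument. With step (a) repaired in this way, your outline matches the references the paper cites and would give a correct proof.
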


In the differentiable case one additionally needs to study involutions on the higher $K$-theory groups.

\begin{theorem}[Homotopy Groups of $\Diff(M)$ rationally for closed aspherical $M$]%
\label{the:auto_diff}
Let $M$ be an aspherical orientable closed smooth manifold  with fundamental group $\pi$.

Then we have for $1 \leq i \leq (\dim M -7)/3 $
\[
\pi_i ( \Diff ( M ) ) \otimes_{\IZ} \IQ =
                \left\{ \begin{array}{lll} \zentrum ( \pi ) \otimes_{\IZ} \IQ  &  & \mbox{ if } i=1 ;\\
     \bigoplus_{j=1}^{\infty} H_{(i +1) - 4j} ( M ; \IQ ) &  & \mbox{ if } i > 1 \mbox{ and } \dim M \mbox{ odd};\\
                                                      0 & & \mbox{ if } i > 1 \mbox{ and } \dim M \mbox{ even}.
                         \end{array}    
                \right.
\]
\end{theorem}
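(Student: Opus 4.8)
The plan is to run the same ``three-layer'' analysis that underlies Theorem~\ref{the:auto_top}, but to carry along the extra algebraic-$K$-theoretic input flagged in the sentence preceding the statement. Throughout I work under the (implicit) hypothesis that $\pi$ satisfies the $K$-theoretic and $L$-theoretic Farrell--Jones Conjectures appearing in Theorem~\ref{the:auto_top}, together with their pseudoisotopy (i.e.\ Waldhausen $A$-theory) analogue; all three hold, e.g., when $\pi$ is a Farrell--Jones group. The three layers are: (1)~the homotopy self-equivalences $\mathrm{hAut}(M)$; (2)~block diffeomorphisms $\widetilde{\Diff}(M)$ compared with $\mathrm{hAut}(M)$, controlled by surgery theory and $L$-theory; (3)~genuine diffeomorphisms $\Diff(M)$ compared with $\widetilde{\Diff}(M)$, controlled by Waldhausen's $A$-theory and Igusa's concordance stability theorem, which is exactly where the range $i\le(\dim M-7)/3$ and the parity of $\dim M$ enter. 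The degree-$1$ value $\zentrum(\pi)\otimes_\IZ\IQ$ comes from layer~(1); layer~(2) contributes nothing rationally in the stated range; layer~(3) produces the sum $\bigoplus_{j\ge1}H_{(i+1)-4j}(M;\IQ)$ and is responsible for the even/odd dichotomy.

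\emph{Layer (1).} Since $M$ is aspherical, $\mathrm{hAut}(M)\to\mathrm{hAut}(B\pi)$ is a weak equivalence, and the path component of $\mathrm{id}$ in $\mathrm{hAut}(B\pi)$ is a $K(\zentrum(\pi),1)$ by the discussion of mapping spaces between aspherical complexes recalled before Example~\ref{exa:Examples_of_aspherical_manifolds}. Hence $\pi_1(\mathrm{hAut}(M))\otimes_\IZ\IQ=\zentrum(\pi)\otimes_\IZ\IQ$ and $\pi_i(\mathrm{hAut}(M))\otimes_\IZ\IQ=0$ for $i\ge2$. \emph{Layer (2).} The homotopy fiber of $\widetilde{\Diff}(M)\to\mathrm{hAut}(M)$ over $\mathrm{id}$ is a version of the smooth block structure space of $M$, and its homotopy groups sit in the algebraic surgery exact sequence assembled from the normal invariants $H_*(M;\bfL(\IZ))$ and the surgery obstruction groups $L^s_*(\IZ\pi)$. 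Because $\pi$ satisfies the $K$- and $L$-theoretic Farrell--Jones Conjectures, the assembly maps $H_*(B\pi;\bfL(\IZ))\to L^s_*(\IZ\pi)$ are isomorphisms (cf.\ Proposition~\ref{FJC_for_torsionfree_G_and_decoration_s}), so the homotopy groups of this structure space vanish rationally in the range in question; equivalently $\widetilde{\Diff}(M)\to\mathrm{hAut}(M)$ is a rational equivalence through that range. Combining with layer~(1), $\pi_i(\widetilde{\Diff}(M))\otimes_\IZ\IQ$ equals $\zentrum(\pi)\otimes_\IZ\IQ$ for $i=1$ and vanishes for $1<i\le(\dim M-7)/3$.

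\emph{Layer (3).} By Igusa's stability theorem, $\Diff(M)$ and its block version differ, in the concordance-stable range $i\le(\dim M-7)/3$, only through the stable smooth pseudoisotopy space $\mathcal{P}^{\mathrm{diff}}(M)$; more precisely, the Weiss--Williams description identifies the homotopy fiber of $\Diff(M)\to\widetilde{\Diff}(M)$, in that range and after a standard degree shift, with $\Omega^\infty$ of the $\IZ/2$-homotopy fixed-point spectrum of the smooth Whitehead spectrum $\mathbf{Wh}^{\mathrm{Diff}}(M)$, the $\IZ/2$-action being twisted by $(-1)^{\dim M}$. Waldhausen's theorem gives $\Omega^2\mathbf{Wh}^{\mathrm{Diff}}(M)\simeq\mathcal{P}^{\mathrm{diff}}(M)$, and the $A$-theoretic Farrell--Jones Conjecture for the aspherical manifold $M$ identifies $\mathcal{P}^{\mathrm{diff}}(M)$ rationally with the corresponding assembly, i.e.\ with $H_*(M;\mathcal{P}^{\mathrm{diff}}(\mathrm{pt}))$. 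By Farrell--Hsiang, $\pi_j(\mathcal{P}^{\mathrm{diff}}(\mathrm{pt}))\otimes_\IZ\IQ$ is $\IQ$ for $j\equiv3\pmod4$ with $j>0$, and $0$ otherwise. Feeding this into the rational homotopy fixed-point spectral sequence: for $\dim M$ even the twisted $\IZ/2$-action kills everything rationally, while for $\dim M$ odd one is left with $\bigoplus_{j\ge1}H_{(i+1)-4j}(M;\IQ)$. The long exact homotopy sequence of $\Diff(M)\to\widetilde{\Diff}(M)\to(\text{structure space})$ now combines layers~(2) and~(3): in degree $1$ the pseudoisotopy term does not interfere and one reads off $\zentrum(\pi)\otimes_\IZ\IQ$, and in each degree $i$ with $1<i\le(\dim M-7)/3$ one obtains exactly the asserted value.

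\emph{Main obstacle.} The genuinely technical step is layer~(3): marshalling the Weiss--Williams machinery together with Waldhausen's $A$-theory, getting the concordance-stability range sharp enough that $(\dim M-7)/3$ (and not a weaker bound) appears, and---most delicately---controlling the twisted $\IZ/2$-action so that the parity of $\dim M$ produces precisely the even/odd dichotomy; here one must check that the involution on $\mathbf{Wh}^{\mathrm{Diff}}(M)$ is compatible under assembly with the involution on $\mathcal{P}^{\mathrm{diff}}(\mathrm{pt})$ whose rational homotopy fixed points account for the $4j$-periodicity. Everything else is bookkeeping with long exact sequences and Atiyah--Hirzebruch/homotopy fixed-point spectral sequences, plus the three Farrell--Jones inputs ($K$-, $L$-, and $A$-theoretic), which are the substantive external ingredients.
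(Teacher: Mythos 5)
Your proposal is correct and follows exactly the standard three-layer decomposition (Gottlieb's theorem for $\mathrm{hAut}(M)$, surgery/$L$-theory for the block-diffeomorphism layer, and the Weiss--Williams/Igusa/Waldhausen machinery together with $A$-theoretic Farrell--Jones for the concordance layer), which is precisely the approach taken in the references the paper cites for this theorem---\cite{Farrell-Hsiang(1978)}, \cite{Farrell-Jones(1990b)}, and \cite{Farrell(2002)}---the paper itself giving no independent proof. One small observation: you correctly supply the Farrell--Jones hypotheses on $\pi$ as ``implicit''; the paper's theorem statement omits them, but they are clearly intended since the result sits in a section on applications of those conjectures and the preceding topological analogue (Theorem~\ref{the:auto_top}) states them explicitly.
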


For a proof see for
instance~\cite{Farrell-Hsiang(1978)},~\cite[Section~2]{Farrell-Jones(1990b)},
and~\cite[Lecture~5]{Farrell(2002)}.  For a  survey on automorphisms of manifolds we refer to~\cite{Weiss-Williams(2001)}.

\begin{remark}[Homotopy Groups of $G(M)$ for closed aspherical $M$]%
\label{rem:Homotopy_Groups_of_G(M)_or_closed_aspherical_M}
  Let $M$ be  an aspherical  closed topological manifold  with fundamental group $\pi$.
  Let $G(M)$ be the monoid of self homotopy equivalences
of $M$. Choose $\id_M$ as its base point.  Then there are  isomorphisms
\begin{eqnarray*}
\alpha_0 \colon \pi_0(G(M))  & \xrightarrow{\cong} & \Out(\pi);
  \\
\alpha_1 \colon \pi_1(G(M))   & \xrightarrow{\cong} & \zentrum(\pi);
  \\
  \pi_n(G(M)) & \cong & \{0\} \quad \text{for}\; n \ge 2,
\end{eqnarray*}
where $\alpha_0$ comes from taking the map induced on the fundamental group
and $\alpha_1$ comes from the evaluation map $G(M) \to M, f \mapsto f(x)$ for a base point $x \in M$,
see~\cite[Theorem~III.2]{Gottlieb(1965)}. Define maps $\beta_n$ for $n = 0,1$ to be the composites
\begin{eqnarray}
 & \beta_0 \colon \pi_0(\Top(M)) \to \pi_0(G(M)) \xrightarrow{\alpha_0} \Out(\pi); &
  \label{beta_0}
  \\
  &\beta_1 \colon \pi_1(\Top(M)) \to \pi_1(G(M)) \xrightarrow{\alpha} \zentrum(\pi). &
  \label{beta_1}
\end{eqnarray}
The maps $\beta_0$ and $\beta_1$ are rationally bijective
if the assumptions appearing in Theorem~\ref{the:auto_top} are satisfied.
\end{remark}

\begin{remark}\label{rem:cohomology_of_BTOP(M)}
  Let $M$ be  an aspherical  closed topological manifold
  such that the  assumptions appearing in Theorem~\ref{the:auto_top} are satisfied.
  Then  get also some information about the (co)homology of $\BTop(M)^{\circ}$, where 
$\Top(M)^{\circ}$ denotes the component of the identity of $\Top(M)$.
We get from $\beta_1$ defined in~\eqref{beta_1} and Theorem~\ref{the:auto_top} 
a map 
\[
\BTop( M )^{\circ}  \to K(\zentrum(\pi),2)
\] 
of simply connected spaces inducing isomorphisms on the rationalized homotopy groups
in dimensions $\le ( \dim M -7 ) / 3 +1$
This implies that we get  isomorphisms
\begin{eqnarray*}
  H_n(\BTop(M)^{\circ};\IQ)& \xrightarrow{\cong} &H_n(K(\zentrum(\pi),2);\IQ) ;
   \\                                                
H^n(K(\zentrum(\pi),2);\IQ) & \xrightarrow{\cong} &H^n(\BTop(M)^{\circ};\IQ),
\end{eqnarray*}
for  $n \le ( \dim M -7 ) / 3 +1$.
\end{remark}

\begin{remark}\label{rem:finitely_generated_center_and_TOP(M)_M_closd_aspherical}
  Let $M$ be  an aspherical   closed topological manifold with fundamental group $\pi$
  such that the  assumptions appearing in Theorem~\ref{the:auto_top} are satisfied.

  Then the following assertions are equivalent by
  Theorem~\ref{the:auto_top} and Remark~\ref{rem:cohomology_of_BTOP(M)}:

  \begin{enumerate}
  \item The abelian group  $\zentrum(\pi)$ of $\pi$ is finitely generated;

  \item The $\IQ$-module $\IQ \otimes_{\IZ} \zentrum(\pi)$ is finitely generated;

  \item The $\IQ$-module  $H^2(\BTop(M)^{\circ};\IQ)$ is finitely generated;

  \item The $\IQ$-module $\IQ \otimes_{\IZ} \pi_1 (\Top( M ))$ is finitely generated;

  \item The $\IQ$-module $\IQ \otimes_{\IZ} \pi_1 ( \Top( M ))$ is finitely generated 
    and the $\IQ$-module $\IQ \otimes_{\IZ} \pi_i  ( \Top( M )) \otimes_{\IZ} \IQ$ is trivial
    for  $2 \leq i \leq (\dim M -7)/3 + 1$;

  \item The $\IQ$-module  $H^i(\BTop(M)^{\circ};\IQ)$ is finitely generated for $1 \leq i \leq (\dim M -7)/3 +1$;

  \end{enumerate}

  Recall the open question whether $\zentrum(\pi_1(M))$ is finitely generated for a closed
  aspherical manifold $M$. 

  In this context we mention the result of
  Budney-Gabai~\cite[Theorem~1.3]{Budney-Gabai(2023)} that, for $n \ge 4$ and an
  $n$-dimensional hyperbolic closed manifold, both $\pi_{n-4}(\Top(M))$ and
  $\pi_{n-4}(\Diff(M)^{\circ})$ are not finitely generated. Note  that $\pi_1(M)$ satisfies the
  Farrell-Jones Conjecture by
  Theorem~\ref{the:status_of_the_Full_Farrell-Jones_Conjecture}~%
\ref{the:status_of_the_Full_Farrell-Jones_Conjecture:Classes_of_groups:hyperbolic_groups},
  $\zentrum(\pi_1(M))$ is trivial, and $\pi_i (\Top( M )) \otimes_{\IZ} \IQ$ and
  $H^i(\BTop(M)^{\circ};\IQ)$ vanish for $1 \leq i \leq (\dim M -7)/3 +1$.
\end{remark}

Integral computations of the homotopy groups of automorphisms are much harder.
We mention at least the following result taken
from~\cite[Theorem~1.3]{Enkelmann-Lueck-Malte-Ullmann-Winges(2018)}.

\begin{theorem}[Homotopy groups of $\Top(M)$ for closed aspherical $M$ with hyperbolic fundamental group]%
  \label{the:Homotopy_groups_of_Top(M))_for_closed_aspherical_M_with_hyperbolic_fundamental_group}
  
Let $M$ be a smoothable aspherical closed topological manifold
whose fundamental group $\pi$ is hyperbolic.

Then there is a $\IZ/2$-action on $\bfWh^{\TOP}(B\pi)$ such that we obtain for every $n$ satisfying
$1 \le n \leq \min\{( \dim M -7 ) / 2, (\dim M - 4)/3\}$ isomorphisms
\[
 \pi_n(\Top(M)) \cong \pi_{n+2}\Big( E\IZ/2_+ \wedge_{\IZ/2} \big( \bigvee_C \bfWh^{\TOP}(BC) \big) \Big)
\]
and an exact sequence
\[
  1 \to \pi_{2}\Big( E\IZ/2_+ \wedge_{\IZ/2} \big( \bigvee_{(C)} \bfWh^{\TOP}(BC) \big) \Big)
  \to\pi_0(\Top(M)) \to \Out(\pi) \to 1
\]
where $(C)$ ranges over the conjugacy classes $(C)$ of maximal infinite cyclic subgroups $C$ of $\pi$.
\end{theorem}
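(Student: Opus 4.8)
The strategy combines three inputs, all available because a torsionfree hyperbolic group satisfies the $K$- and $L$-theoretic Farrell--Jones Conjecture: surgery theory to control block homeomorphisms, Waldhausen's parametrized $h$-cobordism theory together with Weiss--Williams theory to pass from block homeomorphisms to honest ones, and the $K$-theoretic conjecture to evaluate the resulting topological Whitehead spectrum of $B\pi$. I would begin with the block part. Since $M$ is aspherical with $\pi_1(M)\cong\pi$, the surgery exact sequence and Proposition~\ref{FJC_for_torsionfree_G_and_decoration_s} (applied as in the sketch of Theorem~\ref{the:FJC_implies_Borel}, but parametrized) show that the block structure space $\widetilde{\cals}(M)$ is weakly contractible. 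Hence the forgetful map from the space $\widetilde{\Top}(M)$ of block homeomorphisms to the space $G(M)$ of self-homotopy equivalences is a weak equivalence, and by the computation of $\pi_*(G(M))$ recalled in Remark~\ref{rem:Homotopy_Groups_of_G(M)_or_closed_aspherical_M}, together with the vanishing of the centre $\zentrum(\pi)$ of a torsionfree hyperbolic group that is not infinite cyclic, the space $\widetilde{\Top}(M)$ is homotopy discrete with $\pi_0\cong\Out(\pi)$ in the relevant range.

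Next I would compare $\Top(M)$ with $\widetilde{\Top}(M)$. There is a fibration sequence $\Top(M)\to\widetilde{\Top}(M)\to\widetilde{\Top}(M)/\Top(M)$, and in Igusa's concordance stable range the quotient $\widetilde{\Top}(M)/\Top(M)$ is, by the stable parametrized $h$-cobordism theorem and the Weiss--Williams description of the difference between homeomorphisms and block homeomorphisms, the infinite loop space of a spectrum built from the topological Whitehead spectrum $\bfWh^{\TOP}(M)\simeq\bfWh^{\TOP}(B\pi)$ equipped with its canonical duality involution, namely a shift of $E\IZ/2_+\wedge_{\IZ/2}\bfWh^{\TOP}(B\pi)$. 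Feeding the first step ($\pi_i\widetilde{\Top}(M)=0$ for $i\ge1$ in the range) into the long exact sequence, and accounting for the two degree shifts coming from the fibration and from the relation between concordance spaces and $h$-cobordism spaces, produces the isomorphism $\pi_n(\Top(M))\cong\pi_{n+2}\bigl(E\IZ/2_+\wedge_{\IZ/2}\bfWh^{\TOP}(B\pi)\bigr)$ for $1\le n$ in the stated range, and the low-degree tail of the same sequence, combined with $\pi_0(G(M))\cong\Out(\pi)$, yields the displayed extension for $\pi_0(\Top(M))$.

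It remains to compute $\bfWh^{\TOP}(B\pi)$. Via Waldhausen's identification of $\bfWh^{\TOP}$ in terms of $A$-theory and the $K$-theoretic Farrell--Jones Conjecture for $\pi$, the spectrum $\bfWh^{\TOP}(B\pi)$ is identified with the assembly for the family $\calvcyc$ of virtually cyclic subgroups of $\pi$. Because $\pi$ is torsionfree hyperbolic, every infinite virtually cyclic subgroup is infinite cyclic and the normalizer of a maximal infinite cyclic subgroup $C$ equals $C$; hence the L\"uck--Weiermann passage from the family $\calfin$ of finite subgroups (which for torsionfree $\pi$ is the trivial family) to $\calvcyc$ collapses the relative term into a wedge $\bigvee_{(C)}\bfWh^{\TOP}(BC)$ indexed by the conjugacy classes of maximal infinite cyclic subgroups $C\le\pi$. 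Carrying the duality involution through this decomposition — it fixes each conjugacy class $(C)$ but need not fix the subgroup $C$, which is exactly why the $\pi_0$-statement is phrased with conjugacy classes $(C)$ while the $\pi_n$-formula is phrased with the subgroups $C$ — and then forming the $\IZ/2$-homotopy orbit spectrum $E\IZ/2_+\wedge_{\IZ/2}(-)$ gives the two asserted formulas.

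The main obstacle is the bookkeeping in the last two paragraphs: one must match, on the nose, the $\IZ/2$-equivariant output of Weiss--Williams theory with the L\"uck--Weiermann decomposition of $\bfWh^{\TOP}(B\pi)$ over (conjugacy classes of) maximal infinite cyclic subgroups — in particular tracking the duality involution through the identification of $\bfWh^{\TOP}(B\pi)$ with the virtually-cyclic assembly and verifying its compatibility with the wedge decomposition and with the contribution of the trivial family — and one must simultaneously pin down the precise range in which the surgery-theoretic, the Weiss--Williams, and the $A$-theoretic identifications all hold; the range $1\le n\le\min\{(\dim M-7)/2,(\dim M-4)/3\}$ is where these constraints overlap.
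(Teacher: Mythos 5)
The paper offers no proof of this theorem: it is quoted verbatim from~\cite[Theorem~1.3]{Enkelmann-Lueck-Malte-Ullmann-Winges(2018)}, so there is nothing in the survey itself to compare your argument against. Judged on its own terms, your sketch reconstructs the correct strategy of that reference. You are right that the $L$-theoretic Farrell--Jones Conjecture plus surgery makes the block structure space contractible, so the space of block homeomorphisms is weakly equivalent to $G(M)$, which (for a non-cyclic torsionfree hyperbolic $\pi$, where $\zentrum(\pi)=\{1\}$) is homotopy discrete with $\pi_0\cong\Out(\pi)$; you are right that Weiss--Williams theory together with Igusa's concordance stability range expresses the comparison between homeomorphisms and block homeomorphisms through the $\IZ/2$-homotopy orbits of the (shifted) topological Whitehead spectrum; and you are right that a L\"uck--Weiermann type splitting yields the wedge over conjugacy classes of maximal infinite cyclic subgroups, using that in a torsionfree hyperbolic group such a subgroup is its own normalizer.

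Two points deserve correction. First, and substantively: the decomposition of $\bfWh^{\TOP}(B\pi)$ does \emph{not} follow from the $K$-theoretic Farrell--Jones Conjecture for group rings (Conjecture~\ref{con:K-theoretic_Farrell-Jones_Conjecture_with_coefficients_in_additive_G-categories}), which is what the phrase ``the $K$-theoretic Farrell--Jones Conjecture for $\pi$'' most naturally suggests. What is needed is the Farrell--Jones Conjecture for Waldhausen's $A$-theory --- in the language of Subsection~\ref{subsec:The_K-theoretic_version_of_the_Farrell-Jones_Conjecture_with_coeffciients_in_higher_categories}, a higher-categorical coefficient version --- and establishing precisely this for hyperbolic groups is the main theorem of~\cite{Enkelmann-Lueck-Malte-Ullmann-Winges(2018)}; your sketch should single it out as the key new input rather than folding it into the ring-coefficient statement. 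Second, a misreading of the statement: the notation $\bigvee_C$ versus $\bigvee_{(C)}$ in the two displays is not a deliberate distinction --- the closing sentence ``where $(C)$ ranges over the conjugacy classes $(C)$ of maximal infinite cyclic subgroups $C$ of $\pi$'' governs both. The duality involution preserves each summand $\bfWh^{\TOP}(BC)$ (since $(C)=(C^{-1})$), merely acting nontrivially inside it via $c\mapsto c^{-1}$, so there is no ``class-indexed versus subgroup-indexed'' subtlety to reconcile.
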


It is conceivable that one can replace the range $(\dim M - 7)/3$ appearing in
Theorem~\ref{the:auto_top}, Theorem~\ref{the:auto_diff}, and
Remark~\ref{rem:finitely_generated_center_and_TOP(M)_M_closd_aspherical} with the range
$\min\{( \dim M -7 ) / 2, (\dim M - 4)/3\}$ appearing in
Theorem~\ref{the:Homotopy_groups_of_Top(M))_for_closed_aspherical_M_with_hyperbolic_fundamental_group}.

%-----------------------------------------------------------------------------

\subsection{Poincar\'e duality groups}%
\label{subsec:Poincare_duality_groups}

Next we deal with question which groups occur as fundamental groups of aspherical closed topological
manifolds.

The next  definition is due to Johnson-Wall~\cite{Johnson+Wall(1972)}.
\begin{definition}[Poincar\'e duality group]%
\label{def:Poincare-duality_group}

A group $G$ is called a \emph{Poincar\'e duality group of dimension $n$}
if the following conditions hold:
\begin{enumerate}
\item The group $G$ is of type $\FP$,
i.e., the trivial $\IZ G$-module $\IZ$ 
possesses a finite dimensional
projective $\IZ G$-resolution by finitely generated projective $\IZ G$-modules;

\item We get an isomorphism of abelian groups
\[
H^i(G;\IZ G) \cong 
 \left\{
 \begin{array}{ll}
 \{0\} & \text{for}\;  i \not= n;
 \\
 \IZ & \text{for}\;  i = n.
\end{array} \right.
\]
\end{enumerate}
\end{definition}

A metric space $X$ is called an \emph{absolute neighborhood retract (= $\ANR$)}
if, for every embedding $i \colon X \to Y$ as a closed subspace into a
metric space $Y$, there is an open neighbourhood $U$ of
$\im(i)$ together with a retraction $r \colon U \to \im(i)$, or,
equivalently, for every metric  space $Z$, every closed subset $Y \subseteq Z$, and 
every (continuous) map $f \colon Y \to X$, there exists 
an open neighborhood $U$ of $Y$ in $Z$ together with an 
extension $F \colon U \to X$ of $f$ to $U$.
Every finite $CW$-complex and every  topological manifold is an $\ANR$.

A \emph{finite $n$-dimensional Poincar\'e complex} is a connected finite $CW$-complex $X$
of dimension $n$ and fundamental group $\pi = \pi_1(X)$ together with a  so-called \emph{orientation
homomorphism} $w \colon \pi \to \{\pm 1\}$ and a class
$[X] \in H_n(\IZ^w \otimes_{\IZ \pi} C_*(\widetilde{X}))$ for $\IZ^w$ the $\IZ \pi$-module
whose underlying abelian group is $\IZ$ and on which $x \in \pi$ acts by multiplication
with $w(x)$ such that the up to $\IZ \pi$-chain homotopy unique $\IZ \pi$-chain map
$C^{n-*}(\widetilde{X}) \to C_*(\widetilde{X})$ given by the cap-product with a cycle representing $[X]$
is a $\IZ \pi$-chain homotopy equivalence.
Here the dual chain complex $C^{n-*}(\widetilde{X})$ is to be understood with respect to
the $w$-twisted involution $\IZ \pi \to \IZ \pi$ sending $\sum_{x \in \pi} a_x \cdot x$ to
$\sum_{x \in \pi} a_x \cdot w(x) \cdot x^{-1}$.  Note that this implies that
$H_n(\IZ^w \otimes_{\IZ \pi} C_*(\widetilde{X}))$ is infinite cyclic and $[X]$ is a generator.
This definition is due to Wall~\cite{Wall(1967)}.

A \emph{compact $n$-dimensional  homology $\ANR$-manifold $X$} is
a compact absolute neighborhood retract
such that it has a countable basis for its topology, has finite topological dimension,
and for every $x \in X$ the abelian group $H_i(X,X\setminus \{x\})$ is trivial
for $i \not= n$ and infinite cyclic for $i = n$. A closed $n$-dimensional
topological manifold is an example of a compact
$n$-dimensional homology $\ANR$-manifold,
see~\cite[Corollary 1A in V.26 page~191]{Daverman(1986)}.

\begin{theorem}[Homology $\ANR$-manifolds and finite Poincar\'e complexes]%
\label{the:manifolds_and_Poincare}
Let $M$ be a closed topological manifold, or more generally, a compact
homology $\ANR$-manifold of dimension $n$. Then $M$ is homotopy
equivalent to a finite $n$-dimensional Poincar\'e complex.
\end{theorem}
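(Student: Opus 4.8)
The plan is to split the assertion into two essentially independent parts: first, that $M$ has the homotopy type of a \emph{finite} $CW$-complex, and second, that $M$ carries a Poincar\'e-duality structure witnessed by an orientation homomorphism, a fundamental class, and a $\IZ\pi_1(M)$-chain homotopy equivalence of the chain complex of the universal covering with its dual. No input from the Farrell-Jones Conjecture is needed; everything is classical.

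For the finiteness part, recall that a compact $\ANR$ is dominated by a finite $CW$-complex (Borsuk), hence finitely dominated, and therefore carries a Wall finiteness obstruction in $\widetilde{K}_0(\IZ\pi_1(M))$. By West's solution of Borsuk's conjecture, every compact $\ANR$ is in fact homotopy equivalent to a finite $CW$-complex, so that obstruction vanishes; we obtain a homotopy equivalence $h\colon Y \xrightarrow{\simeq} M$ with $Y$ finite. (For a closed topological manifold one could instead use a handle decomposition, available by Kirby--Siebenmann in dimensions $\ge 5$ and by low-dimensional smoothing theory otherwise, but West handles the homology $\ANR$-manifold case in one stroke.) Once Poincar\'e duality is known, the homological dimension of $M$ is $n$, and Wall's results on finite $CW$-models of Poincar\'e complexes let us replace $Y$ by one of dimension exactly $n$ when $n \ge 3$; for $n \le 2$ a compact homology $\ANR$-manifold is a point, a circle, or a closed surface, and the statement is immediate.

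For the duality part, the hypothesis that $H_i(M, M \setminus \{x\})$ vanishes for $i \ne n$ and is infinite cyclic for $i = n$ makes the groups $H_n(M, M \setminus \{x\})$ the stalks of a locally constant sheaf with stalk $\IZ$ on the connected, locally contractible space $M$, hence of an orientation homomorphism $w\colon \pi_1(M) \to \{\pm 1\}$. Compactness, finite topological dimension, and the $\ANR$ property (which matches \v{C}ech, sheaf, and singular (co)homology) give that $H_n(\IZ^w \otimes_{\IZ\pi_1(M)} C_*(\widetilde M))$ is infinite cyclic; choose a generator $[M]$. It remains to show that cap product with a cycle representing $[M]$ yields a $\IZ\pi_1(M)$-chain homotopy equivalence $C^{n-*}(\widetilde M) \to C_*(\widetilde M)$, where $C^{n-*}(\widetilde M)$ is formed using the $w$-twisted involution. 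For a closed topological manifold this is classical Poincar\'e--Lefschetz duality (using a $CW$- or handle structure on $\widetilde M$). For a general compact homology $\ANR$-manifold one invokes the sheaf-theoretic Poincar\'e duality of Borel--Moore and Bredon for generalized manifolds of finite cohomological dimension, applied to the non-compact homology $\ANR$-manifold $\widetilde M$ and reinterpreted at the singular chain level via the $\ANR$ property; equivariance of the constructions under the deck transformation action upgrades this to the stated $\IZ\pi_1(M)$-equivariant form. Pulling back along $h$ transports $w$, the class $[Y] := (h_*)^{-1}[M]$, and the duality equivalence to $Y$, exhibiting $Y$ as a finite $n$-dimensional Poincar\'e complex.

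The main obstacle is this last equivariant, chain-level form of Poincar\'e duality in the homology $\ANR$-manifold case: one must have at hand (or develop) duality for generalized manifolds, where finite topological dimension is genuinely used, carefully reconcile the Borel--Moore/\v{C}ech formulation with singular chains, and push everything through the non-compact universal covering. By comparison, the finiteness step and the low-dimensional and dimension-reduction bookkeeping are routine, and the phenomenon that a homology $\ANR$-manifold need not be resolvable (Quinn's obstruction can be non-trivial) plays no role in the homotopy-theoretic conclusion.
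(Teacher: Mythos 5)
Your proof is correct and follows the same two-step route as the paper's own (very terse) argument: first cite West and Kirby--Siebenmann for the finiteness of the homotopy type, then observe that Poincar\'e duality with local coefficients carries over from closed topological manifolds to compact homology $\ANR$-manifolds. You supply considerably more detail than the paper (the orientation homomorphism from the local-homology local system, the sheaf-theoretic/Borel--Moore formulation for generalized manifolds, the Wall dimension-reduction step needed because the definition requires a $CW$-complex of dimension exactly $n$), but the approach is essentially identical.
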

\begin{proof}
A closed topological manifold, and more generally a compact $\ANR$, has the homotopy 
type of a finite $CW$-complex, see~\cite[Theorem~2.2]{Kirby-Siebenmann(1977)},~\cite{West(1977)}.
The usual proof of Poincar\'e duality for closed manifolds carries over to homology $\ANR$-manifolds.
\end{proof}

\begin{theorem}[Poincar\'e duality groups]%
\label{the:Poincare_duality_groups_versus_Poincare_complexes}
Let $G$ be a group and $n \ge 1$ be an integer. Then:

\begin{enumerate}

\item\label{the:Poincare_duality_groups_versus_Poincare_complexes:G_to_BG}
The following assertions are equivalent:
\begin{enumerate}

\item\label{the:Poincare_duality_groups_versus_Poincare_complexes:G_to_BG:(1)}
$G$ is finitely presented and a Poincar\'e duality group of dimension $n$;

\item\label{the:Poincare_duality_groups_versus_Poincare_complexes:G_to_BG:(2)}
There exists a finitely dominated $n$-dimensional aspherical Poincar\'e complex with $G$ as 
fundamental group;

\end{enumerate}

\item\label{the:Poincare_duality_groups_versus_Poincare_complexes:G_to_BG_K_0(ZG)}
Suppose that $\widetilde{K}_0(\IZ G) = 0$. Then the following assertions are equivalent:
\begin{enumerate}

\item\label{the:Poincare_duality_groups_versus_Poincare_complexes:G_to_BG_K_0(ZG):(1)}
$G$ is finitely presented and a Poincar\'e duality group of dimension $n$;

\item\label{the:Poincare_duality_groups_versus_Poincare_complexes:G_to_BG_K_0(ZG):(2)}
There exists a finite $n$-dimensional aspherical Poincar\'e complex with $G$ as 
fundamental group;

\end{enumerate}

\item\label{the:Poincare_duality_groups_versus_Poincare_complexes:dim_1}
A group $G$ is a Poincar\'e duality group of dimension $1$ if and only if $G \cong \IZ$;

\item\label{the:Poincare_duality_groups_versus_Poincare_complexes:dim_2}
A group $G$ is a Poincar\'e duality group of dimension $2$ if and only if $G$ is isomorphic
to the fundamental group of an aspherical closed surface;

\end{enumerate}
\end{theorem}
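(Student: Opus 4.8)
The plan is to split the statement into the two equivalences in arbitrary dimension, parts~(i) and~(ii), which amount to a dictionary between the geometry of aspherical Poincar\'e complexes and the algebra of group cohomology with group-ring coefficients, and the two low-dimensional classifications, parts~(iii) and~(iv), which rest on classical structure theorems and can be established independently of the first two parts (so that invoking them in the cases $n \le 2$ of~(i) and~(ii) is not circular).

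For part~(i) I would prove both implications directly. For (b)$\Rightarrow$(a): a finitely dominated $n$-dimensional aspherical Poincar\'e complex $X$ with $\pi_1(X) \cong G$ is a model for $BG$, so $\widetilde{X}$ is contractible and $C_*(\widetilde{X})$ is a free $\IZ G$-resolution of $\IZ$. Finite domination forces $G$ to be finitely presented (Wall) and forces $C_*(\widetilde{X})$ to be $\IZ G$-chain homotopy equivalent to a bounded complex of finitely generated projective $\IZ G$-modules, so $G$ is of type $\FP$. The defining chain-level duality, that cap product with a representative of $[X]$ is a $\IZ G$-chain homotopy equivalence $C^{n-*}(\widetilde{X}) \to C_*(\widetilde{X})$, yields after unravelling the $w$-twisted dual the isomorphisms
\[
H^i(G;\IZ G) \;\cong\; H_{n-i}\bigl(C_*(\widetilde{X})\bigr) \;=\; H_{n-i}(\widetilde{X})
\]
(here one uses $\IZ G \otimes_{\IZ} \IZ^w \cong \IZ G$), and the right-hand side is $\IZ$ for $i = n$ and $0$ otherwise since $\widetilde{X}$ is contractible; this is condition~(2) of Definition~\ref{def:Poincare-duality_group}. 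For (a)$\Rightarrow$(b): since $G$ is finitely presented and of type $\FP$, Wall's finiteness criterion shows that $BG$ is dominated by a finite complex; reading off $\cd(G) = n$ from $H^*(G;\IZ G)$, one may realize $BG$ by a finitely dominated $n$-dimensional complex $X$ when $n \ge 3$ (Eilenberg--Ganea together with the fact that domination by a finite complex is a homotopy-invariant property), while the cases $n \le 2$ are handled through parts~(iii) and~(iv). It remains to promote the algebraic duality to the chain level: by the Bieri--Eckmann theory of duality groups the dualizing module is $\IZ$ with a sign action $w \colon G \to \{\pm 1\}$, the group $H_n(\IZ^w \otimes_{\IZ G} C_*(\widetilde{X}))$ is then infinite cyclic, and cap product with a generator is a $\IZ G$-chain homotopy equivalence $C^{n-*}(\widetilde{X}) \to C_*(\widetilde{X})$, so $X$ is the required finitely dominated $n$-dimensional aspherical Poincar\'e complex.

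Part~(ii) I would deduce from part~(i) using Wall's finiteness obstruction from Subsection~\ref{subsec:Wall's_finiteness_obstruction}: a finite complex is in particular finitely dominated, which gives (b)$\Rightarrow$(a); conversely, given~(a), part~(i) produces a finitely dominated aspherical Poincar\'e complex $X \simeq BG$ whose finiteness obstruction $\widetilde{o}(X)$ lies in $\widetilde{K}_0(\IZ G) = 0$, so $X$ is homotopy equivalent to a finite complex, which one can keep $n$-dimensional. For part~(iii): $S^1$ is a $1$-dimensional aspherical closed manifold with $\pi_1 \cong \IZ$ (equivalently, compute $H^*(\IZ;\IZ[\IZ])$ directly), so $\IZ$ is a Poincar\'e duality group of dimension $1$; conversely, a Poincar\'e duality group $G$ of dimension $1$ has $\cd(G) = 1$, hence is free by the Stallings--Swan theorem, and is of type $\FP$, hence finitely generated, and the constraint $H^1(G;\IZ G) \cong \IZ$ forces the rank to be $1$, so $G \cong \IZ$. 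For part~(iv): the fundamental group of an aspherical closed surface is a Poincar\'e duality group of dimension $2$ by~(i), and the converse is the theorem of Eckmann--M\"uller and Eckmann--Linnell; the plan for that direction is to observe that such a $G$ is finitely presented with $\cd(G) = 2$, to analyse its subgroups of infinite index and use Stallings' structure theorem on ends to split $G$ over $\IZ$, and to assemble a closed surface with fundamental group $G$ by an induction on a suitable complexity, the delicate case in which no such splitting is visible being handled by Eckmann--Linnell through a careful Euler-characteristic and homological argument.

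The hardest part will be~(iv): the Eckmann--M\"uller/Eckmann--Linnell theorem is not a formal consequence of the other parts and requires substantial group-theoretic input, so in a survey it is cited rather than reproved. Within parts~(i) and~(ii) the only genuinely delicate points are the realization by an $n$-dimensional complex in the low dimensions $n = 1, 2$ and the passage from the algebraic Poincar\'e duality condition to a chain-level fundamental class; the remaining steps are bookkeeping with Wall's finiteness results, Wall's finiteness obstruction, and the Bieri--Eckmann description of duality groups.
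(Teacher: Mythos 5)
The paper is a survey and does not give a proof of this theorem; it is stated as a standing result and the reader is implicitly referred to the classical literature (Johnson--Wall~\cite{Johnson+Wall(1972)}, Wall~\cite{Wall(1965a),Wall(1966),Wall(1967)}, Stallings--Swan for dimension~$1$, and Eckmann--M\"uller--Linnell for dimension~$2$). Your outline is essentially the standard argument from those sources, and the overall structure is sound: (b)$\Rightarrow$(a) in~(i) by reading off $H^*(G;\IZ G)$ from the chain-level duality of $C_*(\widetilde{X})$, (a)$\Rightarrow$(b) by Wall's finiteness and realization theory plus Eilenberg--Ganea, deducing~(ii) from~(i) via Wall's finiteness obstruction, dimension~$1$ via Stallings--Swan and the end count forced by $H^1(G;\IZ G)\cong\IZ$, and dimension~$2$ by citing Eckmann--M\"uller--Linnell.

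Two points where your sketch is thinner than the actual argument and worth flagging. First, in (a)$\Rightarrow$(b) of~(i) the crux is not merely that the homology groups of $C^{n-*}(\widetilde X)$ and $C_*(\widetilde X)$ agree, but that the specific chain map given by cap product with the chosen cycle is a quasi-isomorphism of $\IZ G$-complexes (which then upgrades to a chain homotopy equivalence because both sides are bounded complexes of projectives). You assert this with a wave at Bieri--Eckmann; the needed input is that the dualizing module is $\IZ^w$ and that the cap product hits a generator of $H^0$, and this is exactly the content of Johnson--Wall. Second, in~(ii) the step ``which one can keep $n$-dimensional'' is a genuine theorem of Wall (valid for $n\ge 3$, with the low-dimensional cases falling back on~(iii) and~(iv) as you note) rather than a formality; it is worth naming it as such. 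Neither of these is an error --- the outline is correct and correctly attributes the hard input --- but a careful writeup would make both steps explicit rather than parenthetical.
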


Next we state two  very optimistic conjectures, see also
Remark~\ref{rem:homology_ANR-manifolds_versus_topological_manifolds}
and Question~\ref{que:Vanishing_of_the_resolution_obstruction_in_the_aspherical_case}.

\begin{conjecture}[Manifold structures on aspherical Poincar\'e complexes]%
\label{con:Manifold_structures_on_aspherical_Poincare-Complexes}%
Every finitely dominated aspherical Poincar\'e complex
  is homotopy equivalent to a closed topological manifold.
\end{conjecture}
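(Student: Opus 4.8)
The plan is to reduce the question, via Wall's finiteness obstruction and Ranicki's total surgery obstruction, to the vanishing of an $L$-theoretic assembly map, to invoke the Farrell-Jones Conjecture in order to obtain at least a compact homology $\ANR$-manifold in the correct homotopy type, and finally to promote that homology $\ANR$-manifold to a genuine closed topological manifold by Quinn's Resolution Theorem. The last step is the essential difficulty and is itself open.

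\textbf{Step 1 (torsionfreeness, finiteness, low dimensions).} Let $X$ be a finitely dominated aspherical Poincar\'e complex of formal dimension $n$ and put $G = \pi_1(X)$. Since $X \simeq BG$ is finitely dominated, $G$ is finitely presented and of type $\FP$; in particular $\cd(G) < \infty$, so $G$ is torsionfree. Hence, if $G$ satisfies Conjecture~\ref{con:K_0(ZG)_vanishes_for_torsionfree_G}, then $\widetilde{K}_0(\IZ G) = 0$, Wall's finiteness obstruction $\widetilde{o}(X) \in \widetilde{K}_0(\IZ G)$ vanishes, and we may replace $X$ by a homotopy equivalent \emph{finite} aspherical Poincar\'e complex, again denoted $X$, carrying its orientation homomorphism $w\colon G \to \{\pm 1\}$. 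For $n \le 2$ the group $G$ is trivial, infinite cyclic, or a closed surface group by Theorem~\ref{the:Poincare_duality_groups_versus_Poincare_complexes}, so $X$ is homotopy equivalent to a point, to $S^1$, or to a closed aspherical surface. From now on assume $n \ge 6$; the cases $n = 3,4,5$ fit the same scheme but need dimension‑specific extra input (for $n=3$ that every $3$-dimensional Poincar\'e duality group is a $3$-manifold group, for $n=4$ Freedman's disk embedding theory under a goodness hypothesis on $G$, and for $n = 5$ ordinary topological surgery in place of the homology-manifold machinery below).

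\textbf{Step 2 (from Poincar\'e complex to homology $\ANR$-manifold via Farrell-Jones).} Work with the $4$-periodic quadratic $L$-spectrum $\bfL(\IZ)$, so $\pi_0\bfL(\IZ) = L_0(\IZ) = \IZ$, and with Ranicki's $4$-periodic algebraic structure groups $\cals^{\mathrm{per}}_*(X)$, which sit in the algebraic surgery exact sequence
\begin{multline*}
  \cdots \to H_m(X;\bfL(\IZ)) \xrightarrow{\asmb_m} L_m^{\langle -\infty \rangle}(\IZ G, w) \to \cals^{\mathrm{per}}_m(X)
  \\
  \to H_{m-1}(X;\bfL(\IZ)) \xrightarrow{\asmb_{m-1}} L_{m-1}^{\langle -\infty \rangle}(\IZ G, w) \to \cdots,
\end{multline*}
and by the homology-manifold surgery theory of Quinn and Bryant-Ferry-Mio-Weinberger, reformulated by Ranicki, the periodic total surgery obstruction $s^{\mathrm{per}}(X) \in \cals^{\mathrm{per}}_n(X)$ vanishes if and only if $X$ is homotopy equivalent to a compact $n$-dimensional homology $\ANR$-manifold. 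Now $X \simeq BG$, so the $\asmb_m$ are the $L$-theoretic assembly maps for $BG$; if $G$ satisfies Conjectures~\ref{con:The_Farrell-Jones_Conjecture_for_middle_and_negative_K-theory} and~\ref{con:FJC_torsionfree_L-theory} for $R = \IZ$ (the twist by $w$ incorporated exactly as in Proposition~\ref{FJC_for_torsionfree_G_and_decoration_s}, using that over $\IZ$ all decorations agree for torsionfree $G$), then every $\asmb_m$ is an isomorphism, whence by exactness $\cals^{\mathrm{per}}_*(BG) = 0$ and a fortiori $s^{\mathrm{per}}(X) = 0$. Therefore $X$ is homotopy equivalent to a compact homology $\ANR$-manifold $Y$.

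\textbf{Step 3 (the main obstacle: resolving $Y$).} The compact homology $\ANR$-manifold $Y$ carries Quinn's local index $i(Y) \in 1 + 8\IZ \subseteq \IZ$, and by Quinn's Resolution Theorem, since $\dim Y = n \ge 6$, there is a cell-like map $M \to Y$ from a closed topological manifold $M$ provided $i(Y) = 1$; a cell-like map of compact $\ANR$s is a homotopy equivalence, so then $M \simeq Y \simeq X$ and the proof is finished. The difficulty is to force $i(Y) = 1$: the vanishing of $s^{\mathrm{per}}(X)$ only records the total surgery obstruction in the periodic group $\cals^{\mathrm{per}}_n(X)$, whereas obtaining an honest manifold requires lifting it to the connective structure group $\cals_n(X)$ built from the $1$-connective cover $\bfL(\IZ)\langle 1 \rangle$, the obstruction to the lift being precisely $i(Y) - 1 \in 8\IZ$. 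Whether this lift always exists for aspherical $X$ — equivalently, whether every compact aspherical homology $\ANR$-manifold has trivial resolution obstruction — is exactly the open Question~\ref{que:Vanishing_of_the_resolution_obstruction_in_the_aspherical_case} and the content of Remark~\ref{rem:homology_ANR-manifolds_versus_topological_manifolds}. This is where the argument currently stops, and it is the hard part of the conjecture.
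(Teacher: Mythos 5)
The statement you are asked to prove is labelled as a \emph{conjecture} in the paper, and the paper offers no proof; it is explicitly introduced as one of two ``very optimistic conjectures.'' You cannot therefore be compared against a proof in the paper. What you have written instead is, in effect, a reconstruction of the paper's own subsequent discussion, namely Theorem~\ref{the:Borel_FJC_Poincare_complexes}, Remark~\ref{rem:homology_ANR-manifolds_versus_topological_manifolds}, and Question~\ref{que:Vanishing_of_the_resolution_obstruction_in_the_aspherical_case}: assuming the Farrell--Jones Conjecture one reaches a compact homology $\ANR$-manifold in the right homotopy type, and the obstacle to upgrading this to a closed topological manifold is precisely Quinn's resolution obstruction, which is the open content of that Question. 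That part of your analysis is accurate and is the same route the paper describes, and you are honest that the argument stops there rather than claiming a proof.

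There is, however, a genuine omission. Your Step~2 asserts that the vanishing of the $4$-periodic total surgery obstruction $s^{\mathrm{per}}(X)$ is \emph{equivalent} to $X$ being homotopy equivalent to a compact homology $\ANR$-manifold, and you derive the vanishing from the Farrell--Jones isomorphisms alone. But as the paper explains in Remark~\ref{rem:Erratum}, the Bryant--Ferry--Mio--Weinberger theory on which this equivalence rests has a gap, pointed out by Hebestreit--Land--Weiss--Winges: the claim that every compact homology $\ANR$-manifold has a $\TOP$-reduction of its Spivak normal fibration is not established. For this reason the paper's Theorem~\ref{the:Borel_FJC_Poincare_complexes} carries the additional hypothesis that the Spivak normal fibration of $X$ admits a $\TOP$-reduction, and that hypothesis is essential for the step you are invoking: without a $\TOP$-reduction there is no normal map with target $X$ to feed into homology-manifold surgery, and the purely algebraic vanishing of $s^{\mathrm{per}}(X)$ in Ranicki's structure group does not by itself produce a homology $\ANR$-manifold. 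So even conditional on Farrell--Jones, your chain of implications has two unresolved inputs, not one: the $\TOP$-reducibility of the Spivak normal fibration of $X$ (Step~2), in addition to the resolution obstruction (Step~3). You should add the Spivak $\TOP$-reduction as an explicit hypothesis in Step~2, exactly as the paper does, and flag it alongside Question~\ref{que:Vanishing_of_the_resolution_obstruction_in_the_aspherical_case} as a second place where the argument is incomplete.
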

  
\begin{conjecture}[Poincar\'e duality groups]\label{con:Poincare-duality-groups}
A finitely presented group is an $n$-dimensional Poincar\'e duality group if and only if
it is the fundamental group of an  aspherical closed  $n$-dimensional topological manifold.
\end{conjecture}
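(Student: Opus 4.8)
The plan is to split the biconditional into its two implications, of which the forward one (an aspherical closed manifold group is a Poincar\'e duality group) is essentially classical, while the converse carries all the difficulty and must be reduced to the conjectures already introduced in this survey. For the forward implication, suppose $G \cong \pi_1(M)$ for an aspherical closed topological $n$-manifold $M$. Then $M$ is a model for $BG$ and, being a closed topological manifold, has the homotopy type of a finite $CW$-complex (Kirby-Siebenmann, as used in the proof of Theorem~\ref{the:manifolds_and_Poincare}), so $G$ is finitely presented and of type $\FP$. The universal cover $\widetilde{M}$ is a cocompact model for $EG$, hence $H^i(G;\IZ G) \cong H^i_c(\widetilde{M};\IZ)$, and Poincar\'e duality with compact supports for the manifold $\widetilde{M}$ identifies this with $H_{n-i}(\widetilde{M};\IZ^w)$; since $\widetilde{M}$ is contractible, this group is $\IZ$ for $i = n$ and $0$ otherwise, so $G$ is a Poincar\'e duality group of dimension $n$.

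For the converse, suppose $G$ is a finitely presented Poincar\'e duality group of dimension $n$. By Theorem~\ref{the:Poincare_duality_groups_versus_Poincare_complexes}~\ref{the:Poincare_duality_groups_versus_Poincare_complexes:G_to_BG} there is a finitely dominated $n$-dimensional aspherical Poincar\'e complex $X$ with $\pi_1(X) \cong G$. By Conjecture~\ref{con:Manifold_structures_on_aspherical_Poincare-Complexes}, $X$ is homotopy equivalent to a closed topological manifold $M$; since $M \simeq X$ and $X$ is aspherical, $M$ is an aspherical closed topological $n$-manifold with $\pi_1(M) \cong G$, which is exactly the assertion. (If in addition $\widetilde{K}_0(\IZ G)$ vanishes --- as predicted by Conjecture~\ref{con:K_0(ZG)_vanishes_for_torsionfree_G}, since a Poincar\'e duality group is torsionfree --- then Theorem~\ref{the:Poincare_duality_groups_versus_Poincare_complexes}~\ref{the:Poincare_duality_groups_versus_Poincare_complexes:G_to_BG_K_0(ZG)} even lets one take $X$ \emph{finite}, though this is not logically needed.) So the whole matter reduces to Conjecture~\ref{con:Manifold_structures_on_aspherical_Poincare-Complexes}.

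I would attack that conjecture by surgery theory. For $n \ge 5$ the tool is Ranicki's total surgery obstruction: an $n$-dimensional Poincar\'e complex $X$ is homotopy equivalent to a closed topological manifold if and only if an obstruction in the (periodic) Ranicki structure group $\cals_n(X)$ vanishes, and for $X = BG$ the algebraic surgery exact sequence expresses $\cals_n(BG)$ through the assembly map $H_*(BG;\bfL(\IZ)) \to L_*^{\langle -\infty\rangle}(\IZ G)$. The Farrell-Jones Conjecture for $L$-theory (Conjecture~\ref{con:FJC_torsionfree_L-theory}) makes this assembly an isomorphism in the degrees that matter, which kills the obstruction. The small dimensions are handled separately: $n \le 2$ is classical, $n = 3$ follows from Thurston geometrization, and $n = 4$ requires Freedman theory together with the hypothesis that $G$ is good in the sense of Freedman.

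The hard part is twofold. First, the Farrell-Jones Conjecture --- hence the assembly isomorphism above, and also the vanishing of $\widetilde{K}_0(\IZ G)$ --- is open for general finitely presented groups, so the argument is conditional in full generality. More seriously, even granting Farrell-Jones, the surgery argument only produces a compact homology $\ANR$-manifold homotopy equivalent to $BG$, not a genuine topological manifold, because of Quinn's resolution obstruction; by Bryant-Ferry-Mio-Weinberger such non-resolvable homology $\ANR$-manifolds really exist, and whether the resolution obstruction must vanish on an aspherical Poincar\'e complex is exactly the open Question~\ref{que:Vanishing_of_the_resolution_obstruction_in_the_aspherical_case} (see also Remark~\ref{rem:homology_ANR-manifolds_versus_topological_manifolds}). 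So the proposal honestly reduces the statement to the Full Farrell-Jones Conjecture together with the vanishing of the resolution obstruction in the aspherical case, and it is precisely about this last point that essentially nothing is known.
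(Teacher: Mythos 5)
This is stated as a \emph{conjecture} in the paper, not a theorem, and the paper gives no proof of it; so there is no paper proof to compare against. What you have actually done is give a correct account of why the statement is expected to be true and exactly which open problems it reduces to, and this matches the paper's own discussion surrounding the conjecture rather closely: your forward implication is the content of Theorem~\ref{the:manifolds_and_Poincare} together with the standard duality argument, your converse correctly passes through Theorem~\ref{the:Poincare_duality_groups_versus_Poincare_complexes} and Conjecture~\ref{con:Manifold_structures_on_aspherical_Poincare-Complexes}, and your identification of the two remaining obstacles (the Farrell--Jones Conjecture and the resolution obstruction in Question~\ref{que:Vanishing_of_the_resolution_obstruction_in_the_aspherical_case}) is exactly the paper's own analysis in Theorem~\ref{the:Borel_FJC_Poincare_complexes} and Remark~\ref{rem:homology_ANR-manifolds_versus_topological_manifolds}.

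There is one gap in your account of the surgery step. You assert that granting Farrell--Jones the surgery machinery produces a compact homology $\ANR$-manifold homotopy equivalent to $BG$, leaving only the resolution obstruction. In fact even that conclusion is not automatic: as explained in Remark~\ref{rem:Erratum}, the erratum to Bryant--Ferry--Mio--Weinberger shows that one must additionally assume that the Spivak normal fibration of the finite Poincar\'e complex $X$ admits a $\TOP$-reduction --- the claim that every compact homology $\ANR$-manifold has such a reduction is the part of the original argument that went wrong. This is why Theorem~\ref{the:Borel_FJC_Poincare_complexes} carries that extra hypothesis. So your reduction should list three conditional inputs rather than two: the Farrell--Jones Conjecture, the existence of a $\TOP$-reduction of the Spivak normal fibration, and the vanishing of the resolution obstruction in the aspherical case.
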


The \emph{disjoint disk property} says that for any $\epsilon > 0$ and
maps $f,g \colon D^2\to M$ there are maps $f',g' \colon D^2 \to M$ so
that the distance between $f$ and $f'$ and the distance between $g$
and $g'$ are bounded by $\epsilon$ and $f'(D^2) \cap g'(D^2) =
\emptyset$.

\begin{theorem}[Poincar\'e duality groups and aspherical compact homology \linebreak
  $\ANR$-manifolds]%
  \label{the:Borel_FJC_Poincare_complexes}%
  Suppose that  $G$ is a finitely presented Poincar\'e duality group of dimension
  $n \ge 6 $ and satisfies
  Conjectures~\ref{con:The_Farrell-Jones_Conjecture_for_middle_and_negative_K-theory}
  and~\ref{con:FJC_torsionfree_L-theory} for $R = \IZ$.  Let $X$ be some aspherical finite
  Poincar\'e complex with $\pi_1(X) \cong G$. (It exists because of
  Theorem~\ref{the:Poincare_duality_groups_versus_Poincare_complexes}~%
\ref{the:Poincare_duality_groups_versus_Poincare_complexes:G_to_BG_K_0(ZG)}.)  Suppose
  that the Spivak normal fibration of $X$ admits a $\TOP$-reduction.

  Then $BG$ is homotopy equivalent to an aspherical
  compact homology  $\ANR$-manifold
satisfying the disjoint disk property.
\end{theorem}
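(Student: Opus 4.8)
The plan is to run the surgery theory for compact homology $\ANR$-manifolds developed by Bryant--Ferry--Mio--Weinberger, and to use the Farrell--Jones Conjecture to force the relevant assembly map to be an isomorphism, so that the obstruction to realising $X$ by a homology $\ANR$-manifold is killed. First I would collect the consequences of the hypotheses. Since $G$ satisfies Conjecture~\ref{con:The_Farrell-Jones_Conjecture_for_middle_and_negative_K-theory}, we have $\widetilde{K}_0(\IZ G) = 0$, so the finite aspherical Poincar\'e complex $X$ with $\pi_1(X) \cong G$ furnished by Theorem~\ref{the:Poincare_duality_groups_versus_Poincare_complexes}~\ref{the:Poincare_duality_groups_versus_Poincare_complexes:G_to_BG_K_0(ZG)} may be taken to be $BG$. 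Feeding the same $K$-theoretic vanishing into the Rothenberg sequences~\eqref{Rothenberg_sequence_for_Zpi_j_plus_1_to_j} (as in Proposition~\ref{pro:no_decorations_in_the_torsionfree_case} and its analogues for the decorations $h$ and $p$) shows that the decorations $s$, $h$, $p$, $\langle -\infty\rangle$ on the quadratic $L$-groups of $\IZ G$ all agree, so it will be harmless to be vague about which decoration a surgery obstruction lives in. Finally, because the Spivak normal fibration of $X$ admits a $\TOP$-reduction and $n \ge 6$, the usual surgery machinery produces a degree-one normal map $(f,b)\colon M \to X$ with $M$ a closed topological manifold; write $\sigma(f,b) \in L_n^{\langle -\infty\rangle}(\IZ G)$ for its surgery obstruction.

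The next step is to observe that the assembly map is an isomorphism. Over $\IZ$ the $L$-spectrum is decoration-independent, so $\bfL^{\langle -\infty\rangle}(\IZ) = \bfL(\IZ)$ is the full ($4$-periodic, non-connective) surgery $L$-spectrum, and Conjecture~\ref{con:FJC_torsionfree_L-theory} for $R = \IZ$ (see also Proposition~\ref{FJC_for_torsionfree_G_and_decoration_s}) gives that the assembly map
\[
A\colon H_k(BG;\bfL(\IZ)) \xrightarrow{\ \cong\ } L_k^{\langle -\infty\rangle}(\IZ G)
\]
is an isomorphism for every $k \in \IZ$; in particular it is surjective in degree $k = n$.

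Now I would invoke the Bryant--Ferry--Mio--Weinberger theory. In that theory the set of normal invariants of $X$ relative to compact homology $\ANR$-manifolds with the disjoint disk property is the \emph{full} periodic $L$-homology group $H_n(X;\bfL(\IZ))$ --- rather than the connective version $H_n(X;\bfL(\IZ)\langle 1 \rangle)$ occurring in topological manifold surgery, the difference being accounted for by Quinn's resolution obstruction --- and the controlled surgery obstruction of such a normal invariant is its image under $A$. Since $A$ is surjective in degree $n$, there is a class in $H_n(X;\bfL(\IZ))$ mapping to $-\sigma(f,b)$; altering the normal invariant of $(f,b)$ by it yields a degree-one normal map $(f',b')\colon M' \to X$ out of a compact homology $\ANR$-manifold $M'$ with the disjoint disk property whose surgery obstruction in $L_n^{\langle -\infty\rangle}(\IZ G)$ vanishes. (Equivalently, Ranicki's total surgery obstruction of $X$ lies in a structure group $\mathbb{S}^{\ANR}_n(X)$ which, by the Bryant--Ferry--Mio--Weinberger surgery exact sequence and the fact that $A$ is an isomorphism in \emph{every} degree, is zero.) Since $n \ge 6$, the controlled surgery theorem of Bryant--Ferry--Mio--Weinberger (building on Quinn's controlled surgery) then lets us do surgery on $(f',b')$ to make $f'$ a homotopy equivalence without leaving the class of compact homology $\ANR$-manifolds with the disjoint disk property. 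The outcome is a homotopy equivalence $M' \xrightarrow{\simeq} X = BG$ with $M'$ a compact homology $\ANR$-manifold satisfying the disjoint disk property; as asphericity is a homotopy-invariant property, $M'$ is aspherical, which is the desired conclusion.

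The main obstacle here is entirely contained in the input from Bryant--Ferry--Mio--Weinberger: the identification of the homology-$\ANR$-manifold normal invariants of $X$ with the full periodic $L$-homology $H_n(X;\bfL(\IZ))$, and the fact that a vanishing controlled surgery obstruction in $L_n^{\langle -\infty\rangle}(\IZ G)$ is realised by completing surgery to a homotopy equivalence within the category of homology $\ANR$-manifolds with the disjoint disk property. These are the deep results of that paper and of Quinn's controlled-surgery work; in the present argument the Farrell--Jones Conjecture enters only --- but crucially --- to make the assembly map $A$ surjective (in fact bijective), which is precisely what allows the surgery obstruction $\sigma(f,b)$ to be absorbed.
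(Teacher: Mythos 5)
Your argument is correct and is exactly the approach the paper takes: the paper's ``proof'' consists of citations to Ranicki's total surgery obstruction and the Bryant--Ferry--Mio--Weinberger homology-manifold surgery theory, and your write-up fills in how the Farrell--Jones hypothesis makes the periodic assembly map bijective, which kills the four-periodic total surgery obstruction (equivalently, makes the BFMW structure group vanish), so that the degree-one normal map furnished by the assumed $\TOP$-reduction can be surgered to a homotopy equivalence from a compact homology $\ANR$-manifold with the disjoint disk property.
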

\begin{proof}
See~\cite[Remark~25.13 on page~297]{Ranicki(1992)},~%
\cite[Main Theorem on page~439 and Section~8]{Bryant-Ferry-Mio-Weinberger(1996)}
and~\cite[Theorem A and Theorem B]{Bryant-Ferry-Mio-Weinberger(2007)}.
\end{proof}

\begin{remark}\label{rem:Erratum} Note that in
  Theorem~\ref{the:Borel_FJC_Poincare_complexes} the condition appears that for some
  aspherical finite Poincar\'e complex $X$ with $\pi_1(X) \cong G$ the Spivak normal
  fibration of $X$ admits a $\TOP$-reduction.  This condition does not appear in earlier
  versions. The reason is that there seems to be a mistake
  in~\cite{Bryant-Ferry-Mio-Weinberger(1996)} as explained in the
  Erratum~\cite{Bryant-Ferry-Mio-Weinberger-Erratum(2024)}. The problem was pointed out by
  Hebestreit-Land-Weiss-Winges, see~\cite{Hebestreit-Land-Weiss-Winges(2024)}. The problem
  is that the proof that any compact homology $\ANR$-manifold has a $\TOP$-reduction of
  its Spivak normal fibration is not correct. For applications
  of~\cite{Bryant-Ferry-Mio-Weinberger(1996)} one has either to assume that the
  $\TOP$-reduction exists or to prove its existence.  This is the reason why this extra
  assumption in Theorem~\ref{the:Borel_FJC_Poincare_complexes} appears.

  As pointed out in~\cite{Bryant-Ferry-Mio-Weinberger-Erratum(2024)},
  Theorems~\ref{the:Hyperbolic_groups_with_spheres_a_boundary}
  and~\ref{the:stable_Cannon_Conjecture} remain true with adding any further
  hypothesis. This is also true for
  Theorem~\ref{the:Product_decompositions_of_aspherical_closed_manifolds}
  as explained in~\cite[Remark~9.185]{Lueck(2022book)}. 
\end{remark}

\begin{remark}[Compact homology $\ANR$-manifolds versus closed topological manifolds]%
\label{rem:homology_ANR-manifolds_versus_topological_manifolds}
 In the following all manifolds have dimension $\ge 6$.
 One would prefer that in the conclusion of
 Theorem~\ref{the:Borel_FJC_Poincare_complexes} one could replace ``compact
 homology $\ANR$-manifold'' by ``closed topological manifold''.  The problem is
 that in the geometric Surgery Exact Sequence  one has to work with the
 $1$-connective cover $\bfL\langle 1 \rangle$ of the $L$-theory
 spectrum $\bfL = \bfL(\IZ) = \bfL(\IZ)^{\langle -\infty \rangle}$,
 whereas in the assembly map appearing in the Farrell-Jones setting one
 uses the $L$-theory spectrum $\bfL$. The $L$-theory spectrum $\bfL$
 is $4$-periodic, i.e., $\pi_n(\bfL) \cong \pi_{n+4}(\bfL)$ for $n \in
 \IZ$. The $1$-connective cover $\bfL\langle 1 \rangle$ comes with a
 map of spectra $\bff\colon \bfL\langle 1 \rangle \to \bfL$ such
 that $\pi_n(\bff)$ is an isomorphism for $n \ge 1$ and
 $\pi_n(\bfL\langle 1 \rangle) = 0$ for $n \le 0$. Since $\pi_0(\bfL)
 \cong \IZ$, one misses a part involving $L_0(\IZ)$ 
 of the so-called \emph{total surgery obstruction} due to Ranicki, i.e., the
 obstruction for a finite Poincar\'e
 complex to be homotopy equivalent to a closed topological manifold. If
 one deals with the periodic $L$-theory spectrum $\bfL$, one picks up only
 the obstruction for a finite Poincar\'e complex to be homotopy
 equivalent to a compact homology $\ANR$-manifold, the so-called \emph{four-periodic
 total surgery obstruction}. The difference of these two obstructions
 is related to the \emph{resolution obstruction} of Quinn, which takes values in $L_0(\IZ)$. 
 Any element of $L_0(\IZ)$ can be realized by an appropriate
 compact homology $\ANR$-manifold as its \emph{resolution obstruction}. There are 
 compact homology $\ANR$-manifolds, that are not homotopy equivalent to
 closed manifolds. But no example of an aspherical compact homology $\ANR$-manifold
 that is not homotopy equivalent to a closed topological manifold is known.
 For  an aspherical compact homology $\ANR$-manifold $M$, the total surgery obstruction
 and the resolution obstruction carry the same information. So we could replace
 in the conclusion of
 Theorem~\ref{the:Borel_FJC_Poincare_complexes}  ``compact
 homology $\ANR$-manifold'' by ``closed topological manifold'' if and only if every aspherical
 compact homology $\ANR$-manifold with the disjoint disk property admits a resolution.

 We refer for instance 
 to~\cite{Bryant-Ferry-Mio-Weinberger(1996),Ferry-Pedersen(1995a),Quinn(1983a),Quinn(1987_resolution),Ranicki(1992)} 
for more information about this topic.
\end{remark}

\begin{question}[Vanishing of the resolution obstruction in the aspherical case]%
\label{que:Vanishing_of_the_resolution_obstruction_in_the_aspherical_case}
Is every aspherical compact homology $\ANR$-manifold homotopy equivalent
to a closed topological manifold?
\end{question}

%%%%%%%%%%%%%%%%%%%%%%%%%%%%%%%%%%%%%%%%%%%%%%%%%%%%%%%%%%%%%%%%%%%%%
%%%%%%%%%%%%%%%%%%%% Section 8: The Full Farrell-Jones Conjecture %%%%%%%%%%%%%%%%%%
%%%%%%%%%%%%%%%%%%%%%%%%%%%%%%%%%%%%%%%%%%%%%%%%%%%%%%%%%%%%%%%%%%%%%

\typeout{---------- Section 8:  The Full Farrell-Jones Conjecture    ---------------}

\section{The Full Farrell-Jones Conjecture}\label{sec:The_Full_Farrell-Jones_Conjecture}

In this section we formulate the most general version of the Farrell-Jones Conjecture, the
so called \emph{Full Farrell-Jones Conjecture}. It is not of importance to understand the rather
sophisticated formulation of it which we will briefly sketch next but to know that it
implies all the other variants of the Farrell-Jones Conjecture and hence all the
applications explained above and later. So the reader may pass directly to the
Section~\ref{subsec:Farrell-Jones-groups} where we give information about  the class $\calfj$
of Farrell-Jones groups, i.e., groups, which do satisfy the Full Farrell-Jones
Conjecture. It will turn out that $\calfj$ is rather large.

%-----------------------------------------------------------------------------

\subsection{Classifying spaces for families of subgroups}\label{subsec:Classifying_spaces_for_families_of_subgroups}

A \emph{family of subgroups of a group $G$} is a  set of subgroups of $G$ that is
closed under conjugation with elements of $G$ and under passing to subgroups.
The most relevant examples for us is  the family $\calvcyc $ of virtually cyclic subgroups,
where virtually cyclic means  that the subgroup is either  finite or contains a normal infinite cyclic subgroup of finite index.
For the notion of a $G$-$CW$-complex for refer for instance to~\cite[Chapters~1 and~2]{Lueck(1989)}.

\begin{definition}[Classifying $G$-$CW$-complex for a family of subgroups]%
\label{def:Classifying_G-CW-for_a_family_of_subgroups}
Let $\calf$ be a family of subgroups of $G$. A model
$\EGF{G}{\calf}$ for the \emph{classifying $G$-$CW$-complex for the family $\calf$ of subgroups of $G$}
or sometimes also called \emph{classifying space for the family $\calf$ of subgroups of $G$}, 
is a $G$-$CW$-complex $\EGF{G}{\calf}$ that has the following properties:
\begin{enumerate}
\item All isotropy groups of $\EGF{G}{\calf}$ belong to $\calf$;
\item For any $G$-$CW$-complex $Y$ whose isotropy groups belong to $\calf$,
there is up to $G$-homotopy precisely one $G$-map $Y \to X$. 
\end{enumerate}
\end{definition}

\begin{theorem} [Homotopy characterization of $\EGF{G}{\calf}$]%
\label{the:G-homotopy_characterization_of_EGF(G)(calf)}%
\index{Theorem!Homotopy characterization of $\EGF{G}{\calf}$}
Let $\calf$ be a family of subgroups.
\begin{enumerate}

\item\label{the:G-homotopy_characterization_of_EGF(G)(calf):existence}
There exists a model for $\EGF{G}{\calf}$ for any family $\calf$;

\item\label{the:G-homotopy_characterization_of_EGF(G)(calf):characterization}
A $G$-$CW$-complex $X$ is a model for $\EGF{G}{\calf}$ if and only if 
all its isotropy groups belong to $\calf$ and for each $H \in\calf$ the $H$-fixed point
set $X^H$ is contractible.
\end{enumerate}
\end{theorem}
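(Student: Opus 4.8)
The plan is to deduce both parts from a single equivariant obstruction-theoretic lemma, which I would isolate first:

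\textbf{Key Lemma.} \emph{If $X$ is a $G$-$CW$-complex all of whose isotropy groups lie in $\calf$ and such that $X^H$ is weakly contractible for every $H \in \calf$, then for every $G$-$CW$-complex $Y$ whose isotropy groups lie in $\calf$ there is, up to $G$-homotopy, precisely one $G$-map $Y \to X$.} I would prove this by induction over the equivariant skeleta of $Y$. To extend a $G$-map defined on $Y^{(n-1)}$ across a free orbit of an equivariant $n$-cell $G/H \times D^n$ with $H \in \calf$, one uses the adjunction $\map_G(G/H \times -, X) \cong \map(-, X^H)$: the task becomes extending a map $S^{n-1} \to X^H$ over $D^n$, which is possible because $X^H$ is path connected with $\pi_{n-1}(X^H) = 0$; for $n = 0$ one only needs $X^H \neq \emptyset$. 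Uniqueness up to $G$-homotopy is the same argument applied to the relative $G$-$CW$-complex $(Y \times [0,1], Y \times \{0,1\})$: two $G$-maps $f_0,f_1$ together with the prescribed values on $Y \times \{0,1\}$ extend over all cells since the obstructions again lie in $\pi_\ast(X^H) = 0$. Here I would also invoke the standard facts that for a $G$-$CW$-complex $X$ the fixed-point set $X^H$ inherits a $CW$-structure (so "weakly contractible" and "contractible" agree by Whitehead), and that passing to $H$-fixed points carries $G$-maps and $G$-homotopies to maps and homotopies.

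\textbf{Existence, part (i).} I would exhibit a model explicitly. One clean choice is the infinite iterated join $X := Z^{\ast\infty} = Z \ast Z \ast \cdots$ of $Z := \coprod_{H \in \calf} G/H$, equipped with its $CW$-topology so that it is a $G$-$CW$-complex. Its isotropy groups are finite intersections of conjugates of subgroups of members of $\calf$, hence lie in $\calf$. For a subgroup $K \leq G$ one has $(G/H)^K \neq \emptyset$ iff $K$ is subconjugate to $H$; since $\calf$ is closed under conjugation and subgroups, this happens for some $H \in \calf$ precisely when $K \in \calf$. Therefore $X^K = \bigl(\coprod_H (G/H)^K\bigr)^{\ast\infty}$ is empty when $K \notin \calf$ and is an infinite join of nonempty spaces, hence weakly contractible, when $K \in \calf$. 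By the Key Lemma such an $X$ satisfies the universal property of Definition~\ref{def:Classifying_G-CW-for_a_family_of_subgroups}, so it is a model for $\EGF{G}{\calf}$. (Alternatively one may build $X$ starting from $\coprod_{H \in \calf} G/H$ and inductively attaching equivariant cells $G/H \times D^{n+1}$ to kill $\pi_n$ of all fixed-point sets $(-)^H$ with $H \in \calf$, then pass to the colimit, using that taking $H$-fixed points commutes with this colimit.)

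\textbf{Characterization, part (ii).} If $X$ is a $G$-$CW$-complex with isotropy in $\calf$ and $X^H$ contractible for all $H \in \calf$, the Key Lemma gives it the universal property, so it is a model; this is the "if" direction. Conversely, let $X$ be a model. Property~(i) of the definition gives isotropy in $\calf$. Let $X_0$ be the model from part~(i), with $X_0^H$ contractible for all $H \in \calf$. Applying the universal property of $X$ to $X_0$ and that of $X_0$ to $X$ produces $G$-maps $X_0 \to X$ and $X \to X_0$; each composite is a $G$-map from $X$, resp.\ $X_0$, to itself and hence $G$-homotopic to the identity by the uniqueness clause. So $X$ and $X_0$ are $G$-homotopy equivalent, whence $X^H \simeq X_0^H$ is contractible for every $H \in \calf$ (and $X^H \neq \emptyset$ there already follows from a $G$-map $G/H \to X$). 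This is the "only if" direction, and it also yields uniqueness of models up to $G$-homotopy as a by-product.

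The only genuinely non-formal step is the inductive extension/homotopy argument inside the Key Lemma — setting up the equivariant obstruction theory via the adjunction $\map_G(G/H \times -, X) \cong \map(-, X^H)$ and handling the $0$-cells ($X^H \neq \emptyset$) as a separate base case. Once that lemma and the elementary structural facts about fixed points of $G$-$CW$-complexes are in place, parts (i) and (ii) are essentially bookkeeping.
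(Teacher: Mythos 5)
The paper itself gives no proof of this theorem: it only states it and refers the reader to \cite{Lueck(2005s)} for details. Your proof is a correct rendition of the standard argument, and it is essentially the one found in the literature (Lück's survey, and ultimately tom Dieck): a Key Lemma deducing the universal property from contractibility of fixed-point sets by inducting over equivariant skeleta via the adjunction $\map_G(G/H \times -, X) \cong \map(-, X^H)$, the infinite-join (Milnor-type) or cell-attaching construction for existence, and the formal comparison of two models to get the ``only if'' direction of the characterization. Two small points worth tightening: the phrase ``free orbit of an equivariant $n$-cell'' is a misnomer --- you mean the orbit of a cell $G/H \times D^n$, which is free only when $H = \{1\}$; and the claim that the infinite join with the weak topology is a $G$-$CW$-complex deserves a sentence (one typically replaces the join topology by the colimit topology precisely so that this holds, and checks that $(-)^K$ commutes with both the join and the colimit). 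Neither affects the substance of your argument, which is correct.
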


For more information about classifying spaces for families we refer for instance to~\cite{Lueck(2005s)}.

%-----------------------------------------------------------------------------

\subsection{$G$-homology theories}\label{subsec:G-homology_theories}

\begin{definition}[$G$-homology theory]%
\label{def:G-homology_theory}%
A \emph{$G$-homology theory} $\calh_*^G$ is a collection of
covariant functors 
$\calh^G_n$
from the category of $G$-$CW$-pairs to the category of
abelian groups indexed by $n \in \IZ$ together with natural transformations
\[
\partial_n^G(X,A)\colon  \calh_n^G(X,A) \to
\calh_{n-1}^G(A):= \calh_{n-1}^G(A,\emptyset)
\]
for $n \in \IZ$
such that the following axioms are satisfied:
\begin{itemize}

\item $G$-homotopy invariance \\[1mm]
If $f_0$ and $f_1$ are $G$-homotopic $G$-maps of $G$-$CW$-pairs 
$(X,A) \to (Y,B)$, then $\calh_n^G(f_0) = \calh^G_n(f_1)$ for $n \in \IZ$;

\item Long exact sequence of a pair\\[1mm]
Given a pair $(X,A)$ of $G$-$CW$-complexes,
there is a long exact sequence
\begin{multline*}
\ldots \xrightarrow{\calh^G_{n+1}(j)}
\calh_{n+1}^G(X,A) \xrightarrow{\partial_{n+1}^G(X,A)}
\calh_n^G(A) \xrightarrow{\calh^G_{n}(i)} \calh_n^G(X)
\\
\xrightarrow{\calh^G_{n}(j)} \calh_n^G(X,A)
\xrightarrow{\partial_n^G} \ldots
\end{multline*}
where $i\colon  A \to X$ and $j\colon X \to (X,A)$ are the inclusions;

\item Excision \\[1mm]
Let $(X,A)$ be a $G$-$CW$-pair, and let
$f\colon  A \to B$ be a cellular $G$-map of
$G$-$CW$-complexes. Equip $(X\cup_f B,B)$ with the induced structure
of a $G$-$CW$-pair. Then the canonical map
$(F,f)\colon  (X,A) \to (X\cup_f B,B)$ induces an isomorphism
\[
\calh_n^G(F,f)\colon  \calh_n^G(X,A) \xrightarrow{\cong}
\calh_n^G(X\cup_f B,B)
\]
for all  $n \in \IZ$;

\item Disjoint union axiom\\[1mm]
Let $\{X_i \mid i \in I\}$ be a collection of
$G$-$CW$-complexes. Denote by
$j_i\colon  X_i \to \coprod_{i \in I} X_i$ the canonical inclusion.
Then the map
\[
\bigoplus_{i \in I} \calh^G_{n}(j_i)\colon  \bigoplus_{i \in I} \calh_n^G(X_i)
\xrightarrow{\cong} \calh_n^G\Bigl(\coprod_{i \in I} X_i\Bigr)
\]
is bijective for all $n \in \IZ$;

\end{itemize}
\end{definition}

Let $\cala$ be an additive $G$-category, i.e., an additive category with a $G$-action by
automorphisms of additive $G$-categories. Then one can construct a specific $G$-homology
theory $\calh^G_*(-;\bfK_{\cala})$ such that for a subgroup
$H \subseteq G$ we have
\begin{eqnarray}
  H_n^G(G/H;\bfK_{\cala})& = & K_n(\cala[H])
\label{H_n_upper_G_n(G/H,bfK_cala)_is_K_n(cala(H))}
\end{eqnarray}
where $\cala[H]$ is a certain additive category, which is a kind of twisted group additive
category associated to $H$ and $\cala$ considered as additive $H$-category by restricting the $G$-action to an $H$-action.

If $\cala$ comes additionally with involution compatible with the $G$-action, then
$\cala[G]$ is an additive category with involution and  one can construct a specific $G$-homology
theory $\calh^G_*(-;\bfL_{\cala})$ digesting $G$-$CW$-complexes such that for a subgroup
$H \subseteq G$ we have
\begin{eqnarray}
  H_n^G(G/H;\bfL_{\cala}^{\langle - \infty \rangle}) & = & L_n^{\langle - \infty \rangle}(\cala[H]).
\label{H_n_upper_G_n(G/H,bfL_cala_upper_(langle_minus_infty_rangle))_is_L_n_upper_(langle_minus_infty_rangle)(cala(H))}
\end{eqnarray}

\begin{remark}\label{rem:taking_cala_to_be_calp(R)}
  If $R$ is a ring coming with a group homomorphism $\rho \colon G \to \aut(R)$ to the
  group of ring automorphisms of $R$, then we can take for $\cala$ the category $\calp(R)$
  of finitely generated projective $R$-modules (or rather a skeleton for it) and $\rho$
  induces the structure of an additive $G$-category on $\calp(R)$. Moreover, $\calp(R)[G]$
  can be identified with the category of finitely generated projective modules over
  the twisted group ring $R_{\rho}[G]$.  In particular we get
  \[
    H_n^G(G/H;\bfK_{\calp(R)}) = K_n(R_{\rho|_H}[H])
  \]
  for any subgroup $H \subseteq G$ and
  $n \in \IZ$.

  If $R$ is a ring with involution, the twisted group ring $R_{\rho}[G]$ and
  $\calp(R)[G]$ inherits involutions and we get
  \[
    H_n^G(G/H;\bfL^{\langle - \infty \rangle}_{\calp(R)}) = L_n^{\langle - \infty \rangle}(R_{\rho|_H}[H]).
  \]
\end{remark}

More details of these notions and constructions can be found for instance in~\cite[Chapter~12]{Lueck(2022book)}.

%-----------------------------------------------------------------------------

\subsection{The $K$-theoretic version of the Farrell-Jones Conjecture with coefficients in additive categories}%
\label{subsec:The_K-theoretic_version_of_the_Farrell-Jones_Conjecture_with_coeffciients_in_additive_categrories}

\begin{conjecture}[$K$-theoretic Farrell-Jones Conjecture with coefficients in additive $G$-categories]%
\label{con:K-theoretic_Farrell-Jones_Conjecture_with_coefficients_in_additive_G-categories}
We say that $G$ satisfies the \emph{$K$-theoretic Farrell-Jones Conjecture with coefficients in
additive $G$-categories} if for every  additive $G$-category $\cala$ and every $n \in \IZ$ the assembly map
induced  by the projection $\pr \colon \EGF{G}{\calvcyc} \to G/G$
\[
H_n^G(\pr;\bfK_{\cala})  \colon H^G_n(\EGF{G}{\calvcyc};\bfK_{\cala}) 
\to H_n^G(G/G;\bfK_{\cala}) =  K_n(\cala[G])
\]
is bijective.
\end{conjecture}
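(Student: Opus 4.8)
The statement in question is the $K$-theoretic Farrell--Jones Conjecture itself, which is open for general $G$; below I sketch the only presently known route to it, the \emph{controlled algebra plus transfer} strategy, and indicate where it necessarily stalls in full generality. First I would translate the assertion that $H^G_n(\pr;\bfK_{\cala})$ is bijective into an algebraic vanishing statement. Using the general machinery of equivariant homology theories (the functors $H^G_*(-;\bfK_{\cala})$ satisfy the axioms of Definition~\ref{def:G-homology_theory} and arise from an $\Or(G)$-spectrum), the assembly map is an isomorphism for all $n$ if and only if the $\bfK_{\cala}$-homology of the homotopy cofiber of $\EGF{G}{\calvcyc} \to G/G$ vanishes, which in turn is controlled by the vanishing of the $K$-theory of suitable \emph{obstruction categories} of geometric $\cala[G]$-modules parametrized by $G \times [1,\infty)$ with $\IZ$-control and a ``germ at infinity'' condition. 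Injectivity and surjectivity of the assembly map are then reduced to two such vanishing statements (``lower-dimensional control implies triviality'').

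Next come the standard reductions. Using the \emph{Transitivity Principle} for families, one reduces from $\calvcyc$ to the family $\calfin$ of finite subgroups, at the cost of a separate analysis of the relative term, which is governed by Nil- and $\UNil$-type groups of virtually cyclic subgroups and is handled by Bass--Heller--Swan/Waldhausen-type computations in the spirit of Theorem~\ref{the:Bass-Heller-Swan-decomposition_for_middle_and_lower_K-theory}; for torsionfree $G$ one reduces further to the trivial family. The real content is then: exhibit for $G$ a cocompact action on a finite-dimensional contractible space $X$ (the Rips complex, a $\mathrm{CAT}(0)$ space, a building, an associated flow space, etc.) admitting, for every $\varepsilon>0$, $G$-invariant open covers of $X\times[1,\infty)$ (or of the flow space) that are ``long in the flow direction'', ``$\varepsilon$-thin transverse to it'', of uniformly bounded covering dimension, and with all isotropy groups in $\calvcyc$. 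This is exactly the hypothesis nowadays packaged as $G$ being \emph{transfer reducible over} $\calvcyc$, or as admitting a \emph{finitely $\calvcyc$-amenable} action.

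Granted such geometric data, the core of the proof is the transfer argument. The long thin cover yields a $G$-$CW$-approximation with controlled cells, over which one builds a ``wrong-way'' map of obstruction spectra; composing it with the assembly map and then applying a \emph{squeezing} estimate plus an Eilenberg swindle shows that this composite equals, up to arbitrarily fine control, multiplication by the Euler characteristic of a contractible complex, hence the identity. Tracing this through the obstruction category forces its $K$-theory to vanish, giving simultaneously injectivity and surjectivity of the assembly map. The argument is insensitive to the coefficients: it goes through verbatim for an arbitrary additive $G$-category $\cala$ (and, with the decoration $\langle-\infty\rangle$, for the $\bfL$-theoretic version), which is precisely why this ``with coefficients'' formulation is the one enjoying the inheritance properties under subgroups, overgroups of finite index, extensions, and directed colimits.

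The main obstacle — and the reason Conjecture~\ref{con:K-theoretic_Farrell-Jones_Conjecture_with_coefficients_in_additive_G-categories} remains open for general $G$ — is the geometric input of the second paragraph: constructing the long thin $\calvcyc$-covers, equivalently the finitely $\calvcyc$-amenable cocompact action. This requires genuinely class-specific geometry (Mineyev-type flows and hyperbolicity in the hyperbolic case; geodesic-flow and convexity estimates for $\mathrm{CAT}(0)$-groups and lattices; tree actions and acylindricity for amalgams), and no construction is known that applies to an arbitrary group. Consequently the plan cannot be completed in the stated generality; what one actually obtains by carrying it out, together with the inheritance properties, are the theorems recorded later in this survey establishing the Full Farrell--Jones Conjecture for the large class $\calfj$.
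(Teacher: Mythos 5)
The statement you were asked about is a \emph{conjecture}, not a theorem: the paper states it as an open conjecture (one of the components of the Full Farrell--Jones Conjecture) and does not prove it. You correctly recognized this, and your sketch of the only known strategy (controlled algebra and geometric modules, reduction to obstruction categories, transfer reducibility via long thin $\calvcyc$-covers, squeezing plus an Eilenberg swindle) matches what the paper itself describes informally in Section~\ref{sec:Methods_an_strategies_of_proofs}, together with a correct assessment of where the argument necessarily stalls for an arbitrary group. One minor calibration against the paper's Subsection~\ref{subsec:The_role_of_the_family_calvcyc}: for $K$-theory the reduction discussed there goes from $\calvcyc$ to the subfamily $\calvcyc_I$ of type~I virtually cyclic subgroups (where the relative assembly map is known to be an isomorphism for every $G$ and every additive $G$-category), not unconditionally down to $\calfin$; passing further from $\calvcyc$ or $\calvcyc_I$ to $\calfin$ is obstructed by genuine Nil-terms in general and is only available for, e.g., regular coefficient rings or in the rationalized setting. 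You flag the relative Nil/$\UNil$-term so you are clearly aware of the issue, but the clean statement is the $\calvcyc_I$ reduction. Otherwise your account is consistent with the paper's framing.
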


%-----------------------------------------------------------------------------

\subsection{The $L$-theoretic version of the Farrell-Jones Conjecture with coefficients in additive categories}%
\label{subsec:The_L-theoretic_version_of_the_Farrell-Jones_Conjecture_with_coeffciients_in_additive_categrories}

\begin{conjecture}[$L$-theoretic Farrell-Jones Conjecture with coefficients in additive
  $G$-categories with involution]%
\label{con:L-theoretic_Farrell-Jones_Conjecture_with_coefficients_in_additive_G-categories_with_involution}
  We say that $G$ satisfies the \emph{$L$-theoretic Farrell-Jones Conjecture with coefficients
    in additive $G$-categories with involution} if for every  additive $G$-category with involution $\cala$
  and every $n \in \IZ$ the assembly map induced  by the   projection $\pr \colon \EGF{G}{\calvcyc} \to G/G$
\[
  H^G_n(\pr;\bfL_{\cala}^{\langle -\infty \rangle}) \colon H^G_n(\EGF{G}{\calvcyc};\bfL_{\cala}^{\langle -\infty \rangle})
  \to
  H_n^G(G/G;\bfL_{\cala}^{\langle -\infty \rangle}) = L_n^{\langle -\infty \rangle}(\cala[G])
\]
is bijective.
\end{conjecture}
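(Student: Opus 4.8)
The statement is a conjecture and is open for general $G$, so a ``proof'' can only mean the method by which it has been verified for the groups in the class $\calfj$ advertised in the introduction, together with the inheritance arguments that enlarge that class. The plan is therefore to explain, for a fixed group $G$ --- and in fact for $G$ together with all wreath products $G \wr F$ by finite groups $F$, so that the inheritance machinery can later be applied --- the controlled-algebra strategy of Bartels--Farrell--L\"uck--Reich, in its $L$-theoretic form.

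The heart of the argument is to verify a geometric criterion, \emph{transfer reducibility} (or one of its variants, such as strong transfer reducibility, or the finitary/$N$-dominated versions). Concretely, for every natural number $N$ one must produce a cocompact model for $\EGF{G}{\calfin}$, a suitable thickening of it, a $G$-space $Y$ carrying a \emph{flow}, a $G$-map into $Y$, and a $G$-invariant open cover of $Y$ of multiplicity $\le N$, all of whose isotropy groups lie in $\calvcyc$, and which is ``long and thin'': each member is small transverse to the flow while the flow moves points a definite amount before they leave it. Feeding such a family of covers into the controlled $L$-theory formalism --- the stable-$\infty$-category/assembly machinery of Bartels--L\"uck--Reich and its $L$-theoretic refinements --- produces simultaneously surjectivity and injectivity of the $\calvcyc$-assembly map in the conjecture. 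Along the way the ``atoms'' of $\calvcyc$ intervene, and their assembly maps are handled directly: the infinite cyclic and infinite dihedral cases are exactly the content of the Bass--Heller--Swan and Shaneson-type splittings recalled in Sections~\ref{sec:Lower_K-goups} and~\ref{sec:Algebraic_L-theory}, with the $\UNil$-contributions that are special to $L$-theory (the reason the decoration $\langle -\infty\rangle$ is used) absorbed into the controlled estimates.

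The real obstacle is the construction of the flow space and the long thin covers; this is where all the geometry of $G$ enters, and there is no uniform recipe. For a hyperbolic group one takes the geodesic flow on a thickening of the Rips complex and builds long thin transverse covers using $\delta$-hyperbolicity; for a finite-dimensional $\mathrm{CAT}(0)$ group one replaces this by the space of generalized geodesics in the $\mathrm{CAT}(0)$ model and uses convexity of the metric; for lattices in virtually connected Lie groups, $S$-arithmetic groups, mapping class groups, $\GL_n(\IZ)$, and so on, one combines such constructions with fibering/extension arguments and further geometric input (the geometry of symmetric spaces, Teichm\"uller space, outer space, $\ldots$). Each new class of groups needs its own dynamical input, which is exactly why $\calfj$ is delineated by a list of cases rather than by a single clean criterion.

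Finally, once the conjecture (with finite wreath products, and with additive-category coefficients) is known for these building blocks, one closes off $\calfj$ using its inheritance properties: closure under subgroups, under extensions $1 \to K \to G \to Q \to 1$ with $K$ and $Q$ in $\calfj$, under directed colimits with arbitrary structure maps, under passing to finite-index overgroups, and under amalgamated products and HNN extensions over subgroups lying in $\calfj$. Each closure property is proved once and for all --- by a Mayer--Vietoris argument, a fibre-sequence argument, or a continuity argument for equivariant homology theories in the controlled setting --- and iterating these over the building blocks produces the large class $\calfj$ described in the introduction. In practice one then checks that a given group lies in $\calfj$, rather than attacking the assembly map of the conjecture directly.
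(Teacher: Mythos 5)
The item you were asked to ``prove'' is, as you correctly observe, a \emph{conjecture}: the paper states Conjecture~\ref{con:L-theoretic_Farrell-Jones_Conjecture_with_coefficients_in_additive_G-categories_with_involution} as an open problem and gives no proof of it, only a description (Section~\ref{sec:Methods_an_strategies_of_proofs}) of how it has been verified for the groups in $\calfj$ and a statement of the inheritance properties of $\calfj$ (Theorem~\ref{the:status_of_the_Full_Farrell-Jones_Conjecture}). Your account of the controlled-algebra strategy --- transfer reducibility, flow spaces, long thin covers with $\calvcyc$-isotropy, transfer, and the UNil/decoration issues that force $\langle-\infty\rangle$ --- matches the paper's sketch in spirit and in terminology, and you are also right that there is no uniform dynamical input, only case-by-case constructions.

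One concrete overstatement: you list ``closure under amalgamated products and HNN extensions over subgroups lying in $\calfj$'' among the inheritance properties used to close off $\calfj$. That is not a known inheritance property and does not appear in Theorem~\ref{the:status_of_the_Full_Farrell-Jones_Conjecture}~\ref{the:status_of_the_Full_Farrell-Jones_Conjecture:inheritance}. What is known is closure under free products (amalgamation over the trivial group), under directed colimits, under subgroups, under finite-index overgroups, under graph products, and under the specific extension statements listed there; graphs of groups enter only through the special results for graphs of abelian or virtually cyclic vertex groups, not as a general inheritance theorem. If amalgamation over arbitrary Farrell--Jones subgroups were an inheritance property, the status of $\calfj$ would look very different (for instance, all one-relator groups would follow easily), so this is not just a bibliographic slip.

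Everything else you write is a fair summary of the machinery the paper points to, and your opening observation --- that no proof of this conjecture exists, only verifications for a large but proper class of groups --- is the essential point.
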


%-----------------------------------------------------------------------------

\subsection{The $K$-theoretic version of the Farrell-Jones Conjecture with coefficients in higher  categories}%
\label{subsec:The_K-theoretic_version_of_the_Farrell-Jones_Conjecture_with_coeffciients_in_higher_categories}

\begin{conjecture}[$K$-theoretic Farrell-Jones Conjecture with coefficients in higher $G$-categories]%
\label{con:K-theoretic_Farrell-Jones_Conjecture_with_coefficients_in_higher_G-categories}
We say that $G$ satisfies the \emph{$K$-theoretic Farrell-Jones Conjecture with coefficients in
  higher $G$-categories} if for every right exact $G$-$\infty$-category $\calc$ and every  $n \in \IZ$ the assembly map
 induced  by the   projection $\pr \colon \EGF{G}{\calvcyc} \to G/G$
\[
H_n^G(\pr;\bfK_{\calc})  \colon H^G_n(\EGF{G}{\calvcyc};\bfK_{\calc}) 
\to H_n^G(G/G;\bfK_{\calc}) =  K_n(\calc[G])
\]
is bijective.
\end{conjecture}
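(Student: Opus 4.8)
I should say at once that this is a \emph{conjecture}: no proof for arbitrary $G$ is known, and what follows is only the strategy by which it, and its additive-coefficient predecessor Conjecture~\ref{con:K-theoretic_Farrell-Jones_Conjecture_with_coefficients_in_additive_G-categories}, has been established for the large class $\calfj$ of Farrell--Jones groups, together with the extra input needed to upgrade additive coefficients to right exact $G$-$\infty$-categories. For a fixed group $G$ the argument splits into a \emph{formal} part (reducing the family and propagating the statement along group-theoretic operations) and a \emph{geometric} part (a controlled-algebra vanishing theorem fed by a hypothesis about an action of $G$); only the latter uses anything particular about $G$.

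\textbf{Formal part.} Since $\bfK_{\calc}$ defines a $G$-homology theory, a Transitivity Principle lets one replace the family $\calvcyc$ by the family $\calfin$ of finite subgroups, at the cost of separately controlling the relative term indexed by $\calvcyc/\calfin$; for additive coefficients this term is exactly the Nil-contribution governed by the (twisted) Bass--Heller--Swan decomposition of Theorem~\ref{the:Bass-Heller-Swan-decomposition_for_middle_and_lower_K-theory}, and for $\infty$-categorical coefficients the same reduction works once a Bass--Heller--Swan splitting is available for the localising invariant $\bfK$ of stable $\infty$-categories. It thus suffices to show that the $\calfin$-assembly map $H^G_*(\EGF{G}{\calfin};\bfK_{\calc}) \to K_*(\calc[G])$ is bijective. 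At the other end, once the conjecture is known for hyperbolic groups, finite-dimensional $\mathrm{CAT}(0)$-groups, lattices, and so on, one propagates it along subgroups, finite products, extensions with Farrell--Jones base and quotient, free and amalgamated products, and directed colimits with arbitrary structure maps; these inheritance statements use only that $\bfK_{\calc}$ is a $G$-homology theory with the expected induction behaviour, so they cost nothing extra in the higher-categorical setting.

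\textbf{Geometric part.} One describes the (co)fibre of the $\calfin$-assembly map as the $K$-theory of an \emph{obstruction category} of controlled objects over $G$, and must prove this $K$-theory vanishes. The Bartels--L\"uck--Reich method deduces the vanishing from a geometric hypothesis on $G$ --- roughly, the existence of $G$-equivariant open covers of $G \times \edub{G}$ (or of an associated flow space) of uniformly bounded dimension, with members in $\calvcyc$ and with arbitrarily fine control --- by transferring the obstruction category into a controlled category over the flow space and there performing an Eilenberg swindle. Hyperbolicity, the geodesic flow of a $\mathrm{CAT}(0)$-space, or the structure of an $S$-arithmetic group enters only in verifying this hypothesis. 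For coefficients in a right exact $G$-$\infty$-category the controlled constructions (controlled stable $\infty$-categories, their $K$-theory, the transfer, the swindle) must be rebuilt $\infty$-categorically, but these are essentially formal consequences of $\bfK$ being a finitary localising invariant, so the same geometric input yields the same conclusion.

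The main obstacle is the geometric part. For the conjecture \emph{as stated} --- for \emph{all} groups --- it is at present insurmountable: no usable geometry is available for a completely general $G$, and it is not even clear that the statement should hold for every group (see Section~\ref{sec:Can_the_Full_Farrell-Jones_Conjecture_be_true_for_all_groups?}). Within the reach of the method, the residual difficulty is the careful bookkeeping needed to carry the controlled-algebra arguments from additive to $\infty$-categorical coefficients while preserving the finiteness and continuity properties on which the swindle depends; once that infrastructure is in place, the higher-categorical statement reduces, group by group, to the classical one.
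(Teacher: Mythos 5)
You correctly recognize that this is a conjecture, not a theorem: the paper gives no proof, and indeed no proof for arbitrary $G$ is known. Your discussion of the strategy — assembly maps as forget-control maps, the controlled obstruction category, flow spaces, transfers, the Eilenberg swindle, and the reduction of the $\infty$-categorical statement to the additive one via the formal properties of $\bfK$ as a finitary localising invariant — is a reasonable account of the machinery sketched in Section~\ref{sec:Methods_an_strategies_of_proofs} and of what lies behind the status report in Theorem~\ref{the:status_of_the_Full_Farrell-Jones_Conjecture} (with the higher-categorical input supplied by the cited work of Bunke--Kasprowski--Winges). Since there is no proof in the paper to compare against, nothing further can be checked; your answer is an adequate response to an unanswerable prompt.
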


Note  that Conjecture~\ref{con:K-theoretic_Farrell-Jones_Conjecture_with_coefficients_in_additive_G-categories}
is a special case of Conjecture~\ref{con:K-theoretic_Farrell-Jones_Conjecture_with_coefficients_in_higher_G-categories}.
It is very likely that one can formulate a version of Conjecture~\ref{con:K-theoretic_Farrell-Jones_Conjecture_with_coefficients_in_higher_G-categories} for $L$-theory which encompasses Conjecture~\ref{con:L-theoretic_Farrell-Jones_Conjecture_with_coefficients_in_additive_G-categories_with_involution} and holds for all Farrell-Jones groups.

%-----------------------------------------------------------------------------

\subsection{The formulation of the Full Farrell-Jones Conjecture}%
\label{subsec:The_formulation_of_the_Full_Farrell-Jones_Conjecture}

Let $G$ and $F$ be groups. Their \emph{wreath product} $G \wr F$
is defined as the semidirect product $(\prod_F G) \rtimes F$ where $F$ acts on $\prod_F G$
by permuting the factors.

\begin{conjecture}[Full Farrell-Jones Conjecture]%
\label{con:The_Full_Farrell-Jones_Conjecture}
We say that a group $G$ satisfies the \emph{Full Farrell-Jones Conjecture}
if for any finite group $F$ the wreath product $G \wr F$ satisfies
Conjectures~\ref{con:K-theoretic_Farrell-Jones_Conjecture_with_coefficients_in_additive_G-categories},~%
\ref{con:L-theoretic_Farrell-Jones_Conjecture_with_coefficients_in_additive_G-categories_with_involution},
and~\ref{con:K-theoretic_Farrell-Jones_Conjecture_with_coefficients_in_higher_G-categories}.
\end{conjecture}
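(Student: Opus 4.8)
The statement is terminology rather than a theorem: it \emph{defines} ``$G$ satisfies the Full Farrell-Jones Conjecture'' to mean that $G\wr F$ satisfies Conjectures~\ref{con:K-theoretic_Farrell-Jones_Conjecture_with_coefficients_in_additive_G-categories},~\ref{con:L-theoretic_Farrell-Jones_Conjecture_with_coefficients_in_additive_G-categories_with_involution} and~\ref{con:K-theoretic_Farrell-Jones_Conjecture_with_coefficients_in_higher_G-categories} for every finite group $F$, so strictly speaking there is nothing to prove about the definition itself. What one actually wants is, for a concrete group, that it \emph{lies} in the class $\calfj$, and below I outline the strategy I would follow for that.

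First I would install the formal machinery that lets membership in $\calfj$ be propagated. Working with the equivariant homology theories $\calh^G_*(-;\bfK_{\cala})$ and $\calh^G_*(-;\bfL^{\langle-\infty\rangle}_{\cala})$ built from additive (or right exact $\infty$-) categories with $G$-action, together with induction along group homomorphisms, one proves that $\calfj$ is closed under passing to subgroups, to finite-index overgroups, to extensions with Farrell-Jones kernel and quotient, to directed colimits with arbitrary structure maps, and to free products. The wreath-product clause that is built into the definition is exactly what makes these arguments uniform. This step is bookkeeping with the axioms of Definition~\ref{def:G-homology_theory} and with the identifications $H_n^G(G/H;\bfK_{\cala})=K_n(\cala[H])$ and $H_n^G(G/H;\bfL_{\cala}^{\langle-\infty\rangle})=L_n^{\langle-\infty\rangle}(\cala[H])$; no geometry is involved.

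Second I would treat the ``seed'' classes --- hyperbolic groups, finite-dimensional $\mathrm{CAT}(0)$-groups, lattices, $3$-manifold groups, $S$-arithmetic groups --- for which one must show directly that the assembly map $H^G_n(\pr;\bfK_{\cala})$ associated to $\pr\colon\EGF{G}{\calvcyc}\to G/G$ is an isomorphism, and likewise for $\bfL^{\langle-\infty\rangle}$. The route is: from a geometric action of $G$ on an appropriate space (a Rips complex for a hyperbolic group, the $\mathrm{CAT}(0)$-space itself, a symmetric space times a Euclidean building for an $S$-arithmetic group) produce a flow space together with long, thin, finite-dimensional $G$-invariant open covers whose members have virtually cyclic isotropy; these supply the homotopy-coherent control needed to split the assembly map, yielding injectivity and surjectivity at once. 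When $G$ is instead controlled through a family of hyperelementary-type subgroups one uses the Farrell-Hsiang method, trading the flow-space input for a Dress-style induction. The $L$-theoretic Conjecture~\ref{con:L-theoretic_Farrell-Jones_Conjecture_with_coefficients_in_additive_G-categories_with_involution} is handled the same way, the $\langle-\infty\rangle$-decoration being chosen so that the $\UNil$-terms behave well under the Shaneson splitting; and the higher $\infty$-categorical Conjecture~\ref{con:K-theoretic_Farrell-Jones_Conjecture_with_coefficients_in_higher_G-categories} is obtained by checking that the controlled-algebra arguments are natural enough to promote from additive coefficients, so no new geometric input is needed.

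The hard part is the geometric half of the second step: constructing the flow space and the long, thin, finite-dimensional $\calvcyc$-covers with the contraction estimates that the controlled-algebra machine demands. This is where the geodesic flow on the Rips complex of a hyperbolic group, or the center-of-mass contraction in a $\mathrm{CAT}(0)$-space, actually enters, and it is the genuinely difficult geometric input; by contrast the family reduction $\calfin\to\calvcyc$, the attendant $\NK$- and $\UNil$-terms, and all the inheritance arguments are formal manipulations with equivariant homology theories. For groups where no such geometry is available the assertion remains, correctly, a conjecture.
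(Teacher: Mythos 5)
You are right that this ``conjecture'' is a definition, not a provable statement: the paper simply introduces the terminology \emph{Full Farrell-Jones Conjecture} via the wreath-product clause, and no proof appears or is expected. Your subsequent sketch of how one would actually place a group in $\calfj$ --- inheritance properties handled formally via the equivariant homology machinery, and seed classes handled via flow spaces, long thin $\calvcyc$-covers, the Farrell-Hsiang method, and transfers --- is faithful to what the paper outlines in Section~\ref{sec:Methods_an_strategies_of_proofs} and Theorem~\ref{the:status_of_the_Full_Farrell-Jones_Conjecture}, so there is nothing to correct.
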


For more information about the formulations of the
Conjectures~\ref{con:K-theoretic_Farrell-Jones_Conjecture_with_coefficients_in_additive_G-categories},~%
\ref{con:L-theoretic_Farrell-Jones_Conjecture_with_coefficients_in_additive_G-categories_with_involution},~%
\ref{con:K-theoretic_Farrell-Jones_Conjecture_with_coefficients_in_higher_G-categories},
and~\ref{con:The_Full_Farrell-Jones_Conjecture} we refer for instance to~\cite[Chapter~13]{Lueck(2022book)},
where one can find also references to the relevant literature.

%-----------------------------------------------------------------------------

\subsection{The Full Farrell-Jones Conjecture implies all other variants of the Farrell-Jones Conjecture}%
\label{subsec:The_Full_Farrell-Jones_Conjecture_implies_all_other_variants_of_the_Farrell-Jones_Conjecture}

As we have mentioned already above, the Full Farrell-Jones Conjecture~\ref{con:The_Full_Farrell-Jones_Conjecture}
implies all the variants of the Farrell-Jones Conjecture stated in this article, e.g.,
Conjectures~\ref{con:K_0(ZG)_vanishes_for_torsionfree_G},~%
\ref{con:K_0(ZG)_vanishes_for_torsionfree_G_and_regular_R},%
~\ref{con:Idempotent_Conjecture},~\ref{con:Serre},~%
\ref{con:FHC_Farrell-Jones_Conjecture_for_Wh(G)_for_torsionfree_Groups},%
~\ref{con:FJC_neg_K-Theory_and_regular_coefficient_rings_and_torsionfree_groups},%
\ref{con:FJC_torsionfree_regular_higher_K-theory}~%
\ref{con:FJC_torsionfree_L-theory},~%
\ref{con:K-theoretic_Farrell-Jones_Conjecture_with_coefficients_in_additive_G-categories},~%
\ref{con:L-theoretic_Farrell-Jones_Conjecture_with_coefficients_in_additive_G-categories_with_involution},
and~\ref{con:K-theoretic_Farrell-Jones_Conjecture_with_coefficients_in_higher_G-categories}.
This holds also for other version scattered in the literature, for instance for fibered versions and so on.
Proofs or references to the proofs of these implications can be found in~\cite[Section~13.11]{Lueck(2022book)}.

%-----------------------------------------------------------------------------

\subsection{Farrell-Jones groups}\label{subsec:Farrell-Jones-groups}

\begin{definition}[Farrell-Jones group]\label{def:Farrell-Jones-group}
  We call a group $G$ a \emph{Farrell-Jones group} if it satisfies the
  Full Farrell-Jones Conjecture~\ref{con:The_Full_Farrell-Jones_Conjecture}.
We denote by $\calfj$ the class of Farrell-Jones groups.
\end{definition}

Proofs or references to the proofs of assertions appearing in the next theorem
can be found in~\cite[Section~16.2]{Lueck(2022book)}.

\begin{theorem}[Status of the  Full Farrell-Jones Conjecture~\ref{con:The_Full_Farrell-Jones_Conjecture}]%
\label{the:status_of_the_Full_Farrell-Jones_Conjecture}\

\begin{enumerate}

\item\label{the:status_of_the_Full_Farrell-Jones_Conjecture:Classes_of_groups}
The following classes of  groups belong to $\calfj$:
\begin{enumerate}

\item\label{the:status_of_the_Full_Farrell-Jones_Conjecture:Classes_of_groups:hyperbolic_groups}
Hyperbolic groups;

\item\label{the:status_of_the_Full_Farrell-Jones_Conjecture:Classes_of_groups:CAT(0)-groups}
Finite dimensional $\CAT(0)$-groups;

\item\label{the:status_of_the_Full_Farrell-Jones_Conjecture:Classes_of_groups:solvable}
Virtually solvable groups;

\item\label{the:status_of_the_Full_Farrell-Jones_Conjecture:Classes_of_groups:lattices}
  (Not necessarily cocompact) lattices in path connected second countable locally compact
  Hausdorff groups.
  
  More generally, if $L$ is a (not necessarily cocompact) lattice in a second countable
  locally compact Hausdorff group $G$ such that $\pi_0(G)$ is discrete and belongs to
  $\calfj$, then $L$ belongs to $\calfj$;

\item\label{the:status_of_the_Full_Farrell-Jones_Conjecture:Classes_of_groups:pi_low_dimensional}
  Fundamental groups of (not necessarily compact) connected manifolds (possibly with
  boundary) of dimension $\le 3$;

\item\label{the:status_of_the_Full_Farrell-Jones_Conjecture:Classes_of_groups:GL}
  The groups $\GL_n(\IQ)$ and   $\GL_n(F(t))$ for $F(t)$ the function field over a finite field $F$;

\item\label{the:status_of_the_Full_Farrell-Jones_Conjecture:Classes_of_groups:S-arthmetic}
$S$-arithmetic groups;

\item\label{the:status_of_the_Full_Farrell-Jones_Conjecture:Classes_of_groups:mapping_class_groups}
  The mapping class group $\Gamma_{g,r}^s$  of a closed orientable
  surface of genus $g$ with $r$ boundary components and $s$ punctures
  for $g,r,s \ge 0$;

\item\label{the:status_of_the_Full_Farrell-Jones_Conjecture:Classes_of_groups:pi_1(graphs)}
Fundamental groups of  graphs of abelian groups;

\item\label{the:status_of_the_Full_Farrell-Jones_Conjecture:Classes_of_groups:pi_1(graphs_virtually_cyclic)}
Fundamental groups of  graphs of virtually cyclic groups;

\item\label{the:status_of_the_Full_Farrell-Jones_Conjecture:groups_in_calbc:Artin_braid_groups}
Artin's full braid groups $B_n$;

\item\label{the:status_of_the_Full_Farrell-Jones_Conjecture:groups_in_calbc:Coxeter_groups}
  Coxeter groups;

\item\label{the:status_of_the_Full_Farrell-Jones_Conjecture:finite_product_of_hyperbolic_graphs}
Groups  which acts properly and
  cocompactly on a finite product of hyperbolic graphs.

\end{enumerate}

\item\label{the:status_of_the_Full_Farrell-Jones_Conjecture:inheritance}
The class $\calfj$ has the following inheritance properties:

\begin{enumerate}

\item \emph{Passing to subgroups}%
\label{the:status_of_the_Full_Farrell-Jones_Conjecture:inheritance:passing_to_subgroups}\\
Let $H \subseteq G$ be an inclusion of groups. 
If $G$ belongs to $\calfj$, then $H$ belongs to $\calfj$;

\item \emph{Passing to finite direct products}%
\label{the:status_of_the_Full_Farrell-Jones_Conjecture:inheritance:Passing_to_finite_direct_products}\\
If the groups $G_0$ and $G_1$ belong to $\calfj$, then also $G_0 \times G_1$ belongs to $\calfj$;

\item \emph{Group extensions}%
\label{the:status_of_the_Full_Farrell-Jones_Conjecture:inheritance:group_extensions}\\
Let $ 1 \to K \to G \to Q \to 1$ be an extension of groups. Suppose that for any infinite cyclic subgroup
$C \subseteq Q$ the group $p^{-1}(C)$ belongs to $\calfj$ 
and that the groups $K$ and $Q$ belong to $\calfj$.

Then $G$ belongs to $\calfj$;

\item \emph{Group extensions with virtually torsionfree hyperbolic groups as kernel}%
  \label{the:status_of_the_Full_Farrell-Jones_Conjecture:inheritance:hyperbolic_by_infinite_cyclic}\\
  Let $ 1 \to K \to G \to Q \to 1$ be an extension of groups such that $K$ is virtually torsionfree hyperbolic
  and $Q$ belongs to $\calfj$. Then $G$ belongs to $\calfj$;

  \item \emph{Group extensions with countable free groups as kernel}%
    \label{the:status_of_the_Full_Farrell-Jones_Conjecture:inheritance:countable_free_by_infinite_cyclic}\\
    Let $ 1 \to K \to G \to Q \to 1$ be an extension of groups such that $K$ is a
    countable free group (of possibly infinite rank) and $Q$ belongs to $\calfj$. Then $G$
    belongs to $\calfj$;

\item \emph{Colimits over directed systems}%
\label{the:status_of_the_Full_Farrell-Jones_Conjecture:inheritance:directed_colimits}\\
Let $\{G_i \mid i \in I\}$ be a direct system of groups indexed by the directed set $I$
(with arbitrary structure maps). Suppose that for each $i \in I$ the group $G_i$ belongs to $\calfj$.

Then the colimit $\colim_{i \in I} G_i$ belongs to $\calfj$;

\item \emph{Passing to free products}%
\label{the:status_of_the_Full_Farrell-Jones_Conjecture:inheritance:Passing_to_free_products}\\
Consider a collection of groups $\{G_i \mid i \in I\}$ such that $G_i$ belongs $\calfj$ for each $i \in I$.
Then $\ast_{i \in I} G_i$ belongs to $\calfj$;

\item \emph{Passing to overgroups of finite index}%
\label{the:status_of_the_Full_Farrell-Jones_Conjecture:Passing_to_over_groups_of_finite_index}\\
Let $G$ be an overgroup of $H$ with finite index $[G:H]$.
If $H$ belongs to $\calfj$, then $G$ belongs to $\calfj$;

\item \emph{Graph products}%
\label{the:status_of_the_Full_Farrell-Jones_Conjecture:graph_products}\\
A graph product of groups, each of which belongs to $\calfj$, belongs to $\calfj$ again.

\end{enumerate}

\end{enumerate}

\end{theorem}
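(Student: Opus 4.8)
The proof is a synthesis of a large body of work, and the plan is to organise it around a single uniform criterion together with formal inheritance principles. First I would reduce all three assertions of Conjecture~\ref{con:The_Full_Farrell-Jones_Conjecture} --- the $K$-theoretic, $L$-theoretic, and higher-categorical versions with coefficients --- to a theory-independent geometric input. The point is that each assembly map in question is a natural transformation of equivariant homology theories sharing the same formal properties (homotopy invariance, excision, the disjoint union axiom, and a Mayer--Vietoris sequence), so by the ``controlled algebra'' machinery it suffices to show that for every finite group $F$ the wreath product $G \wr F$ is \emph{transfer reducible} over the family $\calvcyc$: it admits, for each $n$, an action on a suitable finite-dimensional contractible space whose bordification carries open covers of controlled covering dimension, with isotropy in $\calvcyc$, that are arbitrarily long in the relevant directions. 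Once such covers are produced, the isomorphism of assembly maps follows identically in $K$-theory, in $L$-theory, and in the $\infty$-categorical setting.

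Next I would dispatch the inheritance properties in part~\ref{the:status_of_the_Full_Farrell-Jones_Conjecture:inheritance}, most of which are formal. Passing to subgroups is restriction of coefficients; passing to finite direct products, to finite-index overgroups, to free products, to graph products, and to directed colimits all follow from the Transitivity Principle (if $\calf$ is contained in $\calg$, the group $G$ satisfies the $\calg$-version, and every member of $\calg$ satisfies the $\calf$-version, then $G$ satisfies the $\calf$-version) combined with the facts that equivariant homology commutes with directed colimits and that $\EGF{G}{\calvcyc}$ for an amalgam or graph product decomposes along a tree or polyhedral complex. The extension statements require the \emph{fibered} form of the conjecture: for $1 \to K \to G \to Q \to 1$ one feeds the fibered statement for $Q$ and the plain statement for the preimages $p^{-1}(C)$ of cyclic $C$ in $Q$ into the Transitivity Principle; the special cases with virtually torsionfree hyperbolic or countable free kernel use in addition that such extensions are themselves transfer reducible, combining the Gromov boundary of the kernel with a model for $\EGF{Q}{\calvcyc}$. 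A routine but essential check throughout is that these constructions survive replacing $G$ by $G \wr F$, which is exactly what makes $\calfj$ closed under the ``finite-by'' operations.

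The heart of the proof is the list of geometric base cases in part~\ref{the:status_of_the_Full_Farrell-Jones_Conjecture:Classes_of_groups}, handled one family at a time. For hyperbolic groups one uses the (Mineyev) flow space on the Rips complex; the contraction estimates of the flow produce the long covers, and this is the prototype. For finite-dimensional $\CAT(0)$-groups one replaces it by the geodesic flow on the $\CAT(0)$ space, where the flow is only weakly contracting, so the covering argument (the Bartels--L\"uck/Wegner analysis) is considerably more delicate. Virtually solvable groups are reduced to polycyclic ones by commensurability and colimit arguments and then handled by induction along the derived series together with a Farrell--Hsiang-type hyperelementary induction exploiting the crystallographic structure of the successive quotients. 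Lattices, $S$-arithmetic groups, $\GL_n(\IQ)$, $\GL_n(F(t))$, and Coxeter groups are attacked through actions on a $\CAT(0)$ space --- a product of the symmetric space with Bruhat--Tits buildings, respectively the Davis complex --- which is non-cocompact, so one glues the $\CAT(0)$ covering estimates on the thick part to a Farrell--Hsiang argument controlling the ends via Borel--Serre reduction theory. Mapping class groups and braid groups use Teichm\"uller space with the Teichm\"uller metric, which is \emph{not} $\CAT(0)$; here one substitutes the coarse hyperbolic geometry coming from subsurface projections (the Masur--Minsky hierarchy and the Behrstock/Bestvina--Bromberg--Fujiwara machinery) to build the required flow-like structure. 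Graphs of abelian or virtually cyclic groups, and graph products, follow by combination theorems from the $\CAT(0)$ and extension cases; groups acting properly cocompactly on a finite product of hyperbolic graphs are handled coordinatewise from the hyperbolic case.

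The main obstacle, in every geometric family, is the construction of the ``long thin'' open covers with $\calvcyc$-isotropy and controlled covering dimension on the appropriate bordification --- the covering hypotheses of the transfer-reducibility criterion. For hyperbolic groups this rests on sharp contraction of the geodesic flow; for $\CAT(0)$-groups on a subtle foliated-control argument because the flow fails to be uniformly contracting; for lattices and $S$-arithmetic groups on the precise interaction between $\CAT(0)$ geometry in the thick part and the failure of cocompactness near the cusps, which forces the full strength of reduction theory to be intertwined with a Farrell--Hsiang induction; for mapping class groups on replacing $\CAT(0)$ geometry by the hierarchically hyperbolic structure. By contrast, once the covering input is secured, the passage to the actual $K$-, $L$-, and $\infty$-categorical statements, and the compatibility with wreath products, is uniform and essentially formal.
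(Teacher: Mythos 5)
The paper itself does not prove this theorem; it refers the reader to~\cite[Sections~16.2 and~16.8]{Lueck(2022book)} and to the list of original papers at the end of Section~\ref{sec:The_Full_Farrell-Jones_Conjecture}, and Section~\ref{sec:Methods_an_strategies_of_proofs} of the survey only sketches the methods in broad strokes (assembly maps, controlled topology, flow spaces, transfers). Your outline is a reasonable synthesis of the literature those citations point to, and it is consistent with the paper's own informal description of the strategy.

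A few places where your account is smoother than the actual state of the art deserve caution. The reduction of all three statements --- the $K$-theoretic, the $L$-theoretic, and the higher-categorical version with coefficients --- to a single transfer-reducibility criterion is not as uniform as you suggest: the $L$-theory version forces the full family $\calvcyc$ rather than $\calvcyc_I$ (the paper makes this point explicitly in Subsection~\ref{subsec:The_role_of_the_family_calvcyc}), and the statement for higher $G$-categories requires the separate work of Bunke--Kasprowski--Winges on localising invariants rather than a verbatim repetition of the $K$-theoretic argument. Your claim that groups acting properly cocompactly on a finite product of hyperbolic graphs are handled ``coordinatewise'' understates what is involved in building a usable coarse flow space on the product. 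Finally, dismissing the compatibility with $G \wr F$ as a ``routine but essential check'' is optimistic: several of the inheritance statements in part~\ref{the:status_of_the_Full_Farrell-Jones_Conjecture:inheritance} (notably overgroups of finite index) hold for $\calfj$ precisely because wreath products are built into the definition of the Full Farrell-Jones Conjecture, and establishing the interplay between wreath products and the geometric inputs is a genuine part of the work, not a formal afterthought.
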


A discussion which classes of groups belong to $\calfj$ is given  in~\cite[Section~16.8]{Lueck(2022book)}, where also references to the
literature can be found. At least
we mention the following small selection of important
papers~\cite{Bartels-Bestvina(2019),Bartels-Echterhoff-Lueck(2008colim),  Bartels-Farrell-Lueck(2014), Bartels-Lueck(2012annals),
  Bartels-Lueck-Reich(2008hyper), Bartels-Lueck-Reich-Rueping(2014), Bunke-Kasprowski-Winges(2021),
  Farrell-Jones(1986a), Farrell-Jones(1987), Farrell-Jones(1993a),  Kammeyer-Lueck-Rueping(2016), Wegner(2012), Wegner(2015)}.

%%%%%%%%%%%%%%%%%%%%%%%%%%%%%%%%%%%%%%%%%%%%%%%%%%%%%%%%%%%%%%%%%%%%%
%%%%%%%%%%%% Section 9: Further applications of the Full Farrell-Jones Conjecture %%%%%%%%%%
%%%%%%%%%%%%%%%%%%%%%%%%%%%%%%%%%%%%%%%%%%%%%%%%%%%%%%%%%%%%%%%%%%%%%

\typeout{---------- Section 9:  Further applications of the Full Farrell-Jones Conjecture    ---------------}

\section{Further applications of the Full Farrell-Jones Conjecture}%
\label{sec:Further_applications_of_the_Full_Farrell-Jones_Conjecture}

The are further prominent applications of the Full Farrell-Jones Conjecture~\ref{con:The_Full_Farrell-Jones_Conjecture}
which we do not treat in this article.  Key words are:

\begin{itemize}

\item The Bass Conjectures;

\item The Novikov Conjecture;

\item Non-existence of positive scalar curvature metrics on aspherical closed manifolds;

\item Whitehead spaces and pseudoisotopy spaces;

\item Fibering manifolds;

\item Homotopy invariance of the Hirzebruch-type invariant $\tau^{(2)}(M)$;

\item Homotopy invariance of the (twisted) $L^2$-torsion;

\item Classification of certain classes of manifolds.

\item Vanishing of $\kappa$-classes for aspherical closed manifolds.

\end{itemize}

More information including references to the literature about these topics can be found
in~\cite[Section~13.12]{Lueck(2022book)}.

%%%%%%%%%%%%%%%%%%%%%%%%%%%%%%%%%%%%%%%%%%%%%%%%%%%%%%%%%%%%%%%%%%%%%
%%%%%%%%%%%%%%%%%% Section 10: Methods and strategies of proofs %%%%% %%%%%%%%%%%%%%
%%%%%%%%%%%%%%%%%%%%%%%%%%%%%%%%%%%%%%%%%%%%%%%%%%%%%%%%%%%%%%%%%%%%%

\typeout{---------- Section 10:  Methods and strategies of proofs    ---------------}

\section{Methods and strategies of proofs}%
\label{sec:Methods_an_strategies_of_proofs}

We very briefly give some informations about the methods and strategies of the proofs of
the Full Farrell-Jones Conjecture~\ref{con:The_Full_Farrell-Jones_Conjecture}.  The key
words are \emph{assembly maps}, \emph{controlled topology}, \emph{flow spaces}, and
\emph{transfers}. More details of the strategies of proofs can be found in the survey
article by Bartels~\cite{Bartels(2016)} or in~\cite[Chapter~19]{Lueck(2022book)}.

It is rather amazing that the proof of the Full Farrell-Jones Conjecture, whose statement
is of homotopy theoretic nature,  uses input from geometric group theory and
dynamical systems. This becomes more puzzling if one observes that the Idempotent
Conjecture~\ref{con:Idempotent_Conjecture} is a purely ring theoretic statement and one
would expect that a proof of it uses only algebraic methods, whereas it turns out that its
proof for prominent classes of groups such a hyperbolic groups is carried out by proving
the $K$-theoretic part of the Full Farrell-Jones
Conjecture~\ref{con:The_Full_Farrell-Jones_Conjecture} and not by internal algebraic methods.

% -----------------------------------------------------------------------------

\subsection{Assembly maps}\label{subsec:asembly_maps}

Probably the basic idea of assembly maps goes back to Quinn~\cite{Quinn(1971),Quinn(1995a)} and
Loday~\cite{Loday(1976)} leading for instance to the $K$-theoretic assembly map
$H_n(BG;\bfK_R) \to K_n(RG)$. The $L$-theory version and a more algebraic version
was introduced by Ranicki~\cite{Ranicki(1992)}. 
The basic and uniform approach to assembly as it appears in Section~\ref{sec:The_Full_Farrell-Jones_Conjecture}
is sometimes called the Davis-L\"uck approach and was developed in~\cite{Davis-Lueck(1998)}.

Consider a covariant functor $\bfE \colon \Or(G) \to \SPECTRA$, where $\Or (G)$ is the
orbit category.  It yields a $G$-homology theory $\calh_*(-;\bfE)$ satisfying
$H_n(G/H;\bfE) = \pi_n(\bfE(G/H))$ for every subgroup $H \subseteq G$ and $n \in \IZ$.
The projection $\EGF{G}{\calf} \to G/G$ induces the assembly map
\begin{equation}
H_n(\EGF{G}{\calf};\bfE) \to H_n(G/G; \bfE) = \pi_n(\bfE(G/G)),
  \label{assembly_map_for_bfE}
\end{equation}
and the various assembly maps appearing in
Section~\ref{sec:The_Full_Farrell-Jones_Conjecture} come from specific choices for $\bfE$.

Let $\OrGF{G}{\calf}$ be $\calf$-restricted orbit category, i.e., the full subcategory of
$\Or(G)$ consisting of objects $G/H$ with $H \in \calf$. Then the assembly
map~\eqref{assembly_map_for_bfE} can be identified with the map
\begin{equation}
\pi_n(\bfp) \colon \pi_n\left(\hocolim_{\OrGF{G}{\calf}} \bfE \right) \to \pi_n(\bfE(G/G))
\label{assembly_homotopy_colimit}
\end{equation}
where the map of spectra 
\[
\bfp \colon \hocolim_{\OrGF{G}{\calf}} \bfE \to \hocolim_{\Or(G)} \bfE = \bfE(G/G)
\]
comes from the inclusion of categories $\OrGF{G}{\calf} \to \Or(G)$ and the fact that
$G/G$ is a terminal object in $\Or(G)$. For more information about homotopy colimits and
the identification of the maps~\eqref{assembly_map_for_bfE} and~\eqref{assembly_homotopy_colimit}, we refer
to~\cite[Sections~3 and~5]{Davis-Lueck(1998)}. 

This interpretation is one explanation for the name \emph{assembly}. If the assembly
map~\eqref{assembly_homotopy_colimit} is bijective for all $n \in \IZ$, or, equivalently,
the map $\bfp$ above is  a weak homotopy equivalence, we have a recipe to assemble
$\bfE(G/G)$ from its values $\bfE(G/H)$, where $H$ runs through $\calf$.  The idea is that
$\calf$ consists of well-understood subgroups, for which one knows the values $\bfE(G/H)$
for $H \subseteq G$ and hence $\hocolim_{\OrGF{G}{\calf}} \bfE$, whereas $\bfE(G/G)$ is
the object, which one wants to understand and is very hard to access.

One can describe the assembly maps also by a universal property. Roughly speaking
it is the best approximation of the functor $G/H \to \pi_n(\bfE(G/H))$ by a $G$-homology theory from the left,
see~\cite[Section~6]{Davis-Lueck(1998)}.

More information about assembly can be found for instance
in~\cite{Lueck(2019handbook)} and~\cite[Chapter~18]{Lueck(2022book)}.

% -----------------------------------------------------------------------------

\subsection{Controlled topology}\label{subsec:Controlled_topology}

Let $M_0$ and $N$ be closed topological manifold of dimension $n \ge 5$ equipped with a metric
  generating the given topology.  Then there exists $\epsilon > 0$ with the following
  properties:

  \begin{itemize}
  \item Let $M$ be a closed manifold and $f \colon M \to N$ be a homotopy equivalence which is
    \emph{$\epsilon$-controlled} in the sense  that there exists a map
    $g \colon  N \to M$ and homotopies $h \colon g \circ f \simeq \id_M$ and $k \colon f \circ g \simeq \id_N$
    such that for every $x \in M$ the diameter of $f(h(\{x\} \times I))$ and for every $y \in N$ the diameter of
    $k(\{y\}  \times I)$ is bounded by $\epsilon$.

    Then by the $\alpha$-Approximation
  due to Chapman and Ferry~\cite{Chapman-Ferry(1979)} $f$ is homotopic to a homeomorphism and in
  particular has trivial Whitehead torsion. So in order to prove that $N$ is topological
  rigid in the sense of Definition~\ref{def:topological_rigid}, it suffices to show that a
  given homotopy equivalence $g \colon M \to N$ is homotopic to an $\epsilon$-controlled
  homotopy equivalence.  Roughly speaking, to achieve up to homotopy a homeomorphism, it
  suffices to gain $\epsilon$-control.

  \item An $h$-cobordism $(W;M_0,M_1,f_0,f_1)$ over
    $M_0$ is trivial and hence has vanishing Whitehead torsion if we can show that it
    is $\epsilon$-controlled. This follows from the Thin $h$-Cobordism Theorem of
     Quinn~\cite[Theorem~2.7]{Quinn(1979a)}.
    In particular in order to show that $\Wh(\pi_1(N))$
    vanishes, it suffices to show because of  the $s$-Cobordism
    Theorem~\ref{the:s-Cobordism_Theorem} that, for any $h$-cobordism
    $(W;M_0,M_1,f_0,f_1)$ over $M_0$,  there is an $h$-cobordism
    $(W';M_0,M'_1,f'_0,f'_1)$ over $M_0$ such that $(W;M_0,M_1,f_0,f_1)$ and
    $(W';M_0,M'_1,f'_0,f'_1)$ have the same Whitehead torsion and 
    $(W';M_0,M'_1,f'_0,f'_1)$ is $\epsilon$-controlled;
  \end{itemize}

  Hence to prove for a finitely presented torsionfree group 
  Conjecture~\ref{con:FHC_Farrell-Jones_Conjecture_for_Wh(G)_for_torsionfree_Groups}
   which predicts the vanishing of $\Wh(G)$, or the Borel
  Conjecture~\ref{con:Borel_Conjecture}, which predicts the topological rigidity
  of an aspherical closed manifold with fundamental group $G$, a promising strategy is to
  gain control, i.e., turning an $h$-cobordism or a homotopy equivalence to an
  $\epsilon$-controlled one without changing the  class associated to it in the Whitehead group.

  This turns out to be a major breakthrough since it allows to bring in completely new methods, namely,
  geometric methods, into the play.  Note  that the  pioneering work introducing control ideas into assembly
  was done by Farrell and Hsiang in several papers, including~\cite{Farrell-Hsiang(1970),Farrell-Hsiang(1978b)}.
  These ideas were then extensively developed by Farrell and Jones,  in
  particular in their seminal papers~\cite{Farrell-Jones(1986a),Farrell-Jones(1987)}.

  \begin{remark}[Control-Strategy]\label{rem:Control-Strategy}
  The considerations above lead to the  following
  \emph{Control-Strategy}\index{strategy!Control-Strategy}\index{control!Control-Strategy}
  for proving the Farrell-Jones Conjecture:

  \begin{enumerate}
  \item\label{Control-Strategy:Interpretation_of_the_source} Interprete elements in the
    target group $K_n(\IZ G)$ of the assembly map as a kind of cycles and the source of
    the assembly map $H_n^G(\EGF{G}{\calvcyc};\bfK_R)$ as \emph{controlled cycles}, i.e.,
    cycles satisfying certain control conditions related to the family $\calvcyc$;
    
  \item\label{Control-Strategy:Interpretation_of_the_assembly_map} Identify the assembly
    map as a kind of \emph{forget control map};

  \item\label{Control-Strategy::surjectivity} For a specific group $G$ and the 
    family $\calvcyc$, develop a strategy how to change a cycle without changing its class
    in $K_n(\IZ G)$ such that the new representative satisfies the necessary control
    conditions to ensure that the it defines an element in
    $H_n^G(\EGF{G}{\calvcyc};\bfK_R)$. This proves surjectivity of the assembly map. One may
    call this process \emph{gaining control};\index{control!gaining
      control}\index{Strategy!!gaining control}

  \item\label{Control-Strategy::injectivity} Use a relative version of
    part~\ref{Control-Strategy::surjectivity} to prove injectivity of the assembly
    map. One may call this process \emph{gaining relative control}.
  \end{enumerate}
  
  The strategy for $L$-theory is completely analogous.

\end{remark}

\begin{example}[Singular homology]\label{exa:singular_homology}
  Next we illustrate this strategy in a much easier and classical instance, namely,
  singular homology, by repeating how one proves excision for it.

  Let $X$ be a topological space, and let
  $C_*^{\operatorname{sing}}(X;R)$ be the singular
  chain complex of $X$ with coefficients in the ring $R$.  Let
  $\calu = \{U_i \mid i \in I\}$ be a cover of $X$, i.e., a collection of subsets $U_i$
  such that the union of their interiors $U_i^{\circ}$ is $X$. Denote by $S_n^{\calu}(X)$
  the subset of the set $S_n(X)$ of those singular $n$-simplices
  $\sigma \colon \Delta_n \to X$ for which there exists $i \in I$ satisfying
  $\im(\sigma) \subseteq U_i$.  Let $C_*^{\operatorname{sing},\calu}(X;R)$ be the $R$
  subchain complex of $C_*^{\operatorname{sing}}(X;R)$ whose $n$th chain module consists
  of elements of the shape $\sum_{\sigma \in S^{\calu}_n(X)} r_{\sigma} \cdot \sigma$. Let
  $i _*^{\calu} \colon C_*^{\operatorname{sing},\calu}(X;R) \to
  C_*^{\operatorname{sing}}(X;R)$ be the inclusion. The main ingredient in the proof of
  excision is to show that $i_*$ is a homology equivalence.  Then excision follows by
  applying the result above to $\calu = \{X \setminus A, B\}$ for
  $A \subseteq B \subseteq X$ with $\overline{A} \subseteq B^{\circ}$.

  The proof that
  $i _*^{\calu} \colon C_*^{\operatorname{sing},\calu}(X;R) \to
  C_*^{\operatorname{sing}}(X;R)$ is a homology equivalence is based on the construction
  of the subdivision operator which subdivides $\Delta_n$ into a bunch of smaller copies
  of $\Delta_n$ and replaces the singular simplex $\sigma \colon \Delta_n \to X$ by the
  sum of the singular simplices obtained by restricting to these smaller pieces.  This
  process does not change the homology class but can be used to arrange that the
  representing cycle lies in $C_*^{\operatorname{sing},\calu}(X;R)$. This implies
  surjectivity of
  $H_n(i _*^{\calu}) \colon H_n(C_*^{\operatorname{sing},\calu}(X;R)) \to
  H_n(C_*^{\operatorname{sing}}(X;R))$.  One obtains injectivity by applying these
  construction to an $(n+1)$-simplex $\tau \colon \Delta_{n+1} \to X$, provided that the
  restriction of $\tau$ to faces of $\Delta_{n+1}$ does already lie in $S_n^{\calu}(X)$.

  Roughly speaking, the process of gaining control is realized by subdivision.
\end{example}

For more information about controlled topology we refer for instance to~\cite[Section~19.4
and Chapter~21]{Lueck(2022book)}.

% -----------------------------------------------------------------------------

\subsection{Flow spaces}\label{subsec:Flow_spaces}

To gain control, flow spaces come into play. Roughly speaking, one considers so called
\emph{geometric modules} over a metric space $X$ which is essentially a collection of
finitely generated free $R$-modules $\{M_x \mid x \in X\}$ which is locally finite, i.e.,
for $y \in X$ there exists an open neighborhood $U$ of $y$ in $X$ such that its support
$\supp(M) = \{x \in X \mid M_x \not= \{0\}\} \cap U$ is finite.  Very roughly speaking,
if $X$ comes with an appropriate flow,
one can apply the flow to arrange that the support has arbitrary
small diameter.  Thus one gains control.  Since the flow is continuous this does not change
the class represented in the $K$- or $L$-theory by such a geometric module (or quadratic form over it)
or by an
automorphisms of such a geometric module (or quadratic form over it).

The appropriate flow spaces come from the geometry of the group $G$ under consideration.
Suppose that $G$ is the fundamental group of a hyperbolic closed manifold $M$ of dimension
$d$.  Then we take $X = \widetilde{M}$ which is just the hyperbolic space $\IH^d$.  The
basic idea is that for two points $x_0$ and $x_1$ in $\IH^d$ and a point $z$ on the
boundary $\partial \IH^d$ there exists $\tau \in \IR$ such that
$\lim_{t \to \infty} d(\Phi^z_{t}(x_0), \Phi^z_{t + \tau}(x_1)) = 0$, if $\Phi^z_{t}(x_0)$
denotes the point obtained by moving $x_0$ to $z$ along the geodesic ray starting at $x_0$
to $z$ for the time $t$.

For a survey on the construction of the appropriate flow spaces for certain classes of groups
we refer for instance to~\cite[Chapter~22]{Lueck(2022book)}.

% -----------------------------------------------------------------------------

\subsection{Transfers}\label{subsec:Transfers}

We at least describe how transfer maps occurred in the work Farrell-Jones.  Consider a
closed hyperbolic manifold $M$ of dimension $d \ge 5$.  We want to show
$\Wh(\pi_1(M)) = 0$.  There is no flow on $M$ but there is the geodesic flow on the sphere
tangent bundle $STM$.  There is the  transfer map
\[
  p^* \colon \Wh(\pi_1(M)) \to \Wh(\pi_1(STM)).
\]
It sends the Whitehead torsion of an
$h$-cobordism $(W;M,M',f,f')$ over $M$ to the Whitehead torsion of the $h$-cobordism over
$STM$ obtain by the pullback construction associated to the projection
$p \colon STM \to M$ and an appropriate retraction $r \colon W \to M$.  The flow on $STM$
allows to show that $\Wh(\pi_1(STM))$ vanishes. The projection $p$ induces an isomorphisms
$p_* \colon \Wh(\pi_1(STM)) \to \Wh(\pi_1(M))$ such that
$p_* \circ p^* \colon \Wh(\pi_1(M)) \to \Wh(\pi_1(M))$ is multiplication with the Euler
characteristic of the fiber of $p$ which is $1 - (-1)^{d}$. If $d$ is odd, we conclude
that $p_* \circ p^* = 2 \cdot \id_{\Wh(\pi_1(M))}$ and hence each element in $\Wh(\pi_1(M))$
  has order at most $2$. Farrell-Jones replace $STM$ by a  certain semisphere bundle
  $q \colon S^+TM \to M$. Its fiber is the upper hemisphere which is contractible and has
  therefore Euler characteristic $1$.  This implies $q_* \circ q^* =
  \id_{\Wh(\pi_1(M))}$. Since one can still show that $\Wh(\pi_1(S^+TM))$ vanishes using
    the geodesic flow and $q_* \colon \Wh(\pi_1(S^+TM)) \to \Wh(\pi_1(M))$ is bijective, we
    get $\Wh(\pi_1(M)) = \{0\}$.

    For more information about transfer methods we refer for instance to~\cite[Chapter~23]{Lueck(2022book)}.

% -----------------------------------------------------------------------------

    \subsection{The role of the family $\calvcyc$}\label{subsec:The_role_of_the_family_calvcyc}

    Consider an appropriate flow space $X$ with an isometric proper $G$-action. Let
    $c \colon \IR \to X$  be a geodesic.  Then the subgroup
    $G_c = \{g \in G \mid g \cdot \im(c) = \im(c)\}$ is an extension of the shape
    $1 \to \IZ \to G_c \to G_{c(0)} \to 1$ where the finite group $G_{c(0)}$ is the
  isotropy group of $c(0)$ in $X$. Groups which contain a normal infinite cyclic subgroup
  of finite index are called virtually cyclic of type I. If $\calvcyc_I$ is the class of
  virtually cyclic subgroups of type I of $G$, then one gets the bijectivity of the
  assembly map
  \[
H_n^G(\pr;\bfK_{\cala})  \colon H^G_n(\EGF{G}{\calvcyc_I};\bfK_{\cala}) 
\to H_n^G(G/G;\bfK_{\cala}) =  K_n(\cala[G]),
\]
if the strategy of proof described above works out. This is consistent with
Conjecture~\ref{con:K-theoretic_Farrell-Jones_Conjecture_with_coefficients_in_additive_G-categories},
since one can show for every group $G$, every additive $G$-category $\cala$, and every
$n \in \IZ$ that the relative assembly map
\[
H_n^G(\pr;\bfK_{\cala})  \colon H^G_n(\EGF{G}{\calvcyc_I};\bfK_{\cala}) 
\to H_n^G(\EGF{G}{\calvcyc};\bfK_{\cala})
\]
is bijective.

This reduction from $\calvcyc$ to $\calvcyc_I$ does not holds for $L$-theory, since in the
proof for $L$-theory one has to enlarge the family $\calvcyc_I$ to the family
$\{H \subseteq G \mid \exists H' \subseteq H \; \text{with}\; [H:H'] \le 2, H \in
\calvcyc_I\}$ which is precisely $\calvcyc$. A general discussion when one can make the
family $\calvcyc$ smaller, can be found for instance
in~\cite[Section~13.10]{Lueck(2022book)}.

The family $\calvcyc$ seems to be the smallest family for which
the Full Farrell-Jones Conjecture~\ref{con:The_Full_Farrell-Jones_Conjecture}
might be true in general.

%%%%%%%%%%%%%%%%%%%%%%%%%%%%%%%%%%%%%%%%%%%%%%%%%%%%%%%%%%%%%%%%%%%%%
%%%%%%%%%%%% Section 11: Can the Full Farrell-Jones Conjecture be true for all groups? %%%%%%%%%%
%%%%%%%%%%%%%%%%%%%%%%%%%%%%%%%%%%%%%%%%%%%%%%%%%%%%%%%%%%%%%%%%%%%%%

\typeout{---------- Section 11:  Can the Full Farrell-Jones Conjecture be true for all groups?    ---------------}

\section{Can the Full Farrell-Jones Conjecture be true for all groups?}%
\label{sec:Can_the_Full_Farrell-Jones_Conjecture_be_true_for_all_groups?}

We are not aware of any group for which the Full Farrell-Jones
Conjecture~\ref{con:The_Full_Farrell-Jones_Conjecture} is known to be false.

Here is a list of interesting groups for which  the Full Farrell-Jones Conjecture~\ref{con:The_Full_Farrell-Jones_Conjecture}
is open in general at the time of writing:

\begin{itemize}

\item elementary amenable, amenable, or a-T-menable groups;
\item $\Out(F_n)$ for $n \ge 3$;
\item Artin groups; 
\item Thompson's groups $F$, $V$, and $T$;
\item Torsionfree one-relator groups;
\item Linear groups;
\item Subgroups of almost connected Lie groups;
\item Residually finite groups;
\item (Bi-)Automatic groups;
\item Locally indicable groups.
\end{itemize}

It is hard to believe that the Full Farrell-Jones
Conjecture~\ref{con:The_Full_Farrell-Jones_Conjecture} is true for all groups, since there
have been so many prominent conjectures about groups which were open for some time and
for which finally counterexamples were found. On the other hand, the conjecture is known
for so many groups so that we currently have no strategy to find counterexamples.
There are many exotic properties of groups, where one may hope  that they rule out the
Full Farrell-Jones Conjecture~\ref{con:The_Full_Farrell-Jones_Conjecture}, e.g.,
groups with expanders, but for all these properties one knows  examples of groups which possess
this property and which do satisfy the Full Farrell-Jones
Conjecture~\ref{con:The_Full_Farrell-Jones_Conjecture}. One reason is
that often constructions of group with exotics properties are carried out as colimits of hyperbolic groups but
these do satisfy the Full Farrell-Jones
Conjecture~\ref{con:The_Full_Farrell-Jones_Conjecture} because of
Theorem~\ref{the:status_of_the_Full_Farrell-Jones_Conjecture}~%
\ref{the:status_of_the_Full_Farrell-Jones_Conjecture:Classes_of_groups:hyperbolic_groups}
and~\ref{the:status_of_the_Full_Farrell-Jones_Conjecture:inheritance:directed_colimits}.

Actually, there is a non-trivial result which holds for all groups and is a consequence of the 
Full Farrell-Jones
Conjecture~\ref{con:The_Full_Farrell-Jones_Conjecture}.
Let $i\colon H \to G$ be the inclusion of a normal subgroup
$H \subset G$.  It induces a homomorphism $i_0\colon \Wh(H) \to \Wh(G)$.  The conjugation
actions of $G$ on $H$ and on $G$ induce $G$-actions on $\Wh(H)$ and on $\Wh(G)$ which
turns out to be trivial on $\Wh(G)$. Hence $i_0$ induces homomorphisms
\begin{eqnarray}
i_1\colon  \IZ \otimes_{\IZ G} \Wh(H) & \to & \Wh(G);
\label{Wh(H)_to_Wh(G):i_1}
\\
i_2\colon  \Wh(H)^G  & \to & \Wh(G).
\label{Wh(H)_to_Wh(G):i_2}
\end{eqnarray}

\begin{theorem}\label{the:Wh(H)_to_Wh(G)}
Let $i\colon  H \to G$ be the inclusion of a normal finite subgroup $H$
into an arbitrary group $G$. Then the maps
$i_1$ and $i_2$ defined in~\eqref{Wh(H)_to_Wh(G):i_1} and~\eqref{Wh(H)_to_Wh(G):i_2}
have finite kernels.
\end{theorem}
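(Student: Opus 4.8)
The plan is to separate the one genuinely non-formal ingredient from the surrounding homological bookkeeping, which is entirely about a finite group acting on a finitely generated abelian group.

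\emph{Step 1: reduction of the group action.} Since $H$ is finite, $\aut(H)$ is finite, and the conjugation action of $G$ on $\Wh(H)$ is the composite $G \to \aut(H) \to \aut(\Wh(H))$; write $\overline{G}$ for the (finite) image of $G$ in $\aut(H)$. By Bass's theorem $\Wh(H)$ is a finitely generated abelian group. Hence $\IQ\overline{G}$ is semisimple, $\IQ\otimes_{\IZ}\Wh(H)$ splits $\overline{G}$-equivariantly as $(\IQ\otimes_{\IZ}\Wh(H))^{\overline{G}}$ together with the sum of its nontrivial isotypical summands, and the obvious maps relating $\Wh(H)^{G}=\Wh(H)^{\overline{G}}$, $\Wh(H)$, and $\IZ\otimes_{\IZ G}\Wh(H)$ (the $\overline{G}$-coinvariants) all have finite kernel and cokernel. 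Therefore $i_1$ has finite kernel $\iff$ $i_2$ has finite kernel $\iff$ the restriction of $i_0\colon \Wh(H)\to\Wh(G)$ to $\Wh(H)^{G}$ has finite kernel $\iff$ the rational map $\IQ\otimes i_0$ is injective on the subspace $(\IQ\otimes_{\IZ}\Wh(H))^{\overline{G}}$. So everything reduces to this single rational detection statement.

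\emph{Step 2: identification with an assembly map.} The natural home for $\IQ\otimes i_0$ restricted to the $\overline{G}$-invariants is the rationalized $K$-theoretic assembly map for the family $\calfin$ of finite subgroups. The orbit $G/H$ of $\EGF{G}{\calfin}$ contributes $K_1(\IZ H)$ to $H_1^{G}(\EGF{G}{\calfin};\bfK_{\IZ})$, and via the equivariant Chern character the rationalization of $H_1^{G}(\EGF{G}{\calfin};\bfK_{\IZ})$ has $(\IQ\otimes_{\IZ}\Wh(H))^{\overline{G}}$ as a direct summand, carried by the assembly map into $\IQ\otimes\Wh(G)\subseteq\IQ\otimes K_1(\IZ G)$ by $\IQ\otimes i_0$. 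Thus the theorem follows once one knows that the $K$-theoretic $\calfin$-assembly map for $G$ is rationally injective. If $G$ is a Farrell-Jones group this is part of Conjecture~\ref{con:The_Full_Farrell-Jones_Conjecture} (the $\calvcyc$-assembly map is an isomorphism, and the relative term from $\calfin$ to $\calvcyc$ splits off), which is exactly why the theorem is a shadow of the conjecture; but the rational injectivity of the $K$-theoretic $\calfin$-assembly map holds for every group, for instance by the cyclotomic-trace and B\"okstedt--Hsiang--Madsen-type arguments, and this is what makes Theorem~\ref{the:Wh(H)_to_Wh(G)} unconditional.

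\emph{Main obstacle.} The only step that is not formal is providing this rational detection of $\Wh(H)^{G}$ inside $\Wh(G)$ unconditionally, i.e.\ without assuming the Farrell-Jones Conjecture for $G$. One simplification I would use is to pass first to the finite-index normal subgroup $G_0 = H\cdot C_G(H)$: on $\Wh(H)$ the group $G_0$ acts only through inner automorphisms of $H$, hence trivially, so over $G_0$ one merely has to detect $\Wh(H)$ itself in $\Wh(G_0)$, and a transfer along the finite-index inclusion $G_0\subseteq G$ (together with the already-noted finiteness of the torsion subgroup of $\Wh(H)$) transports the conclusion back to $G$ at the cost of a finite kernel. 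It is worth recording two test cases that confirm the precise shape of the statement: if $H$ is a retract of $G$ there is a ring retraction $\IZ G\to\IZ H$ and $i_0$ is split injective; while if $G=H\rtimes_\alpha\IZ$ then, by the twisted Bass--Heller--Swan decomposition, $i_0$ is the projection $\Wh(H)\twoheadrightarrow\coker(1-\alpha_*)$, so $\ker(i_0)=\operatorname{im}(1-\alpha_*)$ can be infinite, yet $\Wh(H)^{G}=\ker(1-\alpha_*)$ meets it only in a finite group --- it is genuinely the invariant part that gets detected.
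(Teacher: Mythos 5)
Your Step 1 is correct, but Step 2 contains a genuine gap. You assert that rational injectivity of the $K$-theoretic $\calfin$-assembly map ``holds for every group, for instance by the cyclotomic-trace and B\"okstedt--Hsiang--Madsen-type arguments,'' and that this is what makes the theorem unconditional. That is not true. If the assembly map $H_n^G(\EGF{G}{\calfin};\bfK_{\IZ}) \to K_n(\IZ G)$ were rationally injective for every $G$, then restricting to the trivial-family summand of its rationalized source would give the rational $K$-theoretic Novikov conjecture for every group, which is open. The trace-method theorems you invoke all carry hypotheses: B\"okstedt--Hsiang--Madsen needs each $H_n(BG;\IZ)$ finitely generated, and the refinement of L\"uck--Reich--Rognes--Varisco to the family of finite (cyclic) subgroups needs finite generation of $H_n(BZ_GC;\IZ)$ for each finite cyclic $C \leq G$ together with number-theoretic input of Schneider/Leopoldt type. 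None of this is available for an arbitrary $G$, and your reduction to $G_0 = H \cdot C_G(H)$ does not help, since $G_0$ is still arbitrary up to finite index. So your argument yields only a conditional version of what is an unconditional theorem. A secondary inaccuracy: the Chern-character description of $H_1^G(\EGF{G}{\calfin};\bfK_{\IZ}) \otimes \IQ$ attaches to the conjugacy class of $H$ a cokernel of induction maps from proper subgroups of $H$, not $(\IQ \otimes \Wh(H))^{\overline{G}}$ on the nose; this is repairable (the map $K_1(\IZ H)\to H_1^G(\EGF{G}{\calfin};\bfK_{\IZ})$ does factor through the coinvariants), but it deserves mention. The missing injectivity is the essential gap.

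For comparison, the paper gives no argument of its own and simply cites~\cite[Theorem~9.38 on page~354]{Lueck(2002)}. That proof is unconditional precisely because it does not pass through assembly maps or trace methods at all: it works directly with the structure of $K_1$ of the integral group ring of the \emph{finite} group $H$ --- finiteness of $SK_1(\IZ H)$, the Wedderburn decomposition of $\IQ H$, and the way conjugation by $G/H$ permutes its simple factors --- to detect the torsion-free part of $\Wh(H)^G$ inside $\Wh(G)$ by hand. Your two test cases and the $G_0$-reduction are sound observations about the shape of the statement, but they do not close the detection step.
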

\begin{proof}
See~\cite[Theorem~9.38 on page~354]{Lueck(2002)}.
\end{proof}

Furthermore, Yu~\cite[Theorem~1.1]{Yu(2017)}, see also
Cortinas-Tartaglia~\cite{Cortinas-Tartaglia(2014)}, proved  that the
$K$-theoretic assembly map
\[
  H_n^G(\EGF{G}{\calvcyc};\bfK_{\cals}) \to K_n(\cals G)
\]
is rationally injective for every group $G$, where $\cals$ is the ring
of Schatten class operators of an infinite dimensional separable Hilbert space.

\begin{remark}[Universal finitely presented groups]\label{rem:Universal_finitely_presented_groups}
  The following observation is amusing  that
  the  Full Farrell-Jones
  Conjecture~\ref{con:The_Full_Farrell-Jones_Conjecture} holds for all groups
  if it holds for one specific finitely presented group $U$.
  
  Namely, let $U$ be a group that  is universal finitely presented, i.e., any
  finitely presented group is isomorphic to a subgroup of $G$.  Such a group exists by
  Higman~\cite[page~456]{Higman(1961)}, and there is even a universal finitely presented
  group which is the fundamental group of a
  complement of an embedded $S^3$ in $S^5$,
  see~\cite[Corollary~3.4]{Gonzalez-Acuna-Gordon-Simon(2010)}.

  Suppose  that $U$ satisfies the  Full Farrell-Jones
  Conjecture~\ref{con:The_Full_Farrell-Jones_Conjecture}. Let $G$ be any  group.
It is the directed  union of its finitely generated subgroups. 
Hence by Theorem~\ref{the:status_of_the_Full_Farrell-Jones_Conjecture}~%
\eqref{the:status_of_the_Full_Farrell-Jones_Conjecture:inheritance:directed_colimits}
the Full Farrell-Jones Conjecture~\ref{con:The_Full_Farrell-Jones_Conjecture}
holds for all groups if and only if it holds for all finitely generated groups.
Any finitely generated group can be written  as a directed colimit of finitely presented groups. 
Hence by Theorem~\ref{the:status_of_the_Full_Farrell-Jones_Conjecture}~%
\eqref{the:status_of_the_Full_Farrell-Jones_Conjecture:inheritance:directed_colimits}
the Full Farrell-Jones Conjecture~\ref{con:The_Full_Farrell-Jones_Conjecture}
holds for all finitely generated groups if and only if holds for all finitely presented  groups.
Now the claim follow from Theorem~\ref{the:status_of_the_Full_Farrell-Jones_Conjecture}~%
\eqref{the:status_of_the_Full_Farrell-Jones_Conjecture:inheritance:passing_to_subgroups},
since every finitely presented group is a subgroup of $U$.
\end{remark}

%%%%%%%%%%%%%%%%%%%%%%%%%%%%%%%%%%%%%%%%%%%%%%%%%%%%%%%%%%%%%%%%%%%% 
%%%%%%%%%%%%%%%%%%%%%%%%%%%% Reference  %%%%%%%%%%%%%%%%%%%%%%%%%%%%%%%%
%%%%%%%%%%%%%%%%%%%%%%%%%%%%%%%%%%%%%%%%%%%%%%%%%%%%%%%%%%%%%%%%%%%%

\typeout{----------------------------- References ------------------------------}

\addcontentsline{toc<<}{section}{References} 
% \bibliographystyle{abbrv}
% \bibliography{dbpub,dbpre}

%\version{14.07.2025 (Wolfgang)}

\end{document}